  \let\fiverm\fivrm
\def\@picture(#1,#2)(#3,#4){%
  \@picht #2\unitlength
  \setbox\@picbox\hbox to #1\unitlength\bgroup 
  \let\endpicture=\!latexendpicture
  \let\frame=\!latexframe
  \let\linethickness=\!latexlinethickness
  \let\multiput=\!latexmultiput
  \let\put=\!latexput
  \hskip -#3\unitlength \lower #4\unitlength \hbox\bgroup}
\def\PiC{P\kern-.12em\lower.5ex\hbox{I}\kern-.075emC}
\def\PiCTeX{\PiC\kern-.11em\TeX}
\def\!ifnextchar#1#2#3{%
  \let\!testchar=#1%
  \def\!first{#2}%
  \def\!second{#3}%
  \futurelet\!nextchar\!testnext}
\def\!testnext{%
  \ifx \!nextchar \!spacetoken 
    \let\!next=\!skipspacetestagain
  \else
    \ifx \!nextchar \!testchar
      \let\!next=\!first
    \else 
      \let\!next=\!second 
    \fi 
  \fi
  \!next}
\def\\{\!skipspacetestagain} 
\def\\ {\futurelet\!nextchar\!testnext} 
\def\\{\let\!spacetoken= } \\  
\def\!tfor#1:=#2\do#3{%
  \edef\!fortemp{#2}%
  \ifx\!fortemp\!empty 
    \else
    \!tforloop#2\!nil\!nil\!!#1{#3}%
  \fi}
\def\!tforloop#1#2\!!#3#4{%
  \def#3{#1}%
  \ifx #3\!nnil
    \let\!nextwhile=\!fornoop
  \else
    #4\relax
    \let\!nextwhile=\!tforloop
  \fi 
  \!nextwhile#2\!!#3{#4}}
\def\!etfor#1:=#2\do#3{%
  \def\!!tfor{\!tfor#1:=}%
  \edef\!!!tfor{#2}%
  \expandafter\!!tfor\!!!tfor\do{#3}}
\def\!cfor#1:=#2\do#3{%
  \edef\!fortemp{#2}%
  \ifx\!fortemp\!empty 
  \else
    \!cforloop#2,\!nil,\!nil\!!#1{#3}%
  \fi}
\def\!cforloop#1,#2\!!#3#4{%
  \def#3{#1}%
  \ifx #3\!nnil
    \let\!nextwhile=\!fornoop 
  \else
    #4\relax
    \let\!nextwhile=\!cforloop
  \fi
  \!nextwhile#2\!!#3{#4}}
\def\!ecfor#1:=#2\do#3{%
  \def\!!cfor{\!cfor#1:=}%
  \edef\!!!cfor{#2}%
  \expandafter\!!cfor\!!!cfor\do{#3}}
\def\!empty{}
\def\!nnil{\!nil}
\def\!fornoop#1\!!#2#3{}
\def\!ifempty#1#2#3{%
  \edef\!emptyarg{#1}%
  \ifx\!emptyarg\!empty
    #2%
  \else
    #3%
  \fi}
\def\!getnext#1\from#2{%
  \expandafter\!gnext#2\!#1#2}%
\def\!gnext\\#1#2\!#3#4{%
  \def#3{#1}%
  \def#4{#2\\{#1}}%
  \ignorespaces}
\def\!getnextvalueof#1\from#2{%
  \expandafter\!gnextv#2\!#1#2}%
\def\!gnextv\\#1#2\!#3#4{%
  #3=#1%
  \def#4{#2\\{#1}}%
  \ignorespaces}
\def\!copylist#1\to#2{%
  \expandafter\!!copylist#1\!#2}
\def\!!copylist#1\!#2{%
  \def#2{#1}\ignorespaces}
\def\!wlet#1=#2{%
  \let#1=#2 
  \wlog{\string#1=\string#2}}
\def\!listaddon#1#2{%
  \expandafter\!!listaddon#2\!{#1}#2}
\def\!!listaddon#1\!#2#3{%
  \def#3{#1\\#2}}
\def\!rightappend#1\withCS#2\to#3{\expandafter\!!rightappend#3\!#2{#1}#3}
\def\!!rightappend#1\!#2#3#4{\def#4{#1#2{#3}}}
\def\!leftappend#1\withCS#2\to#3{\expandafter\!!leftappend#3\!#2{#1}#3}
\def\!!leftappend#1\!#2#3#4{\def#4{#2{#3}#1}}
\def\!lop#1\to#2{\expandafter\!!lop#1\!#1#2}
\def\!!lop\\#1#2\!#3#4{\def#4{#1}\def#3{#2}}
\def\!loop#1\repeat{\def\!body{#1}\!iterate}
\def\!iterate{\!body\let\!next=\!iterate\else\let\!next=\relax\fi\!next}
\def\!!loop#1\repeat{\def\!!body{#1}\!!iterate}
\def\!!iterate{\!!body\let\!!next=\!!iterate\else\let\!!next=\relax\fi\!!next}
\def\!removept#1#2{\edef#2{\expandafter\!!removePT\the#1}}
{\catcode`p=12 \catcode`t=12 \gdef\!!removePT#1pt{#1}}
\def\placevalueinpts of <#1> in #2 {%
  \!removept{#1}{#2}}
\def\!mlap#1{\hbox to 0pt{\hss#1\hss}}
\def\!vmlap#1{\vbox to 0pt{\vss#1\vss}}
\def\!not#1{%
  #1\relax
    \!switchfalse
  \else
    \!switchtrue
  \fi
  \if!switch
  \ignorespaces}
\def\wlog#1{}    
\newdimen\headingtoplotskip     
\newdimen\linethickness         
\newdimen\longticklength        
\newdimen\plotsymbolspacing     
\newdimen\shortticklength       
\newdimen\stackleading          
\newdimen\tickstovaluesleading  
\newdimen\totalarclength        
\newdimen\valuestolabelleading  
\newbox\!boxA                   
\newbox\!boxB                   
\newbox\!picbox                 
\newbox\!plotsymbol             
\newbox\!putobject              
\newbox\!shadesymbol            
\newdimen\!Xleft                
\newdimen\!Xright               
\newdimen\!Xsave                
\newdimen\!Ybot                 
\newdimen\!Ysave                
\newdimen\!Ytop                 
\newdimen\!angle                
\newdimen\!arclength            
\newdimen\!areabloc             
\newdimen\!arealloc             
\newdimen\!arearloc             
\newdimen\!areatloc             
\newdimen\!bshrinkage           
\newdimen\!checkbot             
\newdimen\!checkleft            
\newdimen\!checkright           
\newdimen\!checktop             
\newdimen\!dimenA               
\newdimen\!dimenB               
\newdimen\!dimenC               
\newdimen\!dimenD               
\newdimen\!dimenE               
\newdimen\!dimenF               
\newdimen\!dimenG               
\newdimen\!dimenH               
\newdimen\!dimenI               
\newdimen\!distacross           
\newdimen\!downlength           
\newdimen\!dp                   
\newdimen\!dshade               
\newdimen\!dxpos                
\newdimen\!dxprime              
\newdimen\!dypos                
\newdimen\!dyprime              
\newdimen\!ht                   
\newdimen\!leaderlength         
\newdimen\!lshrinkage           
\newdimen\!midarclength         
\newdimen\!offset               
\newdimen\!plotheadingoffset    
\newdimen\!plotsymbolxshift     
\newdimen\!plotsymbolyshift     
\newdimen\!plotxorigin          
\newdimen\!plotyorigin          
\newdimen\!rootten              
\newdimen\!rshrinkage           
\newdimen\!shadesymbolxshift    
\newdimen\!shadesymbolyshift    
\newdimen\!tenAa                
\newdimen\!tenAc                
\newdimen\!tenAe                
\newdimen\!tshrinkage           
\newdimen\!uplength             
\newdimen\!wd                   
\newdimen\!wmax                 
\newdimen\!wmin                 
\newdimen\!xB                   
\newdimen\!xC                   
\newdimen\!xE                   
\newdimen\!xM                   
\newdimen\!xS                   
\newdimen\!xaxislength          
\newdimen\!xdiff                
\newdimen\!xleft                
\newdimen\!xloc                 
\newdimen\!xorigin              
\newdimen\!xpivot               
\newdimen\!xpos                 
\newdimen\!xprime               
\newdimen\!xright               
\newdimen\!xshade               
\newdimen\!xshift               
\newdimen\!xtemp                
\newdimen\!xunit                
\newdimen\!xxE                  
\newdimen\!xxM                  
\newdimen\!xxS                  
\newdimen\!xxloc                
\newdimen\!yB                   
\newdimen\!yC                   
\newdimen\!yE                   
\newdimen\!yM                   
\newdimen\!yS                   
\newdimen\!yaxislength          
\newdimen\!ybot                 
\newdimen\!ydiff                
\newdimen\!yloc                 
\newdimen\!yorigin              
\newdimen\!ypivot               
\newdimen\!ypos                 
\newdimen\!yprime               
\newdimen\!yshade               
\newdimen\!yshift               
\newdimen\!ytemp                
\newdimen\!ytop                 
\newdimen\!yunit                
\newdimen\!yyE                  
\newdimen\!yyM                  
\newdimen\!yyS                  
\newdimen\!yyloc                
\newdimen\!zpt                  
\newif\if!axisvisible           
\newif\if!gridlinestoo          
\newif\if!keepPO                
\newif\if!placeaxislabel        
\newif\if!switch                
\newif\if!xswitch               
\newtoks\!axisLaBeL             
\newtoks\!keywordtoks           
\newwrite\!replotfile           
\def\!cosrotationangle{1}      
\def\!sinrotationangle{0}      
\def\!xpivotcoord{0}           
\def\!xref{0}                  
\def\!xshadesave{0}            
\def\!ypivotcoord{0}           
\def\!yref{0}                  
\def\!yshadesave{0}            
\def\!zero{0}                  
\let\wlog=\!!!wlog
\def\normalgraphs{%
  \longticklength=.4\baselineskip
  \shortticklength=.25\baselineskip
  \tickstovaluesleading=.25\baselineskip
  \valuestolabelleading=.8\baselineskip
  \linethickness=.4pt
  \stackleading=.17\baselineskip
  \headingtoplotskip=1.5\baselineskip
  \visibleaxes
  \ticksout
  \nogridlines
  \unloggedticks}
\def\setplotarea x from #1 to #2, y from #3 to #4 {%
  \!arealloc=\!M{#1}\!xunit \advance \!arealloc -\!xorigin
  \!areabloc=\!M{#3}\!yunit \advance \!areabloc -\!yorigin
  \!arearloc=\!M{#2}\!xunit \advance \!arearloc -\!xorigin
  \!areatloc=\!M{#4}\!yunit \advance \!areatloc -\!yorigin
  \!initinboundscheck
  \!xaxislength=\!arearloc  \advance\!xaxislength -\!arealloc
  \!yaxislength=\!areatloc  \advance\!yaxislength -\!areabloc
  \!plotheadingoffset=\!zpt
  \!dimenput {{\setbox0=\hbox{}\wd0=\!xaxislength\ht0=\!yaxislength\box0}}
     [bl] (\!arealloc,\!areabloc)}
\def\visibleaxes{%
  \def\!axisvisibility{\!axisvisibletrue}}
\def\!fixkeyword#1{%
  \errhelp=\!keywordhelp
  \errmessage{Unrecognized keyword `#1': \the\!keywordtoks{NEW KEYWORD}'}}
\def\fixkeyword#1{%
  \!nextkeyword#1 }
\def\axis {%
  \def\!nextkeyword##1 {%
    \expandafter\ifx\csname !axis##1\endcsname \relax
      \def\!next{\!fixkeyword{##1}}%
    \else
      \def\!next{\csname !axis##1\endcsname}%
    \fi
    \!next}%
  \!offset=\!zpt
  \!axisvisibility
  \!placeaxislabelfalse
  \!nextkeyword}
\def\!axisbottom{%
  \!axisylevel=\!areabloc
  \def\!tickxsign{0}%
  \def\!tickysign{-}%
  \def\!axissetup{\!axisxsetup}%
  \def\!axislabeltbrl{t}%
  \!nextkeyword}
\def\!axistop{%
  \!axisylevel=\!areatloc
  \def\!tickxsign{0}%
  \def\!tickysign{+}%
  \def\!axissetup{\!axisxsetup}%
  \def\!axislabeltbrl{b}%
  \!nextkeyword}
\def\!axisleft{%
  \!axisxlevel=\!arealloc
  \def\!tickxsign{-}%
  \def\!tickysign{0}%
  \def\!axissetup{\!axisysetup}%
  \def\!axislabeltbrl{r}%
  \!nextkeyword}
\def\!axisright{%
  \!axisxlevel=\!arearloc
  \def\!tickxsign{+}%
  \def\!tickysign{0}%
  \def\!axissetup{\!axisysetup}%
  \def\!axislabeltbrl{l}%
  \!nextkeyword}
\def\!axisshiftedto#1=#2 {%
  \if 0\!tickxsign
    \!axisylevel=\!M{#2}\!yunit
    \advance\!axisylevel -\!yorigin
  \else
    \!axisxlevel=\!M{#2}\!xunit
    \advance\!axisxlevel -\!xorigin
  \fi
  \!nextkeyword}
\def\!axisvisible{%
  \!axisvisibletrue  
  \!nextkeyword}
\def\!axisinvisible{%
  \!axisvisiblefalse
  \!nextkeyword}
\def\!axislabel#1 {%
  \!axisLaBeL={#1}%
  \!placeaxislabeltrue
  \!nextkeyword}
\def\csname !axis/\endcsname{%
  \!axissetup 
  \if!placeaxislabel
    \!placeaxislabel
  \fi
  \if +\!tickysign 
    \!dimenA=\!axisylevel
    \advance\!dimenA \!offset 
    \advance\!dimenA -\!areatloc 
    \ifdim \!dimenA>\!plotheadingoffset
      \!plotheadingoffset=\!dimenA 
    \fi
  \fi}
\def\grid #1 #2 {%
  \!countA=#1\advance\!countA 1
  \axis bottom invisible ticks length <\!zpt> andacross quantity {\!countA} /
  \!countA=#2\advance\!countA 1
  \axis left   invisible ticks length <\!zpt> andacross quantity {\!countA} / }
\def\plotheading#1 {%
  \advance\!plotheadingoffset \headingtoplotskip
  \!dimenput {#1} [B] <.5\!xaxislength,\!plotheadingoffset>
    (\!arealloc,\!areatloc)}
\def\!axisxsetup{%
  \!axisxlevel=\!arealloc
  \!axisstart=\!arealloc
  \!axisend=\!arearloc
  \!axisLength=\!xaxislength
  \!!origin=\!xorigin
  \!!unit=\!xunit
  \!xswitchtrue
  \if!axisvisible 
    \!makeaxis
  \fi}
\def\!axisysetup{%
  \!axisylevel=\!areabloc
  \!axisstart=\!areabloc
  \!axisend=\!areatloc
  \!axisLength=\!yaxislength
  \!!origin=\!yorigin
  \!!unit=\!yunit
  \!xswitchfalse
  \if!axisvisible
    \!makeaxis
  \fi}
\def\!makeaxis{%
  \setbox\!boxA=\hbox{
    \beginpicture
      \!setdimenmode
      \setcoordinatesystem point at {\!zpt} {\!zpt}   
      \putrule from {\!zpt} {\!zpt} to
        {\!tickysign\!tickysign\!axisLength} 
        {\!tickxsign\!tickxsign\!axisLength}
    \endpicturesave <\!Xsave,\!Ysave>}%
    \wd\!boxA=\!zpt
    \!placetick\!axisstart}
\def\!placeaxislabel{%
  \advance\!offset \valuestolabelleading
  \if!xswitch
    \!dimenput {\the\!axisLaBeL} [\!axislabeltbrl]
      <.5\!axisLength,\!tickysign\!offset> (\!axisxlevel,\!axisylevel)
    \advance\!offset \!dp  
    \advance\!offset \!ht  
  \else
    \!dimenput {\the\!axisLaBeL} [\!axislabeltbrl]
      <\!tickxsign\!offset,.5\!axisLength> (\!axisxlevel,\!axisylevel)
  \fi
  \!axisLaBeL={}}
\def\arrow <#1> [#2,#3]{%
  \!ifnextchar<{\!arrow{#1}{#2}{#3}}{\!arrow{#1}{#2}{#3}<\!zpt,\!zpt> }}
\def\!arrow#1#2#3<#4,#5> from #6 #7 to #8 #9 {%
%
  \!xloc=\!M{#8}\!xunit   
  \!yloc=\!M{#9}\!yunit
  \!dxpos=\!xloc  \!dimenA=\!M{#6}\!xunit  \advance \!dxpos -\!dimenA
  \!dypos=\!yloc  \!dimenA=\!M{#7}\!yunit  \advance \!dypos -\!dimenA
  \let\!MAH=\!M
  \!setdimenmode
  \!xshift=#4\relax  \!yshift=#5\relax
  \!reverserotateonly\!xshift\!yshift
  \advance\!xshift\!xloc  \advance\!yshift\!yloc
%
  \!xS=-\!dxpos  \advance\!xS\!xshift
  \!yS=-\!dypos  \advance\!yS\!yshift
  \!start (\!xS,\!yS)
  \!ljoin (\!xshift,\!yshift)
%
  \!Pythag\!dxpos\!dypos\!arclength
  \!divide\!dxpos\!arclength\!dxpos  
  \!dxpos=32\!dxpos  \!removept\!dxpos\!!cos
  \!divide\!dypos\!arclength\!dypos  
  \!dypos=32\!dypos  \!removept\!dypos\!!sin
%
  \!halfhead{#1}{#2}{#3}
  \!halfhead{#1}{-#2}{-#3}
  \let\!M=\!MAH
  \ignorespaces}
  \def\!halfhead#1#2#3{%
    \!dimenC=-#1%
    \divide \!dimenC 2 
    \!dimenD=#2\!dimenC
    \!rotate(\!dimenC,\!dimenD)by(\!!cos,\!!sin)to(\!xM,\!yM)
    \!dimenC=-#1
    \!dimenD=#3\!dimenC
    \!dimenD=.5\!dimenD
    \!rotate(\!dimenC,\!dimenD)by(\!!cos,\!!sin)to(\!xE,\!yE)
    \!start (\!xshift,\!yshift)
    \advance\!xM\!xshift  \advance\!yM\!yshift
    \advance\!xE\!xshift  \advance\!yE\!yshift
    \!qjoin (\!xM,\!yM) (\!xE,\!yE) 
    \ignorespaces}
\def\betweenarrows #1#2 from #3 #4 to #5 #6 {%
  \!xloc=\!M{#3}\!xunit  \!xxloc=\!M{#5}\!xunit%
  \!yloc=\!M{#4}\!yunit  \!yyloc=\!M{#6}\!yunit%
  \!dxpos=\!xxloc  \advance\!dxpos by -\!xloc
  \!dypos=\!yyloc  \advance\!dypos by -\!yloc
  \advance\!xloc .5\!dxpos
  \advance\!yloc .5\!dypos
  \let\!MBA=\!M
  \!setdimenmode
  \ifdim\!dypos=\!zpt
    \ifdim\!dxpos<\!zpt \!dxpos=-\!dxpos \fi
    \put {\!lrarrows{\!dxpos}{#1}}#2{} at {\!xloc} {\!yloc}
  \else
    \ifdim\!dxpos=\!zpt
      \ifdim\!dypos<\!zpt \!dypos=-\!zpt \fi
      \put {\!udarrows{\!dypos}{#1}}#2{} at {\!xloc} {\!yloc}
    \fi
  \fi
  \let\!M=\!MBA
  \ignorespaces}
\def\!lrarrows#1#2{
  {\setbox\!boxA=\hbox{$\mkern-2mu\mathord-\mkern-2mu$}%
   \setbox\!boxB=\hbox{$\leftarrow$}\!dimenE=\ht\!boxB
   \setbox\!boxB=\hbox{}\ht\!boxB=2\!dimenE
   \hbox to #1{$\mathord\leftarrow\mkern-6mu
     \cleaders\copy\!boxA\hfil
     \mkern-6mu\mathord-$%
     \kern.4em $\vcenter{\box\!boxB}$$\vcenter{\hbox{#2}}$\kern.4em
     $\mathord-\mkern-6mu
     \cleaders\copy\!boxA\hfil
     \mkern-6mu\mathord\rightarrow$}}}
\def\!udarrows#1#2{
  {\setbox\!boxB=\hbox{#2}%
   \setbox\!boxA=\hbox to \wd\!boxB{\hss$\vert$\hss}%
   \!dimenE=\ht\!boxA \advance\!dimenE \dp\!boxA \divide\!dimenE 2
   \vbox to #1{\offinterlineskip
      \vskip .05556\!dimenE
      \hbox to \wd\!boxB{\hss$\mkern.4mu\uparrow$\hss}\vskip-\!dimenE
      \cleaders\copy\!boxA\vfil
      \vskip-\!dimenE\copy\!boxA
      \vskip\!dimenE\copy\!boxB\vskip.4em
      \copy\!boxA\vskip-\!dimenE
      \cleaders\copy\!boxA\vfil
      \vskip-\!dimenE \hbox to \wd\!boxB{\hss$\mkern.4mu\downarrow$\hss}
      \vskip .05556\!dimenE}}}
\def\putbar#1breadth <#2> from #3 #4 to #5 #6 {%
  \!xloc=\!M{#3}\!xunit  \!xxloc=\!M{#5}\!xunit%
  \!yloc=\!M{#4}\!yunit  \!yyloc=\!M{#6}\!yunit%
  \!dypos=\!yyloc  \advance\!dypos by -\!yloc
  \!dimenI=#2  
  \ifdim \!dimenI=\!zpt 
    \putrule#1from {#3} {#4} to {#5} {#6} 
  \else 
    \let\!MBar=\!M
    \!setdimenmode 
    \divide\!dimenI 2
    \ifdim \!dypos=\!zpt             
      \advance \!yloc -\!dimenI 
      \advance \!yyloc \!dimenI
    \else
      \advance \!xloc -\!dimenI 
      \advance \!xxloc \!dimenI
    \fi
    \putrectangle#1corners at {\!xloc} {\!yloc} and {\!xxloc} {\!yyloc}
    \let\!M=\!MBar 
  \fi
  \ignorespaces}
\def\setbars#1breadth <#2> baseline at #3 = #4 {%
  \edef\!barshift{#1}%
  \edef\!barbreadth{#2}%
  \edef\!barorientation{#3}%
  \edef\!barbaseline{#4}%
  \def\!bardobaselabel{\!bardoendlabel}%
  \def\!bardoendlabel{\!barfinish}%
  \let\!drawcurve=\!barcurve
  \!setbars}
\def\!setbars{%
  \futurelet\!nextchar\!!setbars}
\def\!!setbars{%
  \if b\!nextchar
    \def\!!!setbars{\!setbarsbget}%
  \else 
    \if e\!nextchar
      \def\!!!setbars{\!setbarseget}%
    \else
      \def\!!!setbars{\relax}%
    \fi
  \fi
  \!!!setbars}
\def\!setbarsbget baselabels (#1) {%
  \def\!barbaselabelorientation{#1}%
  \def\!bardobaselabel{\!!bardobaselabel}%
  \!setbars}
\def\!setbarseget endlabels (#1) {%
  \edef\!barendlabelorientation{#1}%
  \def\!bardoendlabel{\!!bardoendlabel}%
  \!setbars}
\def\!barcurve #1 #2 {%
  \if y\!barorientation
    \def\!basexarg{#1}%
    \def\!baseyarg{\!barbaseline}%
  \else
    \def\!basexarg{\!barbaseline}%
    \def\!baseyarg{#2}%
  \fi
  \expandafter\putbar\!barshift breadth <\!barbreadth> from {\!basexarg}
    {\!baseyarg} to {#1} {#2}
  \def\!endxarg{#1}%
  \def\!endyarg{#2}%
  \!bardobaselabel}
\def\!!bardobaselabel "#1" {%
  \put {#1}\!barbaselabelorientation{} at {\!basexarg} {\!baseyarg}
  \!bardoendlabel}
\def\!!bardoendlabel "#1" {%
  \put {#1}\!barendlabelorientation{} at {\!endxarg} {\!endyarg}
  \!barfinish}
\def\!barfinish{%
  \!ifnextchar/{\!finish}{\!barcurve}}
\def\putrectangle{%
  \!ifnextchar<{\!putrectangle}{\!putrectangle<\!zpt,\!zpt> }}
\def\!putrectangle<#1,#2> corners at #3 #4 and #5 #6 {%
%
  \!xone=\!M{#3}\!xunit  \!xtwo=\!M{#5}\!xunit%
  \!yone=\!M{#4}\!yunit  \!ytwo=\!M{#6}\!yunit%
  \ifdim \!xtwo<\!xone
    \!dimenI=\!xone  \!xone=\!xtwo  \!xtwo=\!dimenI
  \fi
  \ifdim \!ytwo<\!yone
    \!dimenI=\!yone  \!yone=\!ytwo  \!ytwo=\!dimenI
  \fi
  \!dimenI=#1\relax  \advance\!xone\!dimenI  \advance\!xtwo\!dimenI
  \!dimenI=#2\relax  \advance\!yone\!dimenI  \advance\!ytwo\!dimenI
  \let\!MRect=\!M
  \!setdimenmode
%
  \!shaderectangle
%
  \!dimenI=.5\linethickness
  \advance \!xone  -\!dimenI
  \advance \!xtwo   \!dimenI
  \putrule from {\!xone} {\!yone} to {\!xtwo} {\!yone} 
  \putrule from {\!xone} {\!ytwo} to {\!xtwo} {\!ytwo} 
%
  \advance \!xone   \!dimenI
  \advance \!xtwo  -\!dimenI%
  \advance \!yone  -\!dimenI
  \advance \!ytwo   \!dimenI
  \putrule from {\!xone} {\!yone} to {\!xone} {\!ytwo} 
  \putrule from {\!xtwo} {\!yone} to {\!xtwo} {\!ytwo} 
  \let\!M=\!MRect
  \ignorespaces}
\def\shaderectanglesoff{%
  \def\!shaderectangle{}%
  \ignorespaces}
\def\!!shaderectangle{%
  \!dimenA=\!xtwo  \advance \!dimenA -\!xone
  \!dimenB=\!ytwo  \advance \!dimenB -\!yone
  \ifdim \!dimenA<\!dimenB
    \!startvshade (\!xone,\!yone,\!ytwo)
    \!lshade      (\!xtwo,\!yone,\!ytwo)
  \else
    \!starthshade (\!yone,\!xone,\!xtwo)
    \!lshade      (\!ytwo,\!xone,\!xtwo)
  \fi
  \ignorespaces}
\def\frame{%
  \!ifnextchar<{\!frame}{\!frame<\!zpt> }}
\long\def\!frame<#1> #2{%
  \beginpicture
    \setcoordinatesystem units <1pt,1pt> point at 0 0 
    \put {#2} [Bl] at 0 0 
    \!dimenA=#1\relax
    \!dimenB=\!wd \advance \!dimenB \!dimenA
    \!dimenC=\!ht \advance \!dimenC \!dimenA
    \!dimenD=\!dp \advance \!dimenD \!dimenA
    \let\!MFr=\!M
    \!setdimenmode
    \putrectangle corners at {-\!dimenA} {-\!dimenD} and {\!dimenB} {\!dimenC}
    \!setcoordmode
    \let\!M=\!MFr
  \endpicture
  \ignorespaces}
\def\rectangle <#1> <#2> {%
  \setbox0=\hbox{}\wd0=#1\ht0=#2\frame {\box0}}
\def\plot{%
  \!ifnextchar"{\!plotfromfile}{\!drawcurve}}
\def\!plotfromfile"#1"{%
  \expandafter\!drawcurve \input #1 /}
\def\setquadratic{%
  \let\!drawcurve=\!qcurve
  \let\!!Shade=\!!qShade
  \let\!!!Shade=\!!!qShade}
\def\setlinear{%
  \let\!drawcurve=\!lcurve
  \let\!!Shade=\!!lShade
  \let\!!!Shade=\!!!lShade}
\def\sethistograms{%
  \let\!drawcurve=\!hcurve}
\def\!qcurve #1 #2 {%
  \!start (#1,#2)
  \!Qjoin}
\def\!Qjoin#1 #2 #3 #4 {%
  \!qjoin (#1,#2) (#3,#4)             
  \!ifnextchar/{\!finish}{\!Qjoin}}
\def\!lcurve #1 #2 {%
  \!start (#1,#2)
  \!Ljoin}
\def\!Ljoin#1 #2 {%
  \!ljoin (#1,#2)                    
  \!ifnextchar/{\!finish}{\!Ljoin}}
\def\!finish/{\ignorespaces}
\def\!hcurve #1 #2 {%
  \edef\!hxS{#1}%
  \edef\!hyS{#2}%
  \!hjoin}
\def\!hjoin#1 #2 {%
  \putrectangle corners at {\!hxS} {\!hyS} and {#1} {#2}
  \edef\!hxS{#1}%
  \!ifnextchar/{\!finish}{\!hjoin}}
\def\vshade #1 #2 #3 {%
  \!startvshade (#1,#2,#3)
  \!Shadewhat}
\def\hshade #1 #2 #3 {%
  \!starthshade (#1,#2,#3)
  \!Shadewhat}
\def\!Shadewhat{%
  \futurelet\!nextchar\!Shade}
\def\!Shade{%
  \if <\!nextchar
    \def\!nextShade{\!!Shade}%
  \else
    \if /\!nextchar
      \def\!nextShade{\!finish}%
    \else
      \def\!nextShade{\!!!Shade}%
    \fi
  \fi
  \!nextShade}
\def\!!lShade<#1> #2 #3 #4 {%
  \!lshade <#1> (#2,#3,#4)                 
  \!Shadewhat}
\def\!!!lShade#1 #2 #3 {%
  \!lshade (#1,#2,#3)
  \!Shadewhat} 
\def\!!qShade<#1> #2 #3 #4 #5 #6 #7 {%
  \!qshade <#1> (#2,#3,#4) (#5,#6,#7)      
  \!Shadewhat}
\def\!!!qShade#1 #2 #3 #4 #5 #6 {%
  \!qshade (#1,#2,#3) (#4,#5,#6)
  \!Shadewhat} 
\def\setdashpattern <#1>{%
  \def\!Flist{}\def\!Blist{}\def\!UDlist{}%
  \!countA=0
  \!ecfor\!item:=#1\do{%
    \!dimenA=\!item\relax
    \expandafter\!rightappend\the\!dimenA\withCS{\\}\to\!UDlist%
    \advance\!countA  1
    \ifodd\!countA
      \expandafter\!rightappend\the\!dimenA\withCS{\!Rule}\to\!Flist%
      \expandafter\!leftappend\the\!dimenA\withCS{\!Rule}\to\!Blist%
    \else 
      \expandafter\!rightappend\the\!dimenA\withCS{\!Skip}\to\!Flist%
      \expandafter\!leftappend\the\!dimenA\withCS{\!Skip}\to\!Blist%
    \fi}%
  \!leaderlength=\!zpt
  \def\!Rule##1{\advance\!leaderlength  ##1}%
  \def\!Skip##1{\advance\!leaderlength  ##1}%
  \!Flist%
  \ifdim\!leaderlength>\!zpt 
  \else
    \def\!Flist{\!Skip{24in}}\def\!Blist{\!Skip{24in}}\ignorespaces
    \def\!UDlist{\\{\!zpt}\\{24in}}\ignorespaces
    \!leaderlength=24in
  \fi
  \!dashingon}
\def\!dashingon{%
  \def\!advancedashing{\!!advancedashing}%
  \def\!drawlinearsegment{\!lineardashed}%
  \def\!puthline{\!putdashedhline}%
  \def\!putvline{\!putdashedvline}%
  \ignorespaces}%
\def\!dashingoff{%
  \def\!advancedashing{\relax}%
  \def\!drawlinearsegment{\!linearsolid}%
  \def\!puthline{\!putsolidhline}%
  \def\!putvline{\!putsolidvline}%
  \ignorespaces}
\def\setdots{%
  \!ifnextchar<{\!setdots}{\!setdots<5pt>}}
\def\!setdots<#1>{%
  \!dimenB=#1\advance\!dimenB -\plotsymbolspacing
  \ifdim\!dimenB<\!zpt
    \!dimenB=\!zpt
  \fi
\setdashpattern <\plotsymbolspacing,\!dimenB>}
\def\setdotsnear <#1> for <#2>{%
  \!dimenB=#2\relax  \advance\!dimenB -.05pt  
  \!dimenC=#1\relax  \!countA=\!dimenC 
  \!dimenD=\!dimenB  \advance\!dimenD .5\!dimenC  \!countB=\!dimenD
  \divide \!countB  \!countA
  \ifnum 1>\!countB 
    \!countB=1
  \fi
  \divide\!dimenB  \!countB
  \setdots <\!dimenB>}
\def\setdashes{%
  \!ifnextchar<{\!setdashes}{\!setdashes<5pt>}}
\def\!setdashes<#1>{\setdashpattern <#1,#1>}
\def\setdashesnear <#1> for <#2>{%
  \!dimenB=#2\relax  
  \!dimenC=#1\relax  \!countA=\!dimenC 
  \!dimenD=\!dimenB  \advance\!dimenD .5\!dimenC  \!countB=\!dimenD
  \divide \!countB  \!countA
  \ifodd \!countB 
  \else 
    \advance \!countB  1
  \fi
  \divide\!dimenB  \!countB
  \setdashes <\!dimenB>}
\def\setsolid{%
  \def\!Flist{\!Rule{24in}}\def\!Blist{\!Rule{24in}}%
  \def\!UDlist{\\{24in}\\{\!zpt}}%
  \!dashingoff}  
\def\!divide#1#2#3{%
  \!dimenB=#1
  \!dimenC=#2
  \!dimenD=\!dimenB
  \divide \!dimenD \!dimenC
  \!dimenA=\!dimenD
  \multiply\!dimenD \!dimenC
  \advance\!dimenB -\!dimenD
  \!dimenD=\!dimenC
    \ifdim\!dimenD<\!zpt \!dimenD=-\!dimenD 
  \fi
  \ifdim\!dimenD<64pt
    \!divstep[\!tfs]\!divstep[\!tfs]%
  \else 
    \!!divide
  \fi
  #3=\!dimenA\ignorespaces}
\def\!!divide{%
  \ifdim\!dimenD<256pt
    \!divstep[64]\!divstep[32]\!divstep[32]%
  \else 
    \!divstep[8]\!divstep[8]\!divstep[8]\!divstep[8]\!divstep[8]%
    \!dimenA=2\!dimenA
  \fi}
\def\!divstep[#1]{
  \!dimenB=#1\!dimenB
  \!dimenD=\!dimenB
    \divide \!dimenD by \!dimenC
  \!dimenA=#1\!dimenA
    \advance\!dimenA by \!dimenD%
  \multiply\!dimenD by \!dimenC
    \advance\!dimenB by -\!dimenD}
\def\Divide <#1> by <#2> forming <#3> {%
  \!divide{#1}{#2}{#3}}
\def\circulararc{%
  \ellipticalarc axes ratio 1:1 }
\def\ellipticalarc axes ratio #1:#2 #3 degrees from #4 #5 center at #6 #7 {%
  \!angle=#3pt\relax
  \ifdim\!angle>\!zpt 
    \def\!sign{}
  \else 
    \def\!sign{-}\!angle=-\!angle
  \fi
  \!xxloc=\!M{#6}\!xunit
  \!yyloc=\!M{#7}\!yunit     
  \!xxS=\!M{#4}\!xunit
  \!yyS=\!M{#5}\!yunit
  \advance\!xxS -\!xxloc
  \advance\!yyS -\!yyloc
  \!divide\!xxS{#1pt}\!xxS 
  \!divide\!yyS{#2pt}\!yyS 
  \let\!MC=\!M
  \!setdimenmode
  \!xS=#1\!xxS  \advance\!xS\!xxloc
  \!yS=#2\!yyS  \advance\!yS\!yyloc
  \!start (\!xS,\!yS)%
  \!loop\ifdim\!angle>14.9999pt
    \!rotate(\!xxS,\!yyS)by(\!cos,\!sign\!sin)to(\!xxM,\!yyM) 
    \!rotate(\!xxM,\!yyM)by(\!cos,\!sign\!sin)to(\!xxE,\!yyE)
    \!xM=#1\!xxM  \advance\!xM\!xxloc  \!yM=#2\!yyM  \advance\!yM\!yyloc
    \!xE=#1\!xxE  \advance\!xE\!xxloc  \!yE=#2\!yyE  \advance\!yE\!yyloc
    \!qjoin (\!xM,\!yM) (\!xE,\!yE)
    \!xxS=\!xxE  \!yyS=\!yyE 
    \advance \!angle -15pt
  \repeat
  \ifdim\!angle>\!zpt
    \!angle=100.53096\!angle
    \divide \!angle 360 
    \!sinandcos\!angle\!!sin\!!cos
    \!rotate(\!xxS,\!yyS)by(\!!cos,\!sign\!!sin)to(\!xxM,\!yyM) 
    \!rotate(\!xxM,\!yyM)by(\!!cos,\!sign\!!sin)to(\!xxE,\!yyE)
    \!xM=#1\!xxM  \advance\!xM\!xxloc  \!yM=#2\!yyM  \advance\!yM\!yyloc
    \!xE=#1\!xxE  \advance\!xE\!xxloc  \!yE=#2\!yyE  \advance\!yE\!yyloc
    \!qjoin (\!xM,\!yM) (\!xE,\!yE)
  \fi
  \let\!M=\!MC
  \ignorespaces}
\def\!rotate(#1,#2)by(#3,#4)to(#5,#6){%
  \!dimenA=#3#1\advance \!dimenA -#4#2
  \!dimenB=#3#2\advance \!dimenB  #4#1
  \divide \!dimenA 32  \divide \!dimenB 32 
  #5=\!dimenA  #6=\!dimenB
  \ignorespaces}
\def\!sin{4.17684}
\def\!cos{31.72624}
\def\!sinandcos#1#2#3{%
 \!dimenD=#1
 \!dimenA=\!dimenD
 \!dimenB=32pt
 \!removept\!dimenD\!value
 \!dimenC=\!dimenD
 \!dimenC=\!value\!dimenC \divide\!dimenC by 64 
 \advance\!dimenB by -\!dimenC
 \!dimenC=\!value\!dimenC \divide\!dimenC by 96 
 \advance\!dimenA by -\!dimenC
 \!dimenC=\!value\!dimenC \divide\!dimenC by 128 
 \advance\!dimenB by \!dimenC%
 \!removept\!dimenA#2
 \!removept\!dimenB#3
 \ignorespaces}
\def\putrule#1from #2 #3 to #4 #5 {%
  \!xloc=\!M{#2}\!xunit  \!xxloc=\!M{#4}\!xunit%
  \!yloc=\!M{#3}\!yunit  \!yyloc=\!M{#5}\!yunit%
  \!dxpos=\!xxloc  \advance\!dxpos by -\!xloc
  \!dypos=\!yyloc  \advance\!dypos by -\!yloc
  \ifdim\!dypos=\!zpt
    \def\!!Line{\!puthline{#1}}\ignorespaces
  \else
    \ifdim\!dxpos=\!zpt
      \def\!!Line{\!putvline{#1}}\ignorespaces
    \else 
       \def\!!Line{}
    \fi
  \fi
  \let\!ML=\!M
  \!setdimenmode
  \!!Line%
  \let\!M=\!ML
  \ignorespaces}
\def\!putsolidhline#1{%
  \ifdim\!dxpos>\!zpt 
    \put{\!hline\!dxpos}#1[l] at {\!xloc} {\!yloc}
  \else 
    \put{\!hline{-\!dxpos}}#1[l] at {\!xxloc} {\!yyloc}
  \fi
  \ignorespaces}
\def\!putsolidvline#1{%
  \ifdim\!dypos>\!zpt 
    \put{\!vline\!dypos}#1[b] at {\!xloc} {\!yloc}
  \else 
    \put{\!vline{-\!dypos}}#1[b] at {\!xxloc} {\!yyloc}
  \fi
  \ignorespaces}
\def\!hline#1{\hbox to #1{\leaders \hrule height\linethickness\hfill}}
\def\!vline#1{\vbox to #1{\leaders \vrule width\linethickness\vfill}}
\def\!putdashedhline#1{%
  \ifdim\!dxpos>\!zpt 
    \!DLsetup\!Flist\!dxpos
    \put{\hbox to \!totalleaderlength{\!hleaders}\!hpartialpattern\!Rtrunc}
      #1[l] at {\!xloc} {\!yloc} 
  \else 
    \!DLsetup\!Blist{-\!dxpos}
    \put{\!hpartialpattern\!Ltrunc\hbox to \!totalleaderlength{\!hleaders}}
      #1[r] at {\!xloc} {\!yloc} 
  \fi
  \ignorespaces}
\def\!putdashedvline#1{%
  \!dypos=-\!dypos
  \ifdim\!dypos>\!zpt 
    \!DLsetup\!Flist\!dypos 
    \put{\vbox{\vbox to \!totalleaderlength{\!vleaders}
      \!vpartialpattern\!Rtrunc}}#1[t] at {\!xloc} {\!yloc} 
  \else 
    \!DLsetup\!Blist{-\!dypos}
    \put{\vbox{\!vpartialpattern\!Ltrunc
      \vbox to \!totalleaderlength{\!vleaders}}}#1[b] at {\!xloc} {\!yloc} 
  \fi
  \ignorespaces}
\def\!DLsetup#1#2{
  \let\!RSlist=#1
  \!countB=#2
  \!countA=\!leaderlength
  \divide\!countB by \!countA
  \!totalleaderlength=\!countB\!leaderlength
  \!Rresiduallength=#2%
  \advance \!Rresiduallength by -\!totalleaderlength
  \!Lresiduallength=\!leaderlength
  \advance \!Lresiduallength by -\!Rresiduallength
  \ignorespaces}
\def\!hleaders{%
  \def\!Rule##1{\vrule height\linethickness width##1}%
  \def\!Skip##1{\hskip##1}%
  \leaders\hbox{\!RSlist}\hfill}
\def\!hpartialpattern#1{%
  \!dimenA=\!zpt \!dimenB=\!zpt 
  \def\!Rule##1{#1{##1}\vrule height\linethickness width\!dimenD}%
  \def\!Skip##1{#1{##1}\hskip\!dimenD}%
  \!RSlist}
\def\!vleaders{%
  \def\!Rule##1{\hrule width\linethickness height##1}%
  \def\!Skip##1{\vskip##1}%
  \leaders\vbox{\!RSlist}\vfill}
\def\!vpartialpattern#1{%
  \!dimenA=\!zpt \!dimenB=\!zpt 
  \def\!Rule##1{#1{##1}\hrule width\linethickness height\!dimenD}%
  \def\!Skip##1{#1{##1}\vskip\!dimenD}%
  \!RSlist}
\def\!Rtrunc#1{\!trunc{#1}>\!Rresiduallength}
\def\!Ltrunc#1{\!trunc{#1}<\!Lresiduallength}
\def\!trunc#1#2#3{%
  \!dimenA=\!dimenB         
  \advance\!dimenB by #1%
  \!dimenD=\!dimenB  \ifdim\!dimenD#2#3\!dimenD=#3\fi
  \!dimenC=\!dimenA  \ifdim\!dimenC#2#3\!dimenC=#3\fi
  \advance \!dimenD by -\!dimenC}
\def\!start (#1,#2){%
  \!plotxorigin=\!xorigin  \advance \!plotxorigin by \!plotsymbolxshift
  \!plotyorigin=\!yorigin  \advance \!plotyorigin by \!plotsymbolyshift
  \!xS=\!M{#1}\!xunit \!yS=\!M{#2}\!yunit
  \!rotateaboutpivot\!xS\!yS
  \!copylist\!UDlist\to\!!UDlist
  \!getnextvalueof\!downlength\from\!!UDlist
  \!distacross=\!zpt
  \!intervalno=0 
  \global\totalarclength=\!zpt
  \ignorespaces}
\def\!ljoin (#1,#2){%
  \advance\!intervalno by 1
  \!xE=\!M{#1}\!xunit \!yE=\!M{#2}\!yunit
  \!rotateaboutpivot\!xE\!yE
  \!xdiff=\!xE \advance \!xdiff by -\!xS
  \!ydiff=\!yE \advance \!ydiff by -\!yS
  \!Pythag\!xdiff\!ydiff\!arclength
  \global\advance \totalarclength by \!arclength%
  \!drawlinearsegment
  \!xS=\!xE \!yS=\!yE
  \ignorespaces}
\def\!linearsolid{%
  \!npoints=\!arclength
  \!countA=\plotsymbolspacing
  \divide\!npoints by \!countA
  \ifnum \!npoints<1 
    \!npoints=1 
  \fi
  \divide\!xdiff by \!npoints
  \divide\!ydiff by \!npoints
  \!xpos=\!xS \!ypos=\!yS
  \loop\ifnum\!npoints>-1
    \!plotifinbounds
    \advance \!xpos by \!xdiff
    \advance \!ypos by \!ydiff
    \advance \!npoints by -1
  \repeat
  \ignorespaces}
\def\!lineardashed{%
  \ifdim\!distacross>\!arclength
    \advance \!distacross by -\!arclength  
  \else
    \loop\ifdim\!distacross<\!arclength
      \!divide\!distacross\!arclength\!dimenA
      \!removept\!dimenA\!t
      \!xpos=\!t\!xdiff \advance \!xpos by \!xS
      \!ypos=\!t\!ydiff \advance \!ypos by \!yS
      \!plotifinbounds
      \advance\!distacross by \plotsymbolspacing
      \!advancedashing
    \repeat  
    \advance \!distacross by -\!arclength
  \fi
  \ignorespaces}
\def\!!advancedashing{%
  \advance\!downlength by -\plotsymbolspacing
  \ifdim \!downlength>\!zpt
  \else
    \advance\!distacross by \!downlength
    \!getnextvalueof\!uplength\from\!!UDlist
    \advance\!distacross by \!uplength
    \!getnextvalueof\!downlength\from\!!UDlist
  \fi}
\def\inboundscheckoff{%
  \def\!plotifinbounds{\!plot(\!xpos,\!ypos)}%
  \def\!initinboundscheck{\relax}\ignorespaces}
\def\!!plotifinbounds{%
  \ifdim \!xpos<\!checkleft
  \else
    \ifdim \!xpos>\!checkright
    \else
      \ifdim \!ypos<\!checkbot
      \else
         \ifdim \!ypos>\!checktop
         \else
           \!plot(\!xpos,\!ypos)
         \fi 
      \fi
    \fi
  \fi}
\def\!!initinboundscheck{%
  \!checkleft=\!arealloc     \advance\!checkleft by \!xorigin
  \!checkright=\!arearloc    \advance\!checkright by \!xorigin
  \!checkbot=\!areabloc      \advance\!checkbot by \!yorigin
  \!checktop=\!areatloc      \advance\!checktop by \!yorigin}
\def\!logten#1#2{%
  \expandafter\!!logten#1\!nil
  \!removept\!dimenF#2%
  \ignorespaces}
\def\!!logten#1#2\!nil{%
  \if -#1%
    \!dimenF=\!zpt
    \def\!next{\ignorespaces}%
  \else
    \if +#1%
      \def\!next{\!!logten#2\!nil}%
    \else
      \if .#1%
        \def\!next{\!!logten0.#2\!nil}%
      \else
        \def\!next{\!!!logten#1#2..\!nil}%
      \fi
    \fi
  \fi
  \!next}
\def\!!!logten#1#2.#3.#4\!nil{%
  \!dimenF=1pt 
  \if 0#1%
    \!!logshift#3pt 
  \else 
    \!logshift#2/
    \!dimenE=#1.#2#3pt 
  \fi 
  \ifdim \!dimenE<\!rootten
    \multiply \!dimenE 10 
    \advance  \!dimenF -1pt
  \fi
  \!dimenG=\!dimenE
    \advance\!dimenG 10pt
  \advance\!dimenE -10pt 
  \multiply\!dimenE 10 
  \!divide\!dimenE\!dimenG\!dimenE
  \!removept\!dimenE\!t
  \!dimenG=\!t\!dimenE
  \!removept\!dimenG\!tt
  \!dimenH=\!tt\!tenAe
    \divide\!dimenH 100
  \advance\!dimenH \!tenAc
  \!dimenH=\!tt\!dimenH
    \divide\!dimenH 100   
  \advance\!dimenH \!tenAa
  \!dimenH=\!t\!dimenH
    \divide\!dimenH 100 
  \advance\!dimenF \!dimenH}
\def\!logshift#1{%
  \if #1/%
    \def\!next{\ignorespaces}%
  \else
    \advance\!dimenF 1pt 
    \def\!next{\!logshift}%
  \fi 
  \!next}
 \def\!!logshift#1{%
   \advance\!dimenF -1pt
   \if 0#1%
     \def\!next{\!!logshift}%
   \else
     \if p#1%
       \!dimenF=1pt
       \def\!next{\!dimenE=1p}%
     \else
       \def\!next{\!dimenE=#1.}%
     \fi
   \fi
   \!next}
\def\beginpicture{%
  \setbox\!picbox=\hbox\bgroup%
  \!xleft=\maxdimen  
  \!xright=-\maxdimen
  \!ybot=\maxdimen
  \!ytop=-\maxdimen}
\def\endpicture{%
  \ifdim\!xleft=\maxdimen
    \!xleft=\!zpt \!xright=\!zpt \!ybot=\!zpt \!ytop=\!zpt 
  \fi
  \global\!Xleft=\!xleft \global\!Xright=\!xright
  \global\!Ybot=\!ybot \global\!Ytop=\!ytop
  \egroup%
  \ht\!picbox=\!Ytop  \dp\!picbox=-\!Ybot
  \ifdim\!Ybot>\!zpt
  \else 
    \ifdim\!Ytop<\!zpt
      \!Ybot=\!Ytop
    \else
      \!Ybot=\!zpt
    \fi
  \fi
  \hbox{\kern-\!Xleft\lower\!Ybot\box\!picbox\kern\!Xright}}
\def\endpicturesave <#1,#2>{%
  \endpicture \global #1=\!Xleft \global #2=\!Ybot \ignorespaces}
\def\setcoordinatesystem{%
  \!ifnextchar{u}{\!getlengths }
    {\!getlengths units <\!xunit,\!yunit>}}
\def\!getlengths units <#1,#2>{%
  \!xunit=#1\relax
  \!yunit=#2\relax
  \!ifcoordmode 
    \let\!SCnext=\!SCccheckforRP
  \else
    \let\!SCnext=\!SCdcheckforRP
  \fi
  \!SCnext}
\def\!SCccheckforRP{%
  \!ifnextchar{p}{\!cgetreference }
    {\!cgetreference point at {\!xref} {\!yref} }}
\def\!cgetreference point at #1 #2 {%
  \edef\!xref{#1}\edef\!yref{#2}%
  \!xorigin=\!xref\!xunit  \!yorigin=\!yref\!yunit  
  \!initinboundscheck 
  \ignorespaces}
\def\!SCdcheckforRP{%
  \!ifnextchar{p}{\!dgetreference}%
    {\ignorespaces}}
\def\!dgetreference point at #1 #2 {%
  \!xorigin=#1\relax  \!yorigin=#2\relax
  \ignorespaces}
\long\def\put#1#2 at #3 #4 {%
  \!setputobject{#1}{#2}%
  \!xpos=\!M{#3}\!xunit  \!ypos=\!M{#4}\!yunit  
  \!rotateaboutpivot\!xpos\!ypos%
  \advance\!xpos -\!xorigin  \advance\!xpos -\!xshift
  \advance\!ypos -\!yorigin  \advance\!ypos -\!yshift
  \kern\!xpos\raise\!ypos\box\!putobject\kern-\!xpos%
  \!doaccounting\ignorespaces}
\long\def\multiput #1#2 at {%
  \!setputobject{#1}{#2}%
  \!ifnextchar"{\!putfromfile}{\!multiput}}
\def\!putfromfile"#1"{%
  \expandafter\!multiput \input #1 /}
\def\!multiput{%
  \futurelet\!nextchar\!!multiput}
\def\!!multiput{%
  \if *\!nextchar
    \def\!nextput{\!alsoby}%
  \else
    \if /\!nextchar
      \def\!nextput{\!finishmultiput}%
    \else
      \def\!nextput{\!alsoat}%
    \fi
  \fi
  \!nextput}
\def\!finishmultiput/{%
  \setbox\!putobject=\hbox{}%
  \ignorespaces}
\def\!alsoat#1 #2 {%
  \!xpos=\!M{#1}\!xunit  \!ypos=\!M{#2}\!yunit  
  \!rotateaboutpivot\!xpos\!ypos%
  \advance\!xpos -\!xorigin  \advance\!xpos -\!xshift
  \advance\!ypos -\!yorigin  \advance\!ypos -\!yshift
  \kern\!xpos\raise\!ypos\copy\!putobject\kern-\!xpos%
  \!doaccounting
  \!multiput}
\def\!alsoby*#1 #2 #3 {%
  \!dxpos=\!M{#2}\!xunit \!dypos=\!M{#3}\!yunit 
  \!rotateonly\!dxpos\!dypos
  \!ntemp=#1%
  \!!loop\ifnum\!ntemp>0
    \advance\!xpos by \!dxpos  \advance\!ypos by \!dypos
    \kern\!xpos\raise\!ypos\copy\!putobject\kern-\!xpos%
    \advance\!ntemp by -1
  \repeat
  \!doaccounting 
  \!multiput}
\def\accountingon{\def\!doaccounting{\!!doaccounting}\ignorespaces}
\def\!!doaccounting{%
  \!xtemp=\!xpos  
  \!ytemp=\!ypos
  \ifdim\!xtemp<\!xleft 
     \!xleft=\!xtemp 
  \fi
  \advance\!xtemp by  \!wd 
  \ifdim\!xright<\!xtemp 
    \!xright=\!xtemp
  \fi
  \advance\!ytemp by -\!dp
  \ifdim\!ytemp<\!ybot  
    \!ybot=\!ytemp
  \fi
  \advance\!ytemp by  \!dp
  \advance\!ytemp by  \!ht 
  \ifdim\!ytemp>\!ytop  
    \!ytop=\!ytemp  
  \fi}
\long\def\!setputobject#1#2{%
  \setbox\!putobject=\hbox{#1}%
  \!ht=\ht\!putobject  \!dp=\dp\!putobject  \!wd=\wd\!putobject
  \wd\!putobject=\!zpt
  \!xshift=.5\!wd   \!yshift=.5\!ht   \advance\!yshift by -.5\!dp
  \edef\!putorientation{#2}%
  \expandafter\!SPOreadA\!putorientation[]\!nil%
  \expandafter\!SPOreadB\!putorientation<\!zpt,\!zpt>\!nil\ignorespaces}
\def\!SPOreadA#1[#2]#3\!nil{\!etfor\!orientation:=#2\do\!SPOreviseshift}
\def\!SPOreadB#1<#2,#3>#4\!nil{\advance\!xshift by -#2\advance\!yshift by -#3}
\def\!SPOreviseshift{%
  \if l\!orientation 
    \!xshift=\!zpt
  \else 
    \if r\!orientation 
      \!xshift=\!wd
    \else 
      \if b\!orientation
        \!yshift=-\!dp
      \else 
        \if B\!orientation 
          \!yshift=\!zpt
        \else 
          \if t\!orientation 
            \!yshift=\!ht
          \fi 
        \fi
      \fi
    \fi
  \fi}
\long\def\!dimenput#1#2(#3,#4){%
  \!setputobject{#1}{#2}%
  \!xpos=#3\advance\!xpos by -\!xshift
  \!ypos=#4\advance\!ypos by -\!yshift
  \kern\!xpos\raise\!ypos\box\!putobject\kern-\!xpos%
  \!doaccounting\ignorespaces}
\def\!setdimenmode{%
  \let\!M=\!M!!\ignorespaces}
\def\!setcoordmode{%
  \let\!M=\!M!\ignorespaces}
\def\!ifcoordmode{%
  \ifx \!M \!M!}
\def\!ifdimenmode{%
  \ifx \!M \!M!!}
\def\!M!#1#2{#1#2} 
\def\!M!!#1#2{#1}
\let\setdimensionmode=\!setdimenmode
\let\setcoordinatemode=\!setcoordmode
\def\!stack[#1]{%
  \let\!lglue=\hfill \let\!rglue=\hfill
  \expandafter\let\csname !#1glue\endcsname=\relax
  \!ifnextchar<{\!!stack}{\!!stack<\stackleading>}}
\def\!!stack<#1>#2{%
  \vbox{\def\!valueslist{}\!ecfor\!value:=#2\do{%
    \expandafter\!rightappend\!value\withCS{\\}\to\!valueslist}%
    \!lop\!valueslist\to\!value
    \let\\=\cr\lineskiplimit=\maxdimen\lineskip=#1%
    \baselineskip=-1000pt\halign{\!lglue##\!rglue\cr \!value\!valueslist\cr}}%
  \ignorespaces}
\def\!lines[#1]#2{%
  \let\!lglue=\hfill \let\!rglue=\hfill
  \expandafter\let\csname !#1glue\endcsname=\relax
  \vbox{\halign{\!lglue##\!rglue\cr #2\crcr}}%
  \ignorespaces}
\def\!Lines[#1]#2{%
  \let\!lglue=\hfill \let\!rglue=\hfill
  \expandafter\let\csname !#1glue\endcsname=\relax
  \vtop{\halign{\!lglue##\!rglue\cr #2\crcr}}%
  \ignorespaces}
\def\setplotsymbol(#1#2){%
  \!setputobject{#1}{#2}
  \setbox\!plotsymbol=\box\!putobject%
  \!plotsymbolxshift=\!xshift 
  \!plotsymbolyshift=\!yshift 
  \ignorespaces}
\font\fiverm=cmr5
\def\!!plot(#1,#2){%
  \!dimenA=-\!plotxorigin \advance \!dimenA by #1
  \!dimenB=-\!plotyorigin \advance \!dimenB by #2
  \kern\!dimenA\raise\!dimenB\copy\!plotsymbol\kern-\!dimenA%
  \ignorespaces}
\def\!!!plot(#1,#2){%
  \!dimenA=-\!plotxorigin \advance \!dimenA by #1
  \!dimenB=-\!plotyorigin \advance \!dimenB by #2
  \kern\!dimenA\raise\!dimenB\copy\!plotsymbol\kern-\!dimenA%
  \!countE=\!dimenA
  \!countF=\!dimenB
  \immediate\write\!replotfile{\the\!countE,\the\!countF.}%
  \ignorespaces}
\def\savelinesandcurves on "#1" {%
  \immediate\closeout\!replotfile
  \immediate\openout\!replotfile=#1%
  \let\!plot=\!!!plot}
\def\dontsavelinesandcurves {%
  \let\!plot=\!!plot}
\xdef\!Commentsignal{
\def\writesavefile#1 {%
  \immediate\write\!replotfile{\!Commentsignal #1}%
  \ignorespaces}

\def\replot"#1" {%
  \expandafter\!replot\input #1 /}
\def\!replot#1,#2. {%
  \!dimenA=#1sp
  \kern\!dimenA\raise#2sp\copy\!plotsymbol\kern-\!dimenA
  \futurelet\!nextchar\!!replot}
\def\!!replot{%
  \if /\!nextchar 
    \def\!next{\!finish}%
  \else
    \def\!next{\!replot}%
  \fi
  \!next}


 
 
\def\!Pythag#1#2#3{%
  \!dimenE=#1\relax                                     
  \ifdim\!dimenE<\!zpt 
    \!dimenE=-\!dimenE 
  \fi
  \!dimenF=#2\relax
  \ifdim\!dimenF<\!zpt 
    \!dimenF=-\!dimenF 
  \fi
  \advance \!dimenF by \!dimenE
  \ifdim\!dimenF=\!zpt 
    \!dimenG=\!zpt
  \else 
    \!divide{8\!dimenE}\!dimenF\!dimenE
    \advance\!dimenE by -4pt
      \!dimenE=2\!dimenE
    \!removept\!dimenE\!!t
    \!dimenE=\!!t\!dimenE
    \advance\!dimenE by 64pt
    \divide \!dimenE by 2
    \!dimenH=7pt
    \!!Pythag\!!Pythag\!!Pythag
    \!removept\!dimenH\!!t
    \!dimenG=\!!t\!dimenF
    \divide\!dimenG by 8
  \fi
  #3=\!dimenG
  \ignorespaces}

\def\!!Pythag{
  \!divide\!dimenE\!dimenH\!dimenI
  \advance\!dimenH by \!dimenI
    \divide\!dimenH by 2}

\def\placehypotenuse for <#1> and <#2> in <#3> {%
  \!Pythag{#1}{#2}{#3}}

 
 
 
\def\!qjoin (#1,#2) (#3,#4){%
  \advance\!intervalno by 1
  \!ifcoordmode
    \edef\!xmidpt{#1}\edef\!ymidpt{#2}%
  \else
    \!dimenA=#1\relax \edef\!xmidpt{\the\!dimenA}%
    \!dimenA=#2\relax \edef\!ymidpt{\the\!dimenA}%
  \fi
  \!xM=\!M{#1}\!xunit  \!yM=\!M{#2}\!yunit   \!rotateaboutpivot\!xM\!yM
  \!xE=\!M{#3}\!xunit  \!yE=\!M{#4}\!yunit   \!rotateaboutpivot\!xE\!yE
%
  \!dimenA=\!xM  \advance \!dimenA by -\!xS
  \!dimenB=\!xE  \advance \!dimenB by -\!xM
  \!xB=3\!dimenA \advance \!xB by -\!dimenB
  \!xC=2\!dimenB \advance \!xC by -2\!dimenA
%
  \!dimenA=\!yM  \advance \!dimenA by -\!yS%
  \!dimenB=\!yE  \advance \!dimenB by -\!yM%
  \!yB=3\!dimenA \advance \!yB by -\!dimenB%
  \!yC=2\!dimenB \advance \!yC by -2\!dimenA%
%
  \!xprime=\!xB  \!yprime=\!yB
  \!dxprime=.5\!xC  \!dyprime=.5\!yC
  \!getf \!midarclength=\!dimenA
  \!getf \advance \!midarclength by 4\!dimenA
  \!getf \advance \!midarclength by \!dimenA
  \divide \!midarclength by 12
%
  \!arclength=\!dimenA
  \!getf \advance \!arclength by 4\!dimenA
  \!getf \advance \!arclength by \!dimenA
  \divide \!arclength by 12
  \advance \!arclength by \!midarclength
  \global\advance \totalarclength by \!arclength
%
%
  \ifdim\!distacross>\!arclength 
    \advance \!distacross by -\!arclength
  \else
    \!initinverseinterp
    \loop\ifdim\!distacross<\!arclength
      \!inverseinterp
      \!xpos=\!t\!xC \advance\!xpos by \!xB
        \!xpos=\!t\!xpos \advance \!xpos by \!xS
      \!ypos=\!t\!yC \advance\!ypos by \!yB
        \!ypos=\!t\!ypos \advance \!ypos by \!yS
      \!plotifinbounds
      \advance\!distacross \plotsymbolspacing
      \!advancedashing
    \repeat  
    \advance \!distacross by -\!arclength
  \fi
  \!xS=\!xE
  \!yS=\!yE
  \ignorespaces}

\def\!getf{\!Pythag\!xprime\!yprime\!dimenA%
  \advance\!xprime by \!dxprime
  \advance\!yprime by \!dyprime}

\def\!initinverseinterp{%
  \ifdim\!arclength>\!zpt
    \!divide{8\!midarclength}\!arclength\!dimenE
    \ifdim\!dimenE<\!wmin \!setinverselinear
    \else 
      \ifdim\!dimenE>\!wmax \!setinverselinear
      \else
        \def\!inverseinterp{\!inversequad}\ignorespaces
%
%
         \!removept\!dimenE\!Ew
         \!dimenF=-\!Ew\!dimenE
         \advance\!dimenF by 32pt
         \!dimenG=8pt 
         \advance\!dimenG by -\!dimenE
         \!dimenG=\!Ew\!dimenG
         \!divide\!dimenF\!dimenG\!beta
         \!gamma=1pt
         \advance \!gamma by -\!beta
      \fi
    \fi
  \fi
  \ignorespaces}

\def\!inversequad{%
  \!divide\!distacross\!arclength\!dimenG
  \!removept\!dimenG\!v
  \!dimenG=\!v\!gamma
  \advance\!dimenG by \!beta
  \!dimenG=\!v\!dimenG
  \!removept\!dimenG\!t}

\def\!setinverselinear{%
  \def\!inverseinterp{\!inverselinear}%
  \divide\!dimenE by 8 \!removept\!dimenE\!t
  \!countC=\!intervalno \multiply \!countC 2
  \!countB=\!countC     \advance \!countB -1
  \!countA=\!countB     \advance \!countA -1
  \wlog{\the\!countB th point (\!xmidpt,\!ymidpt) being plotted 
    doesn't lie in the}%
  \wlog{ middle third of the arc between the \the\!countA th 
    and \the\!countC th points:}%
  \wlog{ [arc length \the\!countA\space to \the\!countB]/[arc length 
    \the \!countA\space to \the\!countC]=\!t.}%
  \ignorespaces}
 
\def\!inverselinear{%
  \!divide\!distacross\!arclength\!dimenG
  \!removept\!dimenG\!t}

 

\def\startrotation{%
  \let\!rotateaboutpivot=\!!rotateaboutpivot
  \let\!rotateonly=\!!rotateonly
  \!ifnextchar{b}{\!getsincos }%
    {\!getsincos by {\!cosrotationangle} {\!sinrotationangle} }}
\def\!getsincos by #1 #2 {%
  \edef\!cosrotationangle{#1}%
  \edef\!sinrotationangle{#2}%
  \!ifcoordmode 
    \let\!ROnext=\!ccheckforpivot
  \else
    \let\!ROnext=\!dcheckforpivot
  \fi
  \!ROnext}
\def\!ccheckforpivot{%
  \!ifnextchar{a}{\!cgetpivot}%
    {\!cgetpivot about {\!xpivotcoord} {\!ypivotcoord} }}
\def\!cgetpivot about #1 #2 {%
  \edef\!xpivotcoord{#1}%
  \edef\!ypivotcoord{#2}%
  \!xpivot=#1\!xunit  \!ypivot=#2\!yunit
  \ignorespaces}
\def\!dcheckforpivot{%
  \!ifnextchar{a}{\!dgetpivot}{\ignorespaces}}
\def\!dgetpivot about #1 #2 {%
  \!xpivot=#1\relax  \!ypivot=#2\relax
  \ignorespaces}

\def\stoprotation{%
  \let\!rotateaboutpivot=\!!!rotateaboutpivot
  \let\!rotateonly=\!!!rotateonly
  \ignorespaces}
 
\def\!!rotateaboutpivot#1#2{%
  \!dimenA=#1\relax  \advance\!dimenA -\!xpivot
  \!dimenB=#2\relax  \advance\!dimenB -\!ypivot
  \!dimenC=\!cosrotationangle\!dimenA
    \advance \!dimenC -\!sinrotationangle\!dimenB
  \!dimenD=\!cosrotationangle\!dimenB
    \advance \!dimenD  \!sinrotationangle\!dimenA
  \advance\!dimenC \!xpivot  \advance\!dimenD \!ypivot
  #1=\!dimenC  #2=\!dimenD
  \ignorespaces}

\def\!!rotateonly#1#2{%
  \!dimenA=#1\relax  \!dimenB=#2\relax 
  \!dimenC=\!cosrotationangle\!dimenA
    \advance \!dimenC -\!rotsign\!sinrotationangle\!dimenB
  \!dimenD=\!cosrotationangle\!dimenB
    \advance \!dimenD  \!rotsign\!sinrotationangle\!dimenA
  #1=\!dimenC  #2=\!dimenD
  \ignorespaces}
\def\!rotsign{}
\def\!!!rotateaboutpivot#1#2{\relax}
\def\!!!rotateonly#1#2{\relax}
\stoprotation

\def\!reverserotateonly#1#2{%
  \def\!rotsign{-}%
  \!rotateonly{#1}{#2}%
  \def\!rotsign{}%
  \ignorespaces}

\def\!getspan span <#1>{%
  \!dshade=#1\relax
  \!ifcoordmode 
    \let\!GRnext=\!GRccheckforAP
  \else
    \let\!GRnext=\!GRdcheckforAP
  \fi
  \!GRnext}
\def\!GRccheckforAP{%
  \!ifnextchar{p}{\!cgetanchor }
    {\!cgetanchor point at {\!xshadesave} {\!yshadesave} }}
\def\!cgetanchor point at #1 #2 {%
  \edef\!xshadesave{#1}\edef\!yshadesave{#2}%
  \!xshade=\!xshadesave\!xunit  \!yshade=\!yshadesave\!yunit
  \ignorespaces}
\def\!GRdcheckforAP{%
  \!ifnextchar{p}{\!dgetanchor}%
    {\ignorespaces}}
\def\!dgetanchor point at #1 #2 {%
  \!xshade=#1\relax  \!yshade=#2\relax
  \ignorespaces}

\def\setshadesymbol{%
  \!ifnextchar<{\!setshadesymbol}{\!setshadesymbol<,,,> }}

\def\!setshadesymbol <#1,#2,#3,#4> (#5#6){%
  \!setputobject{#5}{#6}%
  \setbox\!shadesymbol=\box\!putobject%
  \!shadesymbolxshift=\!xshift \!shadesymbolyshift=\!yshift
%
  \!dimenA=\!xshift \advance\!dimenA \!smidge
  \!override\!dimenA{#1}\!lshrinkage%
  \!dimenA=\!wd \advance \!dimenA -\!xshift
    \advance\!dimenA \!smidge
    \!override\!dimenA{#2}\!rshrinkage
  \!dimenA=\!dp \advance \!dimenA \!yshift
    \advance\!dimenA \!smidge
    \!override\!dimenA{#3}\!bshrinkage
  \!dimenA=\!ht \advance \!dimenA -\!yshift
    \advance\!dimenA \!smidge
    \!override\!dimenA{#4}\!tshrinkage
  \ignorespaces}
\def\!smidge{-.2pt}%

\def\!override#1#2#3{%
  \edef\!!override{#2}%
  \ifx \!!override\empty
    #3=#1\relax
  \else
    \if z\!!override
      #3=\!zpt
    \else
      \ifx \!!override\!blankz
        #3=\!zpt
      \else
        #3=#2\relax
      \fi
    \fi
  \fi
  \ignorespaces}
\def\!blankz{ z}

\setshadesymbol ({\fiverm .})

\def\!startvshade#1(#2,#3,#4){%
  \let\!!xunit=\!xunit%
  \let\!!yunit=\!yunit%
  \let\!!xshade=\!xshade%
  \let\!!yshade=\!yshade%
  \def\!getshrinkages{\!vgetshrinkages}%
  \let\!setshadelocation=\!vsetshadelocation%
  \!xS=\!M{#2}\!!xunit
  \!ybS=\!M{#3}\!!yunit
  \!ytS=\!M{#4}\!!yunit
  \!shadexorigin=\!xorigin  \advance \!shadexorigin \!shadesymbolxshift
  \!shadeyorigin=\!yorigin  \advance \!shadeyorigin \!shadesymbolyshift
  \ignorespaces}
 
\def\!starthshade#1(#2,#3,#4){%
  \let\!!xunit=\!yunit%
  \let\!!yunit=\!xunit%
  \let\!!xshade=\!yshade%
  \let\!!yshade=\!xshade%
  \def\!getshrinkages{\!hgetshrinkages}%
  \let\!setshadelocation=\!hsetshadelocation%
  \!xS=\!M{#2}\!!xunit
  \!ybS=\!M{#3}\!!yunit
  \!ytS=\!M{#4}\!!yunit
  \!shadexorigin=\!xorigin  \advance \!shadexorigin \!shadesymbolxshift
  \!shadeyorigin=\!yorigin  \advance \!shadeyorigin \!shadesymbolyshift
  \ignorespaces}

\def\!lattice#1#2#3#4#5{%
  \!dimenA=#1
  \!dimenB=#2
  \!countB=\!dimenB
%
  \!dimenC=#3
  \advance\!dimenC -\!dimenA
  \!countA=\!dimenC
  \divide\!countA \!countB
  \ifdim\!dimenC>\!zpt
    \!dimenD=\!countA\!dimenB
    \ifdim\!dimenD<\!dimenC
      \advance\!countA 1 
    \fi
  \fi
  \!dimenC=\!countA\!dimenB
    \advance\!dimenC \!dimenA
  #4=\!countA
  #5=\!dimenC
  \ignorespaces}

\def\!qshade#1(#2,#3,#4)#5(#6,#7,#8){%
  \!xM=\!M{#2}\!!xunit
  \!ybM=\!M{#3}\!!yunit
  \!ytM=\!M{#4}\!!yunit
  \!xE=\!M{#6}\!!xunit
  \!ybE=\!M{#7}\!!yunit
  \!ytE=\!M{#8}\!!yunit
  \!getcoeffs\!xS\!ybS\!xM\!ybM\!xE\!ybE\!ybB\!ybC
  \!getcoeffs\!xS\!ytS\!xM\!ytM\!xE\!ytE\!ytB\!ytC
  \def\!getylimits{\!qgetylimits}%
  \!shade{#1}\ignorespaces}
 
\def\!lshade#1(#2,#3,#4){%
  \!xE=\!M{#2}\!!xunit
  \!ybE=\!M{#3}\!!yunit
  \!ytE=\!M{#4}\!!yunit
  \!dimenE=\!xE  \advance \!dimenE -\!xS
  \!dimenC=\!ytE \advance \!dimenC -\!ytS
  \!divide\!dimenC\!dimenE\!ytB
  \!dimenC=\!ybE \advance \!dimenC -\!ybS
  \!divide\!dimenC\!dimenE\!ybB
  \def\!getylimits{\!lgetylimits}%
  \!shade{#1}\ignorespaces}
 
\def\!getcoeffs#1#2#3#4#5#6#7#8{%
  \!dimenC=#4\advance \!dimenC -#2
  \!dimenE=#3\advance \!dimenE -#1
  \!divide\!dimenC\!dimenE\!dimenF
  \!dimenC=#6\advance \!dimenC -#4
  \!dimenH=#5\advance \!dimenH -#3
  \!divide\!dimenC\!dimenH\!dimenG
  \advance\!dimenG -\!dimenF
  \advance \!dimenH \!dimenE
  \!divide\!dimenG\!dimenH#8
  \!removept#8\!t
  #7=-\!t\!dimenE
  \advance #7\!dimenF
  \ignorespaces}

\def\!shade#1{%
  \!getshrinkages#1<,,,>\!nil
  \advance \!dimenE \!xS
  \!lattice\!!xshade\!dshade\!dimenE
    \!parity\!xpos
  \!dimenF=-\!dimenF
    \advance\!dimenF \!xE
  \!loop\!not{\ifdim\!xpos>\!dimenF}
    \!shadecolumn%
    \advance\!xpos \!dshade
    \advance\!parity 1
  \repeat
  \!xS=\!xE
  \!ybS=\!ybE
  \!ytS=\!ytE
  \ignorespaces}

\def\!vgetshrinkages#1<#2,#3,#4,#5>#6\!nil{%
  \!override\!lshrinkage{#2}\!dimenE
  \!override\!rshrinkage{#3}\!dimenF
  \!override\!bshrinkage{#4}\!dimenG
  \!override\!tshrinkage{#5}\!dimenH
  \ignorespaces}
\def\!hgetshrinkages#1<#2,#3,#4,#5>#6\!nil{%
  \!override\!lshrinkage{#2}\!dimenG
  \!override\!rshrinkage{#3}\!dimenH
  \!override\!bshrinkage{#4}\!dimenE
  \!override\!tshrinkage{#5}\!dimenF
  \ignorespaces}

\def\!shadecolumn{%
  \!dxpos=\!xpos
  \advance\!dxpos -\!xS
  \!removept\!dxpos\!dx
  \!getylimits
  \advance\!ytpos -\!dimenH
  \advance\!ybpos \!dimenG
  \!yloc=\!!yshade
  \ifodd\!parity 
     \advance\!yloc \!dshade
  \fi
  \!lattice\!yloc{2\!dshade}\!ybpos%
    \!countA\!ypos
  \!dimenA=-\!shadexorigin \advance \!dimenA \!xpos
  \loop\!not{\ifdim\!ypos>\!ytpos}
    \!setshadelocation
    \!rotateaboutpivot\!xloc\!yloc%
    \!dimenA=-\!shadexorigin \advance \!dimenA \!xloc
    \!dimenB=-\!shadeyorigin \advance \!dimenB \!yloc
    \kern\!dimenA \raise\!dimenB\copy\!shadesymbol \kern-\!dimenA
    \advance\!ypos 2\!dshade
  \repeat
  \ignorespaces}
 
\def\!qgetylimits{%
  \!dimenA=\!dx\!ytC              
  \advance\!dimenA \!ytB
  \!ytpos=\!dx\!dimenA
  \advance\!ytpos \!ytS
  \!dimenA=\!dx\!ybC              
  \advance\!dimenA \!ybB
  \!ybpos=\!dx\!dimenA
  \advance\!ybpos \!ybS}
 
\def\!lgetylimits{%
  \!ytpos=\!dx\!ytB
  \advance\!ytpos \!ytS
  \!ybpos=\!dx\!ybB
  \advance\!ybpos \!ybS}
 
\def\!vsetshadelocation{
  \!xloc=\!xpos
  \!yloc=\!ypos}
\def\!hsetshadelocation{
  \!xloc=\!ypos
  \!yloc=\!xpos}





\def\!axisticks {%
  \def\!nextkeyword##1 {%
    \expandafter\ifx\csname !ticks##1\endcsname \relax
      \def\!next{\!fixkeyword{##1}}%
    \else
      \def\!next{\csname !ticks##1\endcsname}%
    \fi
    \!next}%
  \!axissetup
    \def\!axissetup{\relax}%
  \edef\!ticksinoutsign{\!ticksinoutSign}%
  \!ticklength=\longticklength
  \!tickwidth=\linethickness
  \!gridlinestatus
  \!setticktransform
  \!maketick
  \!tickcase=0
  \def\!LTlist{}%
  \!nextkeyword}

\def\ticksout{%
  \def\!ticksinoutSign{+}}

\ticksout

\def\nogridlines{%
  \def\!gridlinestatus{\!gridlinestoofalse}}
\nogridlines

\def\loggedticks{%
  \def\!setticktransform{\let\!ticktransform=\!logten}}
\def\unloggedticks{%
  \def\!setticktransform{\let\!ticktransform=\!donothing}}
\def\!donothing#1#2{\def#2{#1}}
\unloggedticks

\expandafter\def\csname !ticks/\endcsname{%
  \!not {\ifx \!LTlist\empty}
    \!placetickvalues
  \fi
  \def\!tickvalueslist{}%
  \def\!LTlist{}%
  \expandafter\csname !axis/\endcsname}

\def\!maketick{%
  \setbox\!boxA=\hbox{%
    \beginpicture
      \!setdimenmode
      \setcoordinatesystem point at {\!zpt} {\!zpt}   
      \linethickness=\!tickwidth
      \ifdim\!ticklength>\!zpt
        \putrule from {\!zpt} {\!zpt} to
          {\!ticksinoutsign\!tickxsign\!ticklength}
          {\!ticksinoutsign\!tickysign\!ticklength}
      \fi
      \if!gridlinestoo
        \putrule from {\!zpt} {\!zpt} to
          {-\!tickxsign\!xaxislength} {-\!tickysign\!yaxislength}
      \fi
    \endpicturesave <\!Xsave,\!Ysave>}%
    \wd\!boxA=\!zpt}
  
\def\!ticksin{%
  \def\!ticksinoutsign{-}%
  \!maketick
  \!nextkeyword}

\def\!ticksout{%
  \def\!ticksinoutsign{+}%
  \!maketick
  \!nextkeyword}

\def\!tickslength<#1> {%
  \!ticklength=#1\relax
  \!maketick
  \!nextkeyword}

\def\!tickslong{%
  \!tickslength<\longticklength> }

\def\!ticksshort{%
  \!tickslength<\shortticklength> }

\def\!tickswidth<#1> {%
  \!tickwidth=#1\relax
  \!maketick
  \!nextkeyword}

\def\!ticksandacross{%
  \!gridlinestootrue
  \!maketick
  \!nextkeyword}

\def\!ticksbutnotacross{%
  \!gridlinestoofalse
  \!maketick
  \!nextkeyword}

\def\!tickslogged{%
  \let\!ticktransform=\!logten
  \!nextkeyword}

\def\!ticksunlogged{%
  \let\!ticktransform=\!donothing
  \!nextkeyword}

\def\!ticksunlabeled{%
  \!tickcase=0
  \!nextkeyword}

\def\!ticksnumbered{%
  \!tickcase=1
  \!nextkeyword}

\def\!tickswithvalues#1/ {%
  \edef\!tickvalueslist{#1! /}%
  \!tickcase=2
  \!nextkeyword}

\def\!ticksquantity#1 {%
  \ifnum #1>1
    \!updatetickoffset
    \!countA=#1\relax
    \advance \!countA -1
    \!ticklocationincr=\!axisLength
      \divide \!ticklocationincr \!countA
    \!ticklocation=\!axisstart
    \loop \!not{\ifdim \!ticklocation>\!axisend}
      \!placetick\!ticklocation
      \ifcase\!tickcase
          \relax 
        \or
          \relax 
        \or
          \expandafter\!gettickvaluefrom\!tickvalueslist
          \edef\!tickfield{{\the\!ticklocation}{\!value}}%
          \expandafter\!listaddon\expandafter{\!tickfield}\!LTlist%
      \fi
      \advance \!ticklocation \!ticklocationincr
    \repeat
  \fi
  \!nextkeyword}

\def\!ticksat#1 {%
  \!updatetickoffset
  \edef\!Loc{#1}%
  \if /\!Loc
    \def\next{\!nextkeyword}%
  \else
    \!ticksincommon
    \def\next{\!ticksat}%
  \fi
  \next}    
      
\def\!ticksfrom#1 to #2 by #3 {%
  \!updatetickoffset
  \edef\!arg{#3}%
  \expandafter\!separate\!arg\!nil
  \!scalefactor=1
  \expandafter\!countfigures\!arg/
  \edef\!arg{#1}%
  \!scaleup\!arg by\!scalefactor to\!countE
  \edef\!arg{#2}%
  \!scaleup\!arg by\!scalefactor to\!countF
  \edef\!arg{#3}%
  \!scaleup\!arg by\!scalefactor to\!countG
  \loop \!not{\ifnum\!countE>\!countF}
    \ifnum\!scalefactor=1
      \edef\!Loc{\the\!countE}%
    \else
      \!scaledown\!countE by\!scalefactor to\!Loc
    \fi
    \!ticksincommon
    \advance \!countE \!countG
  \repeat
  \!nextkeyword}

\def\!updatetickoffset{%
  \!dimenA=\!ticksinoutsign\!ticklength
  \ifdim \!dimenA>\!offset
    \!offset=\!dimenA
  \fi}

\def\!placetick#1{%
  \if!xswitch
    \!xpos=#1\relax
    \!ypos=\!axisylevel
  \else
    \!xpos=\!axisxlevel
    \!ypos=#1\relax
  \fi
  \advance\!xpos \!Xsave
  \advance\!ypos \!Ysave
  \kern\!xpos\raise\!ypos\copy\!boxA\kern-\!xpos
  \ignorespaces}

\def\!gettickvaluefrom#1 #2 /{%
  \edef\!value{#1}%
  \edef\!tickvalueslist{#2 /}%
  \ifx \!tickvalueslist\!endtickvaluelist
    \!tickcase=0
  \fi}
\def\!endtickvaluelist{! /}

\def\!ticksincommon{%
  \!ticktransform\!Loc\!t
  \!ticklocation=\!t\!!unit
  \advance\!ticklocation -\!!origin
  \!placetick\!ticklocation
  \ifcase\!tickcase
    \relax 
  \or 
    \ifdim\!ticklocation<-\!!origin
      \edef\!Loc{$\!Loc$}%
    \fi
    \edef\!tickfield{{\the\!ticklocation}{\!Loc}}%
    \expandafter\!listaddon\expandafter{\!tickfield}\!LTlist%
  \or 
    \expandafter\!gettickvaluefrom\!tickvalueslist
    \edef\!tickfield{{\the\!ticklocation}{\!value}}%
    \expandafter\!listaddon\expandafter{\!tickfield}\!LTlist%
  \fi}

\def\!separate#1\!nil{%
  \!ifnextchar{-}{\!!separate}{\!!!separate}#1\!nil}
\def\!!separate-#1\!nil{%
  \def\!sign{-}%
  \!!!!separate#1..\!nil}
\def\!!!separate#1\!nil{%
  \def\!sign{+}%
  \!!!!separate#1..\!nil}
\def\!!!!separate#1.#2.#3\!nil{%
  \def\!arg{#1}%
  \ifx\!arg\!empty
    \!countA=0
  \else
    \!countA=\!arg
  \fi
  \def\!arg{#2}%
  \ifx\!arg\!empty
    \!countB=0
  \else
    \!countB=\!arg
  \fi}
 
\def\!countfigures#1{%
  \if #1/%
    \def\!next{\ignorespaces}%
  \else
    \multiply\!scalefactor 10
    \def\!next{\!countfigures}%
  \fi
  \!next}

\def\!scaleup#1by#2to#3{%
  \expandafter\!separate#1\!nil
  \multiply\!countA #2\relax
  \advance\!countA \!countB
  \if -\!sign
    \!countA=-\!countA
  \fi
  #3=\!countA
  \ignorespaces}

\def\!scaledown#1by#2to#3{%
  \!countA=#1\relax
  \ifnum \!countA<0 
    \def\!sign{-}
    \!countA=-\!countA
  \else
    \def\!sign{}%
  \fi
  \!countB=\!countA
  \divide\!countB #2\relax
  \!countC=\!countB
    \multiply\!countC #2\relax
  \advance \!countA -\!countC
  \edef#3{\!sign\the\!countB.}
  \!countC=\!countA 
  \ifnum\!countC=0 
    \!countC=1
  \fi
  \multiply\!countC 10
  \!loop \ifnum #2>\!countC
    \edef#3{#3\!zero}%
    \multiply\!countC 10
  \repeat
  \edef#3{#3\the\!countA}
  \ignorespaces}

\def\!placetickvalues{%
  \advance\!offset \tickstovaluesleading
  \if!xswitch
    \setbox\!boxA=\hbox{%
      \def\\##1##2{%
        \!dimenput {##2} [B] (##1,\!axisylevel)}%
      \beginpicture 
        \!LTlist
      \endpicturesave <\!Xsave,\!Ysave>}%
    \!dimenA=\!axisylevel
      \advance\!dimenA -\!Ysave
      \advance\!dimenA \!tickysign\!offset
      \if -\!tickysign
        \advance\!dimenA -\ht\!boxA
      \else
        \advance\!dimenA  \dp\!boxA
      \fi
    \advance\!offset \ht\!boxA 
      \advance\!offset \dp\!boxA
    \!dimenput {\box\!boxA} [Bl] <\!Xsave,\!Ysave> (\!zpt,\!dimenA)
  \else
    \setbox\!boxA=\hbox{%
      \def\\##1##2{%
        \!dimenput {##2} [r] (\!axisxlevel,##1)}%
      \beginpicture 
        \!LTlist
      \endpicturesave <\!Xsave,\!Ysave>}%
    \!dimenA=\!axisxlevel
      \advance\!dimenA -\!Xsave
      \advance\!dimenA \!tickxsign\!offset
      \if -\!tickxsign
        \advance\!dimenA -\wd\!boxA
      \fi
    \advance\!offset \wd\!boxA
    \!dimenput {\box\!boxA} [Bl] <\!Xsave,\!Ysave> (\!dimenA,\!zpt)
  \fi}

\normalgraphs
\catcode`!=12 


 
\catcode`@=11 \catcode`!=11
  
\let\!pictexendpicture=\endpicture 
\let\!pictexframe=\frame
\let\!pictexlinethickness=\linethickness
\let\!pictexmultiput=\multiput
\let\!pictexput=\put

\def\beginpicture{%
  \setbox\!picbox=\hbox\bgroup%
  \let\endpicture=\!pictexendpicture
  \let\frame=\!pictexframe
  \let\linethickness=\!pictexlinethickness
  \let\multiput=\!pictexmultiput
  \let\put=\!pictexput
  \let\input=\@@input   
  \!xleft=\maxdimen  
  \!xright=-\maxdimen
  \!ybot=\maxdimen
  \!ytop=-\maxdimen}

\let\frame=\!latexframe

\let\pictexframe=\!pictexframe

\let\linethickness=\!latexlinethickness
\let\pictexlinethickness=\!pictexlinethickness

\let\\=\@normalcr
\catcode`@=12 \catcode`!=12


\setcounter{totalnumber}{50}
\setcounter{topnumber}{50}
\setcounter{bottomnumber}{50}


\def\text#1{\mbox{#1}}

\newtheorem {theo}{Theorem}[section]

\newtheorem {prop}{Proposition}[section]

\newtheorem {cor}{Corollary}[section]
\newenvironment{proofof}[1]{\medskip 
\noindent{\it Proof of #1.}}{ \hfill\qed\\ }

\renewcommand{\rho}{\varrho}
\newcommand\st{\,\,;\,\,}

\newcommand\dist{\mbox{\rm dist}}

\newcommand\rz{{\mathbb R}}
\newcommand\R{{\mathbb R}}
\newcommand\Q{{\mathbb Q}}
\newcommand\zz{{\mathbb Z}}
\newcommand\nz{{\mathbb N}}

\newcommand\vvert{{\vert \vert }}

\setlength{\textwidth}{5.8in}

\begin{document}
\title{Fictitious Play in $3\times 3$ Games: chaos and dithering behaviour}

\author{Sebastian van Strien\footnote{strien@maths.warwick.ac.uk}\,\,  and Colin Sparrow\footnote{c.sparrow@warwick.ac.uk}} 

\maketitle

\begin{abstract}
{In the 60's Shapley provided an example of a
two player fictitious game with  periodic behaviour.
In this game,  player $A$ aims to copy $B$'s behaviour
and player $B$ aims to play one ahead  of player $A$.
In this paper we continue to study a family of 
games which generalize Shapley's example by introducing
an external parameter, and prove that there exists an
abundance of periodic and chaotic behavior with players
dithering between different strategies. The reason for all this,
is that there exists a periodic orbit (consisting of playing mixed strategies)
which is of {\em `jitter type'}: such an orbit is neither attracting, repelling or of saddle type
as nearby orbits jitter closer and further away from it in a manner which is
reminiscent of a random walk motion. 
We prove that this behaviour holds for an open set of games.} 
\end{abstract}


\section{Introduction}

The purpose of this paper is to show how complicated the dynamics of 
fictitious play can be (for an interpretation of fictitious play as a model for rational learning,
see for example Fudenberg and Levine \cite{Fudenberg-Levine98}). We do this by 
analysing in detail the following family of  $3\times 3$ games determined by the matrices 
\begin{equation}
A_\beta= \left(\begin{array}{ccc} 1 & 0 & \beta \\ \beta & 1 & 0 \\ 0 & \beta & 1
\end{array} \right) \quad
B_\beta=\left(\begin{array}{ccc}  -\beta & 1 & 0 \\ 0 & -\beta & 1 \\ 1 & 0 & -\beta
\end{array} \right),
\label{eqn:AB}
\end{equation}
which depend
on a parameter $\beta\in (0,1)$ (and best response 
dynamics given by the differential inclusion (\ref{eq1})).
In fact,  we shall show in Theorem~\ref{thm:robustness}
that our results even hold for  matrices $A,B$ with 
$$||A-A_\beta||,||B-B_\beta|| \le \epsilon \mbox{ with }\epsilon>0 \mbox{ small}.$$
However, except for Theorem~\ref{thm:robustness} and Section~\ref{sec:robustness}, we shall 
simply write $A=A_\beta$ and $B=B_\beta$.
As usual, player $A$ has utility $p^A A p^B$ whereas player
$B$ has utility $p^A B p^B$ where 
the row vector 
$p^A\in \Sigma_A\subset \rz^3$ denoted the position of player $A$ and the column
vector $p^B\in \Sigma_B\subset \rz^3$ the position of player $B$.
For later use we write $v^A=Ap^B$ and $v^B=p^A B$.
Here $\Sigma_A,\Sigma_B$ are the set of probability vectors in $\rz^3$.
In other words, $\Sigma:=\Sigma_A\times \Sigma_B$ is the product of two-dimensional 
triangles and so topologically it is a ball in $\rz^4$. Player $B$ (resp. $A$) is indifferent 
between all three strategies when $p^A=E^A$, $p^B=E^B$ 
and $E=(E^A,E^B)$ is the Nash equilibrium of the game. 
For the game $A=A_\beta$, $B=B_\beta$ one has
$E^A:=(1/3,1/3,1/3)$ and $E^B:=(1/3,1/3,1/3)'$.

For $\beta=0$ the game $A,B$ is equivalent to the classical example introduced by Shapley
\cite{Shapley64} (where each of the players eventually chooses strategies 
periodically). 
For $\beta=(\sqrt{5}-1)/2\approx 0.618$, the game is equivalent to a zero-sume game
(rescaling $B$ to $\tilde B=\sigma(B-1)$ gives $A+\tilde B=0$), so then \cite{Robinson51}
play always converges to the interior equilibrium $E^A,E^B$.
 
The best response $BR_A(p^B)$ of player $A$ is the $i$-th unit vector
if the $i$-th component of $v^A$ is larger than the other components of $v^A$
(if several components of $v^A$ are equally large, then $BR_A(p^B)$
is the convex combination of unit vectors corresponding to 
the largest components of $v^A$). Define $BR_B(p^A)$ similarly.
Once best responses are selected, the dynamics is determined by moving
in a straight line towards
the best responses. In some of the literature this is done by taking the 
piecewise linear differential equation
\begin{equation}
\begin{array}{ll}
       {dp^A}/{dt} & =  BR_A(p^B) - p^A \\
       {dp^B}/{dt} & =  BR_B(p^A) - p^B \end{array}
\label{eq1}\end{equation}
whereas others take 
\begin{equation}
\begin{array}{ll}
       {dp^A}/{ds} & = \,\,  (1/s)  \, (BR_A(p^B) - p^A) \\
       {dp^B}/{ds} & = \,\, (1/s)  \, (BR_B(p^A) - p^B) . \end{array}
\label{eq2}\end{equation}
The orbits are the same in both cases, only the time parametrisation of the orbits
differs (take $s=e^t$);  for the latter, orbits slow down and a periodic orbit of period $T$ 
(\ref{eq1})  corresponds to an orbit of (\ref{eq2}) which returns in time 
$e^T,e^{2T},e^{3T},\dots$. 
Equations (\ref{eq1}) and (\ref{eq2}) determine the dynamics up {\em until such time}
as one or other (or both) players become {\em indifferent} 
between two (or more) pure strategies. When one or more of the players is 
indifferent between two strategies their dynamics may not be uniquely determined.
So (\ref{eq1}) and (\ref{eq2}) are in fact differential inclusions rather than differential equations,
but as the best response correspondences 
$p^B\mapsto BR_A(p^B)$ and $p^A\mapsto BR_B(p^A)$
are upper semicontinous with values closed, convex sets, 
it follows from Aubin and Cellina \cite[Chapter 2.1]{AubinCellina84}
that through each initial value there exists at least one solution
which is Lipschitz continuous and defined for all positive 
time.  It is shown in Hofbauer \cite{Hofbauer95}  that, under mild regularity 
conditions (which are satisfied in our case), any solution is piecewise linear.

In fact,  when the matrices (\ref{eqn:AB}) are chosen, play is not
affected at all by this ambiguity except at $E$ 
(because a certain transversality condition
is satisfied, see  Sparrow \cite{SSH2008}).  In other words, all the orbits  
(except the one through $E$) are  uniquely determined (outside $E$, the dynamics is not affected
by a choice of tie-breaking rule). Moreover,  when $\beta\in (0,1)$
the flow is continuous except at $E$ (when $\beta\ne \sigma$, 
for a proof,  see Sparrow et al \cite{SSH2008}).

Note that the best response of $A$ to any $p^B\ne E^B$ is either an integer
$i\in \{1,2,3\}$ or a mixed strategy set $\bar i$ where $\bar i:=\{1,2,3\}\setminus \{i\}$
corresponding to where player $A$ is indifferent between two strategies but will {\bf not} play $i$.
Similarly for $B$. Hence one can associate to any orbit $(p^A(t),p^B(t)$ outside $E$, 
a sequence of times $t_0:=0<t_1<t_2<\dots$ and a sequence of best-response strategies 
$(i_0,j_0),(i_1,j_i),(i_2,j_2),\dots$ where 
$$(i_n,j_n)=(BR^A(p^B(t),BR^B(p^A(t))\mbox{ for }t\in (t_n,t_{n+1})$$
with  $i_n$ and $j_n$ equal to $1$, $2$, $3$, $\bar 1$, $\bar 2$ or $\bar 3$ for each $n=0,1,2,\dots$.
In Sparrow et al \cite{SSH2008} we showed that there are three periodic 
orbits: one for $\beta\in (0,\sigma)$ with play $(1,2),(2,2)$, $(2,3)$, $(3,3)$, $(3,1)$, $(1,1)$
(the Shapley orbit), one for $\beta\in (\sigma,1)$ 
with cyclic play $(1,3),(1,2), (3,2),(3,1)$, $(2,1),(2,3)$
and a third one with a period 6 orbit of mixed strategies 
$(\bar 1, \bar 1), (\bar 1, \bar 2), (\bar 2, \bar 2), (\bar 2, \bar 3), (\bar 3, \bar 3), (\bar 3,\bar 1)$.
The latter sequence of strategies correspond to a   fully-invariant set $C(\Gamma)$ 
(so an orbit starting in this set  remains in this set, and an orbit starting outside this 
set remains outside this set); this fully invariant set exists for each $\beta\in (0,1)$
and   contains a periodic orbit when $\beta\in (\sigma,1)$.
The latter orbit is of {\em `jitter type'}: it is neither attracting, repelling or of saddle type;
instead  nearby orbits jitter closer and further away from it in a manner which is
reminiscent of a random walk motion. We describe this behavior in the final sections
of this paper.

\subsection{Abundance of Periodic Play}
Let us state now the main results of this paper.
To do this, let us say that an orbit of the game has {\em cyclic play of period $n$} if the associated
sequence $(i_n,j_n)$ is periodic: $(i_{k+n},j_{k+n})=(i_k,i_k)$ for all $k=0,1,\dots$.
Given  $k\in \{0,\dots,n-1\}$ we say that the players are {\em indecisive at the $k$-th step} 
if $k\ge 3$ and moreover 
$$\{i_{k-3},i_{k-2},j_{k-1},i_{k}\}\ne \{1,2,3\}
\mbox{ {\bf and } }\{j_{k-3},j_{k-2},j_{k-1},j_{k}\}\ne \{1,2,3\}$$
holds (so during this and the previous three moves, both players never deviated from a choice of two strategies). 
Sometimes we also will say that the players {\em dither} at the $k$-step.
In the opposite case, we say the $k$-th step is {\em decisive}.
The {\em essential period} of a  cyclic play of period $n$ is 
the number of decisive steps $k\le n$.
For example,  the cycle of period $6$
$$(1,2),(2,2), (2,3),(3,3),(3,1),(1,1)$$
never dithers whereas for the cycle of period $7$
$$(1,2),(2,2), (2,3),(3,3),(3,2),(3,1),(1,1)$$
players dither in the 5th step, so the essential period is again 6. 
Let $\sigma=(\sqrt{5}-1)/2\approx 0.618$.

\begin{theo}\label{thm:main} [An abundance of periodic play]
For each $\beta\in (0,1)$ and {\em each} $n\ge 1$
there are infinitely many different orbits $\gamma_s$, $s=1,2,\dots$ 
of the differential equation
(\ref{eq1}) (and of (\ref{eq2})) with corresponding cyclic play of period $N_s\to \infty$ as $s\to \infty$ but with essential period equal to $6n$. Moreover,
\begin{itemize}
\item for $\beta\in (0,\sigma)$, these orbits with cyclic play reach the interior equilibrium $E$ in finite time;
\item
for $\beta\in (\sigma,1)$ these orbits  with cyclic play are genuine periodic orbits of
(\ref{eq1})  (and of (\ref{eq2})).
\end{itemize}
\end{theo}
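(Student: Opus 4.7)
The plan is to construct the orbits $\gamma_s$ by concatenating two qualitatively different pieces of motion: (i) a \emph{decisive cycle} of six steps around a Shapley-type orbit outside $C(\Gamma)$, and (ii) an \emph{indecisive cycle} of six dithering steps $(\bar 1,\bar 1),(\bar 1,\bar 2),\dots,(\bar 3,\bar 1)$ inside $C(\Gamma)$. Each decisive cycle contributes $6$ to both the total period and the essential period, whereas each indecisive cycle contributes $6$ only to the total period. Hence an orbit that traverses the decisive cycle $n$ times and the indecisive cycle $k$ times will have essential period $6n$ and total period $N_s=6(n+k)$, and letting $k\to\infty$ gives the required sequence $\gamma_s$.

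To realize such orbits I would introduce two Poincar\'e return maps on transverse cross-sections, $R$ for one full decisive $6$-cycle and $S$ for one full indecisive $6$-cycle, each defined on a $2$-dimensional section taken transverse to the flow on $\partial C(\Gamma)$. Since (\ref{eq1}) is piecewise linear outside $E$, both $R$ and $S$ are piecewise projective (affine after rescaling to (\ref{eq2})). Exploiting the $\zz_3$ cyclic symmetry of $(A_\beta,B_\beta)$, the map $R$ reduces to an iterate of a simpler one-step return, and $R^n$ is well defined on an open neighborhood of the Shapley-type cycle.

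For $\beta\in(\sigma,1)$, the key input is the jitter-type periodic orbit in $C(\Gamma)$ from \cite{SSH2008}: it is a fixed point of $S$ which is neither attracting, repelling, nor of saddle type, so $S^k$ is well defined on an open neighborhood of this fixed point for every $k\ge 0$, and the compositions $R^n\circ T\circ S^k$ (with $T$ the glueing between the two sections) depend non-trivially on $k$. I would then solve the closing-up equation $R^n\circ T\circ S^k = \mathrm{id}$; for each fixed $n$ and each sufficiently large $k$ the piecewise-linear structure, together with the neutrality of the jitter fixed point, produces a genuine solution, and distinct values of $k$ give distinct orbits with $N_s\to\infty$. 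For $\beta\in(0,\sigma)$, one argues instead that orbits inside $C(\Gamma)$ reach $E$ in finite time, and one uses that $S^k$ sends a one-parameter family of entry points onto the finite-time preimage of $E$; for each $n$ and each $k$, a single orbit enters $C(\Gamma)$ after $n$ decisive cycles, performs $k$ indecisive cycles, and terminates at $E$.

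The principal obstacle is twofold. First, one must justify that the glueing $R^n\circ T\circ S^k$ genuinely corresponds to a solution of the differential inclusion (\ref{eq1}); this reduces to verifying that the transversality condition of \cite{SSH2008} persists where decisive play meets $\partial C(\Gamma)$, so that orbits enter and exit the dithering region unambiguously. Second, one must ensure that the infinitely many candidate orbits indexed by $k$ are truly distinct rather than re-parametrisations of one another; here the random-walk heuristic for the jitter orbit must be promoted to an explicit combinatorial account of the symbolic itineraries, and this is where the bulk of the technical work will concentrate.
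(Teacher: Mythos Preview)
Your proposal rests on a picture that cannot be realised by the flow. You want each orbit $\gamma_s$ to alternate between a block of Shapley-type decisive play ``outside $C(\Gamma)$'' and a block of $(\bar 1,\bar 1),(\bar 1,\bar 2),\dots$ play ``inside $C(\Gamma)$'', glued by a transition map $T$. But the paper records (and \cite{SSH2008} proves) that $C(\Gamma)$ is \emph{fully invariant}: an orbit that starts outside it never enters, and one that starts inside never leaves. So the composition $R^n\circ T\circ S^k$ you write down does not correspond to any trajectory of (\ref{eq1}); the glueing map $T$ does not exist. Relatedly, the ``indecisive cycle'' $(\bar 1,\bar 1),\dots,(\bar 3,\bar 1)$ is the itinerary of orbits \emph{on} the two-dimensional cone $C(\Gamma)$ itself, where both players genuinely mix; it is not the dithering behaviour of nearby orbits, which play pure strategies throughout.

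The actual mechanism is entirely local to $\tilde\Gamma$. One passes to the induced flow on $\partial\Sigma$ and studies the first-return map $P$ to a two-dimensional section through a point $z_0\in\tilde\Gamma$. Along each of the six legs of $\tilde\Gamma$, a nearby orbit spirals many times (these spirals are the indecisive steps: each player uses only two pure strategies), and the passage from one leg to the next is a decisive step. The crucial computation (Proposition~\ref{spiralest}) is that the number of spirals along a leg is of order $1/r$ where $r=\dist(z,z_0)$, so the transition map from one section to the next has the form $z\mapsto A_1\circ R_{2\pi/\|A_0^{-1}z\|+B}\circ A_0^{-1}(z)$, a ``jitter map'' whose rotation angle blows up at the centre. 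A Brouwer fixed-point argument (Propositions~\ref{prop:modelmap}--\ref{prop:modelmap2}) then shows that for every $n$ the return map $P$ has a sequence of period-$n$ points accumulating on $z_0$; each such periodic point gives an orbit with essential period $6n$ (six decisive transitions per circuit of $\tilde\Gamma$) and total period $N_s\to\infty$ as the point approaches $z_0$. Finally, Proposition~\ref{prop:implicationsigma} lifts these induced-flow periodic orbits back to $\Sigma$, producing genuine periodic orbits for $\beta\in(\sigma,1)$ and orbits reaching $E$ in finite time for $\beta\in(0,\sigma)$. None of this involves the Shapley orbit or any excursion away from a neighbourhood of $\Gamma$.
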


In Sparrow et al \cite{SSH2008} we showed that for $\beta\in (-1,\sigma)$ there exists a periodic 
orbit corresponding to cyclic play $(1,2),(2,2), (2,3),(3,3),(3,1),(1,1)$
(the Shapley orbit)
which attracts an open set of initial conditions; the  above theorem 
shows  that many periodic orbits are not attracted to this cycle.
In that paper it was also shown that for $\beta\in (\sigma,1)$ there exists another periodic orbit
corresponding to cyclic play $(1,3),(1,2), (3,2),(3,1),(2,1),(2,3)$
(the anti-Shapley orbit) which becomes attracting when $\sigma\in (\tau,1)$ where 
$\tau\approx 0.915$. Again this attracting orbit does not attract everything.

That the players can have infinitely many orbits with the same essential period,
is a consequence of the fact that there is a sequence of periodic orbits 
converging to the   
orbit of mixed strategies
$(\bar 1, \bar 1), (\bar 1, \bar 2), (\bar 2, \bar 2), (\bar 2, \bar 3), (\bar 3, \bar 3), (\bar 3,\bar 1)$,
and which all have the same essential period. For these periodic orbits, the essential period is 
the number of times it follows the period 6 orbit before returning to its original position,
whereas the actual period increases if the players dither for longer along each of the 
6 legs. Along these periodic orbits at any given moment only one of the players is indifferent,
but they dither for a long time between each decisive move. 

Let us relate this theorem to a result of Krishna and 
Sj\"ostr\"om \cite{Krishna-Sjostrom98} (which  
builds on earlier work of Rosenm\"uller \cite{Rosenmuller71}).
In this interesting paper, they show that for a  generic game
(i.e. for Lebesgue almost all pay-off matrices) there exists
no open set of initial conditions for which
fictitious play converges cyclically to a mixed strategy
equilibrium  (unless  both players use at most two pure strategies).
In other words, if fictitious play converges to a mixed strategy
equilibrium with both players using more than two strategies 
then the choice of strategies cannot follow a cyclic  pattern
unless possibly the initial conditions are in some codimension-one space.
Our result shows that for $\beta\in (0,\sigma)$ countably many orbits 
do indeed converge cyclically to the equilibrium (and along these orbits 
at any given moment only one of the players is indifferent).
Our result does not rely on the symmetry of the matrices:
it holds for an open set of matrices,  see Theorem~\ref{thm:robustness} and Section~\ref{sec:robustness}.

For their result, Krishna and Sj\"ostr\"om {\em only} only   needed to consider orbits for which
 at any given moment only one of the players is  indifferent. 
 It is interesting to note however that, as becomes clear from 
 this paper, it is precisely near the set $C(\Gamma)$ 
 where {\em both} players are simultaneously indifferent that much of the interesting
behaviour happens (and this set 'organises' the local dynamics).

\subsection{Abundance of Dithering Behavior}

The next theorem shows that there are many orbits which
dither for very long periods. To make this precise,
let us assume the players start at $p\in \Sigma$ and
aim for $(i_0,j_0),(i_1,j_1),(i_2,j_2),\dots$.
Next associate to these moves a sequence $(R_k)_{k\ge 3}$
with $R_k\in \{D,I\}$, where $R_k$ is equal to $D$ or $I$
depending on whether the players are decisive  or indecisive at time $k\ge 3$.
In this way we get a map 
$$\Sigma\ni p \mapsto \{R_k(p)\}_{k\ge 3}\in \{D,I\}^\nz$$
which captures partly what play evolves from starting position $p\in \Sigma$.
(We ignore $k<3$ because by definition the players are then
always decisive. More precisely, 
if $T$ denotes the map which assigns to $(i_k,j_k)_{k\ge 0}$ the sequence  $(R_k)_{k \ge k_0}$
and $\sigma$ the shift map, then $T\circ \sigma=\sigma \circ T$ only if
 we take $k_0\ge 3$.)

To simplify the coding even further, 
define the times $3\le N_0(p)<N_1(p)<N_2(p)<\dots$
for which the players are decisive (only considering times $\ge 3$).
Note that these times uniquely determine again the sequence $R_3(p),R_4(p),\dots$\, .
If $N_{s+1}(p)-N_s(p)$ is large, then we say that the players
{\em dither} for a long time (as they then 
each play back and forth between two strategies).

\begin{theo}
\label{thm:dither}
[There is a lot of freedom in the choice of dithering sequences]
For each $\beta\in (0,1)$ there exist $N^0\in \nz$, $0<\lambda<1<\mu$ and  
a compact set $X\subset 
\Sigma$, 
so that for each  sequence $N_0<N_2<N_4<\dots$
with $N_0\ge 3$ and with
$$\lambda \le \frac{N_{2s+4}-N_{2s+2}}{N_{2s+2}-N_{2s}}\le \mu \mbox{  and }
N_{2s+2}-N_{2s}\ge N^0 \mbox{ for all }s\ge 0$$
there exists  $p\in X$ such that
$$|N_{2s}(p)-N_{2s}|\le 4 \mbox{ for all }s\ge 0.$$
Moreover,
\begin{itemize}
\item
for $\beta\in (0,\sigma)$  orbits in $X$ converge to $E$;
\item
for $\beta\in (\sigma,1)$  orbits starting in $X$ do {\em not} converge to $E$.
\end{itemize}
\end{theo}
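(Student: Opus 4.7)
To construct orbits with prescribed sequences of decisive times, I would study a Poincar\'e return map near the period-6 mixed-strategy orbit lying in the invariant set $C(\Gamma)$. Fix a small cross-section $\Sigma_0$ transverse to one leg of this period-6 orbit at a reference point $p^*\in C(\Gamma)$, and let $F:\Sigma_0\to \Sigma_0$ be the return map obtained by following the flow for one lap around the six legs. Because (\ref{eq1}) is piecewise linear, $F$ is piecewise affine near $p^*$, and its branches can be computed explicitly along the lines of Sparrow et al.~\cite{SSH2008}. The fixed point $p^*$ equals $E$ itself for $\beta\in(0,\sigma)$ (no period-6 orbit exists inside $C(\Gamma)$) and is a point on the jitter-type period-6 orbit for $\beta\in(\sigma,1)$; in both cases $F$ has no hyperbolic splitting at $p^*$.

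Next I would establish a dictionary relating position on $\Sigma_0$ to dither length on the following leg. On each leg only one of the two players is genuinely indifferent, so close to $C(\Gamma)$ the orbit oscillates between two adjacent cells of the piecewise-linear system, and a direct computation with (\ref{eq1}) gives that the number of indecisive steps before the next decisive move is
\[
n(q) \;=\; \frac{1}{c}\log\frac{1}{d(q,C(\Gamma))} + O(1)
\]
for some $c=c(\beta)>0$ and $q\in \Sigma_0$ close to $p^*$. Accumulating over one lap yields $N_{2s+2}(p) - N_{2s}(p) = n(q_s) + O(1)$, where $q_s = F^s(q_0)$ and $p$ is the initial condition on the full flow above $q_0$.

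The central, and hardest, step is to realize any bounded-ratio sequence of transverse distances as $d(q_s,C(\Gamma))$. The piecewise-affine map $F$ has finitely many branches whose linear parts have explicit expansion and rotation factors transverse to $C(\Gamma)$; a nested-interval, Markov-partition style argument applied to the inverse branches of $F$ should show that for any sequence $d_s$ with $\lambda \le d_{s+1}/d_s\le \mu$ and $d_s$ below a fixed threshold, there exists $q_0\in \Sigma_0$ with $d(F^s q_0, C(\Gamma))$ comparable to $d_s$ for every $s$. The logarithm in the dictionary converts the multiplicative ratio bound on $d_s$ into the additive bound on $N_{2s+2}-N_{2s}$ required by the hypothesis, and the freedom at each stage to re-select among inverse branches absorbs accumulated rounding error to produce the uniform bound $4$. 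Taking $X$ to be the compact set of initial conditions obtained in this construction yields the theorem.

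The main obstacle is this realization step: because $p^*$ is of jitter type and not hyperbolic, one cannot simply invoke standard stable-manifold or specification/shadowing theorems, and the construction has to be carried out explicitly using the piecewise-affine structure of $F$ and a careful case analysis of its branches; the lower bound $N^0$ on $N_{2s+2}-N_{2s}$ enters precisely to guarantee that all iterates $q_s$ stay inside the region of validity of the piecewise-linear formulas for both $F$ and $n$. The convergence dichotomy of the theorem is then immediate from the identification of $p^*$: for $\beta\in(0,\sigma)$ the orbits in $X$ track the fixed point $p^*=E$ and accumulate at it, while for $\beta\in(\sigma,1)$ they track the genuine period-6 orbit through $p^*\ne E$ and therefore stay bounded away from $E$.
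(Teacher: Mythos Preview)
Your overall architecture is on the right track---a transverse section near $C(\Gamma)$, a return map, and a dictionary between transverse distance and dither length---but the dictionary you write down is wrong, and this error propagates through the rest of the sketch.

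The formula $n(q)=\tfrac{1}{c}\log\tfrac{1}{d(q,C(\Gamma))}+O(1)$ would hold if the local $2\times 2$ spiral near a leg of $\Gamma$ were a hyperbolic focus, with return map $r\mapsto \lambda r$. It is not. Along each leg both players oscillate between two strategies, and the induced one-dimensional first return to a half-line through the local equilibrium is the M\"obius map $r\mapsto r/(1+ar)$, whose $n$-th iterate is $r/(1+nar)$. Hence the number of dithering steps needed to move from radius $r$ to radius $cr$ is $n\approx \tfrac{1-c}{ac}\cdot\tfrac{1}{r}$, i.e.\ $n(q)\sim 1/d(q,C(\Gamma))$, not $\log(1/d)$. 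This is exactly why the hypothesis of the theorem is a \emph{ratio} bound $\lambda\le\frac{N_{2s+4}-N_{2s+2}}{N_{2s+2}-N_{2s}}\le\mu$ rather than the additive bound your logarithmic dictionary would produce; your sentence ``the logarithm \ldots converts the multiplicative ratio bound on $d_s$ into the additive bound on $N_{2s+2}-N_{2s}$ required by the hypothesis'' misreads both the local dynamics and the statement.

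A second consequence of the $1/r$ winding is that your description of $F$ as ``piecewise affine near $p^*$'' with finitely many branches is not adequate: the number of affine pieces an orbit visits on one lap is itself $\sim 1/r$, so near $p^*$ the return map has infinitely many branches. The paper handles this by first projecting radially from $E$ onto $\partial\Sigma$ (reducing to a two-dimensional section) and then showing that the transition along each leg has the explicit form
\[
z\;\longmapsto\; A_1\circ R_{\,2\pi/\|y\|+B(y)}\circ A_0^{-1}(z),\qquad y=A_0^{-1}(z),
\]
a quadrilateral rotation by an angle that blows up like $1/\|y\|$, sandwiched between two piecewise-linear maps $A_0,A_1$ determined by explicit invariant quadrangles. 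The realization step is then not a Markov/inverse-branch argument but an application of Brouwer's fixed point theorem to the map $(G-H_2)\circ H_1^{-1}$ on cubes $2\pi\underline k+[0,2\pi]^n$, where $H_1$ encodes the rotation amounts $2\pi/r,\,2\pi/(a_2r),\ldots$ and $G$ the target angles; the ratio condition $\lambda\le k_{i+1}/k_i\le\mu$ is precisely what guarantees that $H_1^{-1}$ lands in the admissible region. Finally the $\beta$-dichotomy comes from the radial M\"obius return on the cone $C(\tilde\Gamma)$: its fixed point at $E$ is attracting for $\beta\in(0,\sigma)$ and repelling (with a second attracting fixed point on the genuine periodic orbit $\Gamma$) for $\beta\in(\sigma,1)$, which lifts the boundary picture back to $\Sigma$.
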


\medskip

That this theorem only refers to the gaps between even decisive moments, $N_{2s+2}-N_{2s}$,
is because the gaps between the even and odd moments are somewhat more arbitrary.
However, if $N_{2s+2}(p)-N_{2s}(p)$ is large for all $s\ge 0$, then 
$N_{s+1}(p)-N_s(p)$ is also large for all $s\ge 0$, i.e., 
the players dither for long periods between making a decisive move for orbits described above.

We did some numerical simulations for games determined by completely different
matrices.  In many of these, similar dithering behaviour also occurred.
Even for zero-sum games, the players seem to converge to equilibria in a dithering fashion
(in fact, in $2\times n$ games, dithering is unavoidable).
We will report on these simulations in a subsequent paper.

\subsection{Chaotic Behavior}

The previous theorem states that there are orbits starting in $X$ which dither for more or less
arbitrary lengths $N_{2s+2}-N_s$.  An immediate application of this theorem
is the following result:

\begin{theo}[Chaos]
Take  
$\hat N\ge N^0$ so large that $\lambda\le (\hat N-1)/\hat N \le (\hat N+1)/\hat N\le \mu$.
For each sequence $(\epsilon_{2s})_{s\ge 0}$ with $\epsilon_{2s}\in \{-5,0,5\}$
there exists $p\in X$ with
$$N_{2s+2}-N_{2s}\in [\hat N + \epsilon_{2s},\hat N+4 +\epsilon_{2s}] \mbox{ for all }s\ge 0.$$
So for such a $p$, \,\, $N_{2s+2}-N_{2s}$ is in the interval $[\hat N-5,\hat N-1]$, $[\hat N,\hat N+4]$
or $[\hat N+5,\hat N+9]$ depending on the parity of $\epsilon_{2s}$. 
In particular the flow contains subshifts of finite type and has positive topological entropy.
The flow also has sensitive dependence on initial conditions.
\end{theo}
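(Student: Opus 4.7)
The plan is to treat this statement as a direct corollary of Theorem~\ref{thm:dither}, using the three possible values of $\epsilon_{2s}$ as letters of a ternary alphabet and encoding each infinite ternary word as a valid target decisive-time sequence of the type required by that theorem.

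Given $(\epsilon_{2s})_{s\ge 0}\subset\{-5,0,5\}$, I would set $N_0:=N^0$ and $N_{2s+2}-N_{2s}:=\hat N+\epsilon_{2s}+2$, so each gap takes one of the three values $\hat N-3,\hat N+2,\hat N+7$. For $\hat N$ sufficiently large (which the hypothesis on $(\hat N\pm 1)/\hat N$ supplies, after perhaps a mild tightening of constants) every such gap exceeds $N^0$, and the ratio $(N_{2s+4}-N_{2s+2})/(N_{2s+2}-N_{2s})$ is pinched between $(\hat N-3)/(\hat N+7)$ and $(\hat N+7)/(\hat N-3)$, hence lies in $[\lambda,\mu]$. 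Theorem~\ref{thm:dither} then produces $p\in X$ with $|N_{2s}(p)-N_{2s}|\le 4$ for every $s$, and the triangle inequality forces $N_{2s+2}(p)-N_{2s}(p)$ to sit inside the advertised length-$4$ window $[\hat N+\epsilon_{2s},\hat N+4+\epsilon_{2s}]$ once the centring of the target gaps is arranged correctly.

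Once this encoding is in place, positive topological entropy is immediate: there are $3^n$ distinct length-$n$ ternary words, each realised by an orbit segment of time length at most $n(\hat N+9)$, and any two words differing at some position $s$ produce orbits whose gap $N_{2s+2}(p)-N_{2s}(p)$ falls in disjoint intervals, hence distinguishable $\{D,I\}$-codings. The growth rate of distinguishable orbit segments is therefore at least $3^{T/(\hat N+9)}$, giving entropy bounded below by $(\log 3)/(\hat N+9)>0$. The same construction provides a continuous surjection from $\{-5,0,5\}^{\nz}$ (with the shift) onto a closed invariant subset of the first-return map to a cross-section of $X$, so the flow contains a full one-sided $3$-shift and, a fortiori, every subshift of finite type on three symbols.

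Sensitive dependence then follows by inspecting two codes $(\epsilon_{2s})$ and $(\epsilon'_{2s})$ that first disagree at $s=s_0$: the corresponding orbits $p,p'$ have $N_{2s_0+2}(\cdot)-N_{2s_0}(\cdot)$ in disjoint integer windows, so after finitely many steps the two orbits sit in different cells of the phase-space partition associated with the $\{D,I\}$-coding, giving a $\Sigma$-separation bounded below by a constant independent of $p,p'$. The main obstacle in executing this plan is purely one of constant-chasing: the $\pm 4$ slack provided by Theorem~\ref{thm:dither} must be absorbed cleanly enough that the three length-$4$ target windows remain distinguishable. This is the reason the theorem uses spacing $5$ rather than $1$ in the alphabet $\{-5,0,5\}$, and also why $\hat N$ must be chosen large compared with the implicit constants in Theorem~\ref{thm:dither}.
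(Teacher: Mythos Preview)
Your approach is the same as the paper's: the Chaos theorem is stated there as ``an immediate application'' of Theorem~\ref{thm:dither}, with no separate proof, and your derivation via encoding ternary words as target gap sequences is exactly the intended reading.

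There is, however, a concrete arithmetic slip in your execution. From $|N_{2s}(p)-N_{2s}|\le 4$ and $|N_{2s+2}(p)-N_{2s+2}|\le 4$ the triangle inequality gives
\[
\bigl| (N_{2s+2}(p)-N_{2s}(p)) - (N_{2s+2}-N_{2s}) \bigr| \le 8,
\]
so the realised gap lands in a window of length $16$, not length $4$. No ``centring of the target gaps'' fixes this: the slack is two-sided and additive. With the alphabet $\{-5,0,5\}$ the three nominal windows $[\hat N-5,\hat N-1]$, $[\hat N,\hat N+4]$, $[\hat N+5,\hat N+9]$ are only $5$ apart, so the $\pm 8$ error swamps them and you cannot conclude the displayed inclusion as written. (The paper's statement appears to carry the same loose constants.)

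This does not damage the substantive conclusions you draw. For the subshift, positive entropy, and sensitive dependence, all you need is \emph{three disjoint windows} that the realised gaps can be forced into; replacing $\{-5,0,5\}$ by, say, $\{-17,0,17\}$ (and enlarging $\hat N$ accordingly so the ratio condition of Theorem~\ref{thm:dither} still holds) makes the $\pm 8$ windows pairwise disjoint and your argument goes through verbatim. So the plan is sound and matches the paper; only the specific numerical claim about length-$4$ windows fails, and you already flagged constant-chasing as the weak point.
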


The definition of the notions `subshifts of finite type',   `positive topological entropy'
and `sensitive dependence on initial conditions' can be found in almost any book
on dynamical systems, for example Guckenheimer and Holmes \cite{GH}.
We have numerical evidence that
this game is  chaotic in a more profound sense: it appears that there exists a range 
of parameters $\beta\in (\sigma,\tau)$ so that (Lebesgue) almost all starting positions
correspond to chaotic behaviour. We will report on this in a subsequent paper.

\subsection{Robustness}

The above results do not require the matrices to be of a
special form, and hold for games corresponding to an open set of matrices:

\begin{theo}[Robustness]
\label{thm:robustness}
For each $\beta\in (0,1)$ with $\beta\ne \sigma$,
there exists $\epsilon>0$ so that
for each $3\times 3$ matrices $A$ and $B$
with $$||A-A_\beta ||,||B-B_\beta ||<\epsilon$$
the previous theorems also hold. More precisely,
\begin{itemize}
\item for $\beta\in (0,\sigma)$ and $\epsilon>0$ sufficiently small,  
\begin{itemize}
\item there exists a periodic  orbit corresponding to cyclic play $(1,2)$, $(2,2)$, $(2,3)$, $(3,3)$, $(3,1)$, $(1,1)$
(the Shapley orbit) which attracts an open set of initial conditions; 
\item there exist orbits of mixed strategies 
$(\bar 1, \bar 1), (\bar 1, \bar 2), (\bar 2, \bar 2), (\bar 2, \bar 3), (\bar 3, \bar 3), (\bar 3,\bar 1)$;
such orbits lie on a cone with apex $E$; all orbits on this cone converge to $E$;
there are infinitely many different orbits $\gamma_s$, $s=1,2,\dots$  as in Theorem~\ref{thm:main}
which reach the interior equilibrium $E$ in finite time;
\item there are orbits which dither as in Theorem~\ref{thm:dither} which again reach the interior  equilibrium $E$ in finite time;
\end{itemize}
\item  for $\beta\in (\sigma,1)$ and $\epsilon>0$ sufficiently small,  
\begin{itemize}
\item there exist a periodic orbit $\Gamma$ of mixed strategies 
$(\bar 1, \bar 1), (\bar 1, \bar 2), (\bar 2, \bar 2), (\bar 2, \bar 3), (\bar 3, \bar 3), (\bar 3,\bar 1)$;
the cone through this periodic orbit with apex $E$ is completely invariant and all orbits on this cone (apart 
from $E$) converge to this periodic orbit;
\item there are infinitely many different periodic orbits $\gamma_s$, $s=1,2,\dots$  as in Theorem~\ref{thm:main}
(these orbits stay near $\Gamma$);
\item  there are orbits which dither as in Theorem~\ref{thm:dither} (these orbits also  stay near $\Gamma$);
\item one has chaos and sensitive dependence on initial conditions;
\end{itemize}
\item  for $\beta\in (\tau,1)$ and  $\epsilon>0$ sufficiently small,  
there exists an attracting orbit which corresponding to cyclic play $(1,3),(1,2), (3,2),(3,1),(2,1),(2,3)$
(the anti-Shapley orbit). 
\end{itemize}
\end{theo}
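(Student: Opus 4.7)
The plan is to view every object invoked in the previous theorems --- the cone $C(\Gamma)$, the period 6 orbit $\Gamma$ of mixed strategies, the Shapley and anti-Shapley attractors, and the symbolic coding that produces Theorems~\ref{thm:main} and \ref{thm:dither} --- as features of a single piecewise affine return map $P$ on a Poincar\'e section, and then to show that $P$ together with its relevant quantitative estimates depends continuously on the entries of $A,B$ for $\beta\ne\sigma$. Once this is done, each argument from the unperturbed proofs goes through verbatim after shrinking constants slightly.

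First I would check continuity of the combinatorial structure. The indifference hyperplanes $\{p^B:(Ap^B)_i=(Ap^B)_j\}$ and $\{p^A:(p^A B)_i=(p^A B)_j\}$ depend linearly on the matrix entries, so the open polyhedra on which $(BR_A,BR_B)$ is a constant pair of (pure or mixed) strategies vary continuously in the Hausdorff metric. On each such polyhedron the flow (\ref{eq1}) is an affine relaxation towards a point determined affinely by $A,B$, so the transition map between adjacent polyhedra depends smoothly on $(A,B)$ on compact pieces that stay a definite distance from $E$. The hypothesis $\beta\ne\sigma$ ensures that the Nash equilibrium $E$ is the unique transverse intersection of the two indifference complexes, and that the transversality condition of \cite{SSH2008} used to make the flow uniquely defined outside $E$ is an open condition, so both properties survive for $\epsilon$ small.

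Next I would establish persistence of the period~6 mixed orbit and its cone. Choose a Poincar\'e section $\Sigma_0$ inside the $(\bar 1,\bar 1)$ cell and write the return map as $P=P_6\circ\cdots\circ P_1$, where $P_k$ is the piecewise affine transition across the $k$-th leg $(\bar i,\bar j)$; the transversality from the previous step makes each $P_k$ well defined and smoothly dependent on $(A,B)$. For $\beta\in(\sigma,1)$ the unperturbed $\Gamma$ gives a fixed point of $P$ of jitter type, with a neutral radial direction and controlled expansion/contraction transverse to it; the implicit function theorem applied in the non-degenerate directions yields a nearby fixed point of the perturbed $P$, hence a nearby periodic orbit of mixed strategies. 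The role of $C(\Gamma)$ is now played by the topological cone formed by the stable set of this fixed point, emanating from the perturbed $E$. For $\beta\in(0,\sigma)$ the same cone structure exists but $P$ is contracting radially toward $E$, and the surviving contraction forces all cone orbits to reach $E$ in finite time. Finally, the Shapley and anti-Shapley orbits are hyperbolic attracting periodic orbits whose Poincar\'e multipliers lie strictly inside the unit disk, so they persist in the usual way.

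The remaining content of Theorems~\ref{thm:main}--\ref{thm:dither} is produced by a shadowing/coding argument on $P$ governed by the quantitative constants $N^0,\lambda,\mu$; since those constants depend continuously on $(A,B)$, they survive with slightly weaker values, so the same coding yields the claimed abundance of periodic orbits, dithering orbits and chaotic subshifts for the perturbed game. The main obstacle is that the set $C(\Gamma)$ is defined by the non-generic condition that \emph{both} players are simultaneously indifferent between two pure strategies, and so is not preserved as an invariant set under a generic perturbation of $A,B$. The substance of the proof is therefore to replace $C(\Gamma)$ by the topological cone described above and to verify that this merely topological cone still supports the same symbolic dynamics; the transversality afforded by $\beta\ne\sigma$ is precisely what rules out tangential crossings of the indifference surfaces along these orbits and thus preserves the piecewise affine combinatorial type of $P$ on which the whole argument rests.
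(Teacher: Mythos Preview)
Your proposal contains a genuine misconception that undermines the argument. You write that ``the set $C(\Gamma)$ is defined by the non-generic condition that \emph{both} players are simultaneously indifferent between two pure strategies, and so is not preserved as an invariant set under a generic perturbation of $A,B$,'' and you then spend the rest of the outline trying to manufacture a topological substitute for this cone via stable sets and an implicit function theorem at the jitter fixed point. But the premise is wrong: the jitter set $J$ (and hence the cone $C(\Gamma)$) \emph{is} preserved under perturbation. Each piece $Z^B_{k,l}\times Z^A_{i,j}$ is the intersection of two affine hyperplanes whose equations depend linearly on the entries of $A$ and $B$, so for nearby matrices one still obtains a codimension-two piecewise linear set, varying continuously in the Hausdorff metric. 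On this set the differential inclusion admits solutions in which each player plays the appropriate mixed strategy keeping the other indifferent, so $J$ remains exactly invariant. Since best responses are constant on rays through $E$, the cone structure with apex at the (perturbed) Nash equilibrium is automatic.

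This matters because your replacement strategy does not actually work: the fixed point of $P$ corresponding to $\tilde\Gamma$ is of jitter type, hence non-hyperbolic and non-smooth, so the implicit function theorem cannot be invoked to continue it. The paper instead observes that $\tilde\Gamma$ persists for free as the closed curve $J\cap\partial\Sigma$ for the induced flow. The genuine periodic orbit $\Gamma$ in the full flow (for $\beta\in(\sigma,1)$) is then recovered by analysing the one-dimensional first return map along a ray of the cone; this map is a M\"obius transformation whose fixed points are hyperbolic when $\beta\ne\sigma$, and \emph{that} is where ordinary persistence applies. The remaining conclusions (infinitely many periodic orbits, dithering, chaos) follow because the quadrangle data governing the jitter return map depend continuously on $A,B$, so the inequalities replacing (\ref{eq:excontr}) survive --- here your last paragraph is in the right spirit, but it should rest on the persistent geometric cone, not on a hypothetical topological one.
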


\bigskip

Here $\tau\approx 0.915$ is the root of some
polynomial of degree 6, which we computed in Sparrow  et al \cite{SSH2008}.
Of course $A$ and $B$ near $A_\beta$ resp. $B_\beta$ 
will have a Nash equilibrium $E$, which is close but not necessarily
equal to the Nash equilibrium  of $A_\beta$,  $B_\beta$. 
For matrices $A,B$ near $A_\sigma,B_\sigma$ the existence
of periodic orbits and of a dithering set also hold,
but it is no longer clear whether these orbits converge to the Nash equilibria $E$ or not.

\subsection{The idea of the proof and some general comments}

The main point of our analysis is to exploit that one can simplify the study
by identifying points on half-lines through $E$. This way we get an induced
flow on $\partial \Sigma$ (which is topologically a three sphere).
Associated to each set 
$Y\subset \partial \Sigma$ which is forward invariant under the induced flow
is the cone $C(Y)$ over $Y$ with apex $E$
which is forward invariant under the original flow.
Similarly,  a periodic orbit $\gamma$ of the original flow, 
corresponds to a periodic orbit $\tilde \gamma$ of the induced flow.
We apply this idea in particular to the periodic
orbit $\Gamma$  with mixed strategies
$(\bar 1, \bar 1), (\bar 1, \bar 2), (\bar 2, \bar 2), (\bar 2, \bar 3), (\bar 3, \bar 3), (\bar 3,\bar 1)$
and the  corresponding periodic orbit $\tilde \Gamma$ of the induced flow. 
It turns out that a first return map to a section through a point
in $\tilde \Gamma$ has extremely interesting behaviour:
it is of {\em `jitter type'}, see the final section
of this paper.

We believe that looking at our approach of analysing 
the induced flow, and the notion of orbits of `jitter type'  will be useful in analysing fictitious play in general.

This is not the first time subshifts of finite type were shown to exist 
in fictitious play.  Cowan \cite{Cowan92} already did this,
by considering a matrix with extremely large coefficients. 
Our work is closer in spirit to Berger \cite{Berger1995} who
considers a  family of symmetric bimatrix games
depending on a parameter $k$ such that
(i) for $k\in (-2,0)$ has a Shapley orbit  which
degenerates as $k\to 0$, and (ii) for $k>0$, as in our case, there exists 
a hexagonal orbit along which both players are indifferent between (at least) 2 strategies. Berger observes similar 'chaotic' numerical phenomena as we did in our
previous paper Sparrow et al \cite{SSH2008}  
and also shows the existence of an additional periodic orbit of saddle-type.
  
As mentioned, we have numerical evidence that  there exists a range 
of parameters $\beta\in (\sigma,\tau)$ so that (Lebesgue) almost all starting positions
correspond to chaotic behaviour. More general games also show up 
the same dithering behaviour.

There are many papers which show that one has convergence to the equilibrium 
for  games where one or both of the players have only 2 strategies to choose from,
see Miyasawa \cite{Miyasawa61} and Metrick \& Polak 
\cite{Metrick-Polak94}
for the $2\times 2$ case; Sela \cite{Sela00} for the $2\times 3$ case; and
Berger \cite{Berger2005} for the general $2\times n$  case.
Jordan \cite{Jordan93} constructed a $2\times 2 \times 2$ fictitious game with a stable limit cycle.  
The $3\times 3$ example studied in this paper,  shows that the situation is far more complicated in 
general.


\section{Basic results on the Shapley system}

%

%

Let us recall some results from Sparrow et all \cite{SSH2008}.
Let us denote the set where player $A$ is indifferent between strategies $P_i^A$ and $P_j^A$ by $Z_{i,j}^A\subset \Sigma_B$ and define $Z_{ij}^B$ similarly. 
Figure~\ref{fig:Jorbit} shows pictures of the phase space marking
the lines $Z_{ij}^A$ and $Z_{ij}^B$ (for $\beta>0$).
Note that for all values of $\beta$ both players are 
indifferent between all three strategies
at the point $E=(E^A, E^B)$ where  $E^A=(E^B)^T=(1/3,1/3,1/3)$.

As mentioned, for $\beta\in [0,\sigma)$ the game has a periodic orbit
with cyclic play  $(1,2)$, $(2,2)$, $(2,3)$, $(3,3)$, $(3,1)$, $(1,1)$ and this orbit attracts an 
open set. For $\beta\in (\sigma,1)$ the game has another periodic orbit 
with cyclic play $(1,3),(1,2), (3,2),(3,1),(2,1),(2,3)$ and this orbit
is attracting for $\beta\in (\tau,1)$. When $\beta\to \sigma$
these orbits shrink to $E$. A third periodic orbit  $\Gamma$ exists
when $\beta\in (\sigma,1)$ with periodic 6 cyclic play 
$(\bar 1, \bar 1)$, $(\bar 1, \bar 2)$, $(\bar 2, \bar 2)$, $(\bar 2, \bar 3)$, $(\bar 3, \bar 3)$, $(\bar 3,\bar 1)$.
For $\beta\in (0,\sigma)$ there still exist orbits with this periodic 6 play,
but these orbit are not periodic, instead they 
converge to $E$. Two of the six sides of the corresponding
hexagon are schematically drawn in Figure~\ref{fig:Jorbit}.
Note that $\Gamma$ is contained in
$$J:=
(Z^B_{1,2}\times Z^A_{3,1})\cup (Z^B_{1,2}\times Z^A_{1,2})
\bigcup
(Z^B_{2,3}\times Z^A_{1,2})\cup (Z^B_{1,2}\times Z^A_{2,3})\bigcup
$$
\begin{equation}
\bigcup
(Z^B_{3,1}\times Z^A_{2,3})\cup (Z^B_{2,3}\times Z^A_{3,1})
\label{eqn:defJ}
\end{equation}
We call this the Jitter set, as nearby orbits jitter back and forth between strategies.
 When $\beta\to \sigma$ these orbits all shrink to $E$.

\begin{figure}[htp]
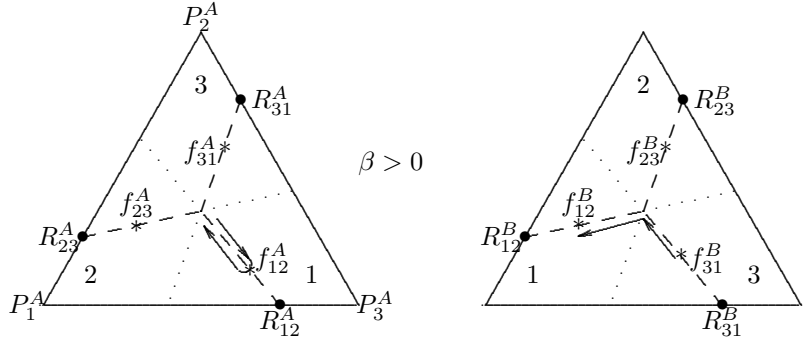
 \hfil
\beginpicture
\dimen0=0.21cm
\setcoordinatesystem units <\dimen0,\dimen0>
\setplotarea x from 0 to 30, y from -5 to 20
\put {$\beta>0$} at 22 9
\setlinear
\setsolid
\plot 0 0 20 0 /
\plot 0 0 10 17.3 20 0 /
\setdashes
\plot 15 0 10 5.98 /
\plot 2.5 4.33 10 5.98 /
\plot 12.5 13 10 5.98 /
\setdots
\plot 10 5.98 6.14 10.60 /
\plot 10 5.98 15.80 7.26 /
\plot 10 5.98 7.88 0 /
\setsolid
\multiput {*} at 11.5 9.8 /  
\multiput {*} at 13.1 2.0 5.9 4.8 /
\multiput {$\bullet$} at 15 0 2.5 4.33 12.5 13 /
\put {$f_{12}^A$} at 14.5 3
\put {$f_{31}^A$} at 10 9.8
\put {$f_{23}^A$} at 5.9 6.5
\put {$R_{12}^A$} at 15 -1
\put {$R_{31}^A$} at 14.5 13
\put {$R_{23}^A$} at 1 4.33
\put {$P^A_1$} at -1 0
\put {$P^A_3$} at 21 0
\put {$P^A_2$} at 10 18.3
\put {$2$} at  3 2
\put {$3$} at 10 14
\put {$1$} at 17 2
\arrow <5pt>  [0.2,0.4] from 11 5.5 to 13 3
\arrow <5pt>  [0.2,0.4] from 12.3 2.3 to 10.2 5.0
\circulararc -180 degrees from 13 3 center at 12.7  2.6
\setcoordinatesystem units <\dimen0,\dimen0> point at -28 0
\setplotarea x from 0 to 30, y from -5 to 20
\setsolid
\plot 0 0 20 0 /
\plot 0 0 10 17.3 20 0 /
\setdashes
\plot 15 0 10 5.98 /
\plot 2.5 4.33 10 5.98 /
\plot 12.5 13 10 5.98 /
\setdots
\plot 10 5.98 6.14 10.60 /
\plot 10 5.98 15.80 7.26 /
\plot 10 5.98 7.88 0 /
\setsolid
\multiput {*} at 11.4 9.8 12.4 3 5.9 4.9 /
\multiput {$\bullet$} at 15 0 2.5 4.33 12.5 13 /
\put {$f_{31}^B$} at 14.0 3
\put {$f_{23}^B$} at 10 9.8
\put {$f_{12}^B$} at 5.9 6.5
\put {$R_{31}^B$} at 15 -1
\put {$R_{23}^B$} at 14.5 13
\put {$R_{12}^B$} at 1 4.33
\put {$1$} at  3 2
\put {$2$} at 10 14
\put {$3$} at 17 2
\arrow <5pt>  [0.2,0.4] from 12.0 3 to  10 5.5
\arrow <5pt>  [0.2,0.4] from 10 5.5 to  5.9 4.4
\endpicture
\caption{\label{fig:Jorbit}
{\small The simplices $\Sigma_A$ and $\Sigma_B$ and the periodic orbit on the 
jitter set $J$ (for $\beta\in (\sigma,1)$).  
Two of the six legs of the periodic orbit are drawn in the figure (contained
in $Z_{1,2}^B\times Z_{1,3}^A$ and $Z_{1,2}^B\times Z_{1,2}^A$);
after this piece of the orbit,  the orbit turns in $\Sigma_A$ 
clockwise to the leg containing $f^A_{23}$ and in $\Sigma_B$ it doubles back
onto itself.  The points $f^A_{ij}$, $f^B_{kl}$ give the maximum extent
of the periodic orbit on $J$ and $R_{ij}^A$ and $R_{ij}^B$ are the intersections
of $J$ with the boundary of $\Sigma_A$ resp. $\Sigma_B$.
In this way
the periodic orbit  forms a hexagon in the 
four-dimensional space $\Sigma_A\times \Sigma_B$ which
moves (cyclically) around the different legs of $J$. 
}}
\end{figure}

\section{The induced dynamics on $\partial \Sigma$ for the Shapley family}
\label{s:induced}
 
In order to get a better understanding of the geometric and topological 
structure of all orbits in the Shapley family we will now consider an induced
flow on $\partial \Sigma$. This is obtained by projecting the original flow on $\Sigma
\setminus \left\{ E\right\} $ onto $\partial \Sigma $ using the
projection $\pi\colon \Sigma\setminus \{E\} \to \partial \Sigma $ obtained by defining 
$\pi \left( p\right) $ to be the
unique point of $\partial \Sigma $ that lies on the half line through $E$ in
the direction $p$.  Since the best response of the two players for all points on this half-line 
is the same, this gives a well-defined flow on $\partial \Sigma$.
The new flow obtained in this way is a faithful
representation of the 'angular' component of the original flow, but it
contains no information about the radial component of the flow. It is easier
to visualize because it is three dimensional. For example, the topological
three sphere $\partial \Sigma $ is homeomorphic (via stereographic projection) to the one-point
compactification of ${\rz}^{3}$.

The geometry of this induced flow will give
us more insight in the original flow. Indeed let
 $\gamma\subset \partial \Sigma$ and let $C(\gamma)\subset \Sigma$ be 
union of the closed half-lines through $E$ in the direction of $p\in \gamma$ (for any such $p$).
This set we will call the {\em cone of $\gamma$ over $E$} (this set is equal to the closure of
$\pi^{-1}(\gamma)$). Note that if $\gamma$ is a periodic orbit of the induced flow, then
the cone $C(\gamma)$ is an invariant set under the original flow.

\subsection{Simple periodic orbits on the induced flow on $\partial \Sigma$}

Even though the flow on $\Sigma_A\times \Sigma_B$
does not have a periodic orbit on the jitter-set 
$J$ for $\beta\in (0,\sigma)$,
the induced flow does have a periodic orbit on $J\cap \partial \Sigma$. The reason
for this is that $J$ is invariant, so the closed curve $J\cap \partial 
\Sigma$ is an 
invariant set for the induced flow.
When $\beta\in (0,\sigma)$ then orbits in $J$ of the original flow spiral 
towards $(E^A,E^B)$ (inside the topological surface $J$;
but the motion is never straight towards $(E^A,E^B)$).
Therefore in the induced flow this spiralling motion corresponds
to a periodic motion on the closed curve $\partial \Sigma \cap J$.
Let us denote this periodic orbit by $\tilde \Gamma$.

We can summarize the previous results on the existence and stability of periodic orbits
as follows.

\begin{prop} [Periodic orbits for the 
induced flow on $\partial \Sigma$]
\label{prop:orbonS3}
For $\beta\in (0,1]$ there are (at least)
three periodic orbits for the induced flow:
the one corresponding to the clockwise periodic orbit
(Shapley's orbit), an anticlockwise periodic orbit 
and a periodic orbit $\tilde \Gamma$ corresponding to the Jitter set $J$.
Moreover, 
\begin{itemize}
\item
for $\beta\in (0,\sigma)$ the clockwise periodic orbit  is attracting,
the anticlockwise orbit is of saddle-type, and the orbit $\tilde \Gamma$
is of jitter type;
\item 
for $\beta\in (\sigma,\tau)$,  the clockwise periodic orbit  is attracting,
the anticlockwise orbit is of saddle-type whereas the orbit $\tilde \Gamma$
is still of jitter type;
\item for $\beta\in (\tau,1]$,  the clockwise periodic orbit  is of saddle-type, 
the anticlockwise orbit is of attracting and the orbit $\tilde \Gamma$
is  of jitter type.
\end{itemize}
\end{prop}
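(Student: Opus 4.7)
The plan is to exploit the correspondence between the original flow on $\Sigma$ and the induced flow on $\partial\Sigma$ systematically. Since $\pi$ sends half-lines through $E$ to points on $\partial\Sigma$, a subset $\gamma\subset\partial\Sigma$ is invariant under the induced flow if and only if the cone $C(\gamma)$ is invariant under the original flow. Consequently, any periodic orbit of the original flow descends to a periodic orbit of the induced flow, and, more generally, any orbit of the original flow lying on a two-dimensional invariant cone $C(\gamma)$ and winding monotonically around $E$ (whether it stays at constant radius, spirals in, or spirals out) projects to a periodic orbit of the induced flow on $\gamma$.

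First I would establish existence of the three periodic orbits. For the jitter set: by the results of \cite{SSH2008} recalled above, $J$ contains a genuine periodic orbit $\Gamma$ when $\beta\in(\sigma,1)$, and for $\beta\in(0,\sigma)$ the orbits on $J$ spiral monotonically into $E$. In both cases $\tilde\Gamma:=J\cap\partial\Sigma$ is a periodic orbit of the induced flow. The clockwise (Shapley) and anticlockwise (anti-Shapley) cyclic play patterns each likewise determine an invariant topological cone through $E$; on each such cone the dynamics is either a genuine periodic orbit of the original flow or a monotone radial spiral, so projecting produces periodic orbits of the induced flow for every $\beta\in(0,1]$.

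Next, for the stability statements about the clockwise and anticlockwise orbits I would use the first-return map of the original flow to a small section transverse to the orbit in question. Away from the resonant parameters $\beta=\sigma,\tau$ this return map is piecewise linear and, in coordinates adapted to the half-line through $E$, splits into a one-dimensional radial factor (controlling attraction to or repulsion from $E$ within the invariant cone) and a transverse factor which is precisely the first-return map of the induced flow on a corresponding section. The explicit eigenvalue computations of \cite{SSH2008}, together with this splitting, then directly yield the claimed behaviour: Shapley attracting and anti-Shapley of saddle type for $\beta\in(0,\tau)\setminus\{\sigma\}$, and the reverse for $\beta\in(\tau,1]$.

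The main obstacle is the jitter-type statement about $\tilde\Gamma$. Unlike the other two orbits, the induced return map to a section through a point of $\tilde\Gamma$ is non-hyperbolic: no transverse direction exhibits uniform contraction or expansion, which is what the informal `jitter' description encodes. I would therefore defer the detailed verification to the final section of the paper, where the piecewise linear structure of this return map, inherited from the geometry of the hexagon on $\partial\Sigma\cap J$, is unpacked in detail; at this stage I would simply record that $\tilde\Gamma$ exists for every $\beta\in(0,1]$, that its transverse linearisation is non-hyperbolic, and that the precise sense in which it is of jitter type is treated separately later.
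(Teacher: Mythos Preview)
Your outline is broadly aligned with the paper's: existence via projection of orbits on invariant cones, and deferral of the jitter-type analysis to the later sections. The place where your argument is thinner than the paper's is the stability analysis of the anticlockwise orbit.

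You write that the return map ``splits into a one-dimensional radial factor \dots\ and a transverse factor'' and that the eigenvalue computations of \cite{SSH2008} then ``directly yield'' the result. The invariance of the cone $C(\gamma)$ does give a block-triangular structure for the linearised Poincar\'e map, so one eigendirection lies along the ray through $E$ and the quotient is the linearised return map of the induced flow. But \cite{SSH2008} hands you three eigenvalues (one positive, two negative), and you have not said \emph{which} of these is the radial one to be discarded. The paper supplies the missing step with a sign argument: the cone $C(\gamma)$ over the triangular anticlockwise orbit is a piecewise-flat surface made of three planar triangles, and since the flow preserves this surface an orbit starting on one side of $\gamma\subset C(\gamma)$ stays on that side; hence the eigenvalue along the cone direction is positive. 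This pins down the positive eigenvalue as the radial one, so the two negative eigenvalues survive to the induced flow, and their behaviour (one in $(-1,0)$ always, the other crossing $-1$ at $\beta=\tau$) gives the saddle/attracting dichotomy. Without this identification your ``directly yield'' is a gap rather than a step.
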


That $\tilde \Gamma$ is a periodic orbit of `jitter type' 
means that it has the random walk behavior described in Theorem~\label{thm:inforbs}.

The induced flow is not smooth near the
periodic orbit $\tilde \Gamma$ corresponding to the Jitter set $J$.
Analyzing the local dynamics near $\Gamma$ is the main purpose of this paper
and is done in the final section of this paper.

\begin{proof}
The existence of the periodic orbits for
the induced flow follows immediately from the
existence of the corresponding orbits for the original system
(which were established in Sparrow et al \cite{SSH2008}).
In fact, the proof in the appendix in Sparrow et al \cite{SSH2008} shows that the induced
flow has a Shapley periodic orbit for $\beta\in [\sigma,1]$
even though the original flow does not (so for the original flow
the Shapley orbit spirals to $E$ when  $\beta\in [\sigma,1]$). 
Similarly the other two orbits exist for all $\beta\in (0,1]$.

So let us discuss the stability type.
If a periodic orbit of the original flow is
attracting (or repelling) then obviously the corresponding
periodic orbit of the induced flow  is also
attracting (repelling). If a periodic orbit $\gamma$
is originally of saddle-type then it depends on the
eigendirections: the  direction corresponding to the
(invariant) cone $C(\gamma)$ consisting of all rays from the
midpoint to the points on the periodic orbit $\gamma$
disappears in the induced flow. So we only need to 
consider the anticlockwise periodic orbit which we will still denote by $\gamma$.
In the appendix of Sparrow et al \cite{SSH2008} it was shown that
$\gamma$ consists of three line segments in $\Sigma$
and we computed the linear part of
the Poincar\'e transition map of $\gamma$ 
with sections taken in indifference planes.
These sections were taken
at two symmetrically positioned distinct points
computing in this way only the transition along one third
of $\gamma$. Because of the symmetry of the system, the actual return map to a section 
$Z$ is the third iterate of the linear map
computed in that appendix. Note that $Z$ 
was chosen to be contained in one of the indifference sets and so $Z$ contains $E$.
It was shown in Sparrow \cite{SSH2008} 
that  two eigenvalues are negative, and one positive (which was
equal to $n_2/n_1$). The positive eigenvalue
remains in $(0,1)$ for all $\beta\in (\sigma,1)$, one of the negative
eigenvalues remains in $(-1,0)$ for all $\beta\in (\sigma,1)$ whereas the
other negative eigenvalue is in $(-\infty,-1)$ for $\beta\in (\sigma,\tau)$
and in $(-1,0)$ for $\beta\in (\tau,1)$. For the induced
flow, the first return map has only two eigenvalues. 
Let us explain why the positive eigenvalue corresponds
to an eigenvector which lies in the cone $C(\gamma)$ and which 
therefore disappears after projecting to $\partial \Sigma$. 
Indeed, one of the eigenvalues of the linearisation at
$z:=Z\cap \gamma$ of the Poincar\'e map $P\colon Z\to Z$
lies in the cone $C(\gamma)$ (because this cone is invariant under the flow),
and so this eigenvalue is along the line segment $C(\gamma)\cap Z$
through $z$. This cone over the triangle $\gamma$ consists
of a surface made up of three (two-dimensional) triangles 
in $\Sigma$,  and so the corresponding eigenvalue
is positive (since the flow preserves $C(\gamma)$ and therefore an orbit
starting on one side of $\gamma\subset C(\gamma)$ remains on that side).
It follows that in the induced flow the
positive eigenvalue disappears under the projection.
The other two eigenvectors are projected, and their eigenvalues remain exactly the same
under the projection.  The original eigenvalue equation
now is solved modulo the direction corresponding
the projection, and so the two other eigenvalues stay the same for
the induced flow.  It follows that $\gamma$ is a saddle orbit if and only
if the corresponding orbit for the induced flow is a saddle orbit.
\end{proof}

\subsection{Many additional periodic orbits and chaos for the induced flow}

The next theorem is about the first return map to a section $Z$ based at some
point in $\tilde \Gamma$. It shows that orbits under this first return map
can move closer and further away from the fixed point. More precisely,
orbits can jump between the annuli $\{z\in Z; \dist(z,z_0)\in \left(\frac{1}{k_i+1},\frac{1}{k_i}\right)\}$
around $z_0$ in rather free way. Here $k_i$ corresponds to the number $N_i$ from 
Theorem~\ref{thm:dither}.  We refer to this behavior as of  `jitter type'.

\begin{theo}
\label{thm:inforbs}
Take $\beta\in (0,1)$ and consider the periodic orbit $\tilde \Gamma$ for the induced flow
corresponding to the jitter set $J$. Let $Z$ be a two-dimensional surface in $\partial \Sigma$
through some point $z_0\in \tilde \Gamma$ which is transversal to the induced flow,
and let $P$ be the Poincar\'e first return map of the induced flow to $Z$.  Then
\begin{itemize}
\item for each period $n\in \nz$,
there are infinitely many periodic orbits of $P$ 
of period $n$ (and one can even choose a sequence of  such periodic orbits 
so that the distance of the whole orbit to $z_0$ converges to zero);
\item there exist orbits which jitter in the following sense:
there exist a sum metric $\dist$ in $Z$ and $0<\lambda<1<\mu$ and $N_0$ so that 
for each sequence $k_i\in \nz$  so that
$$\lambda \le \dfrac{k_{i+1}}{k_i}\le \mu \mbox{ and }k_i\ge N_0,$$
there exists $z\in Z\setminus \Gamma$ with
$$\dist(P^i(z),z_0)\in (\frac{1}{k_i+2},\frac{1}{k_i})\mbox{ for all }i\ge 0.$$
\item the return map $P$ has subshifts of finite type, positive topological entropy 
 and has sensitive dependence on initial conditions. 
\end{itemize}
\end{theo}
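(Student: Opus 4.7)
The plan is to analyse the first return map $P\colon Z \to Z$ near $z_0 \in \tilde\Gamma$ by exploiting the piecewise linear structure of the induced flow on $\partial\Sigma$ together with the six-fold cyclic symmetry of the jitter set $J$. Since orbits on the cone $C(\tilde\Gamma)$ travel cyclically through the six legs listed in (\ref{eqn:defJ}), nearby orbits will also complete six-leg excursions close to $J$ before coming back to $Z$. The first step is to parameterise $Z$ by coordinates in which $z_0$ is the origin and one coordinate, say $r$, measures (roughly) the radial distance to $z_0$ while the other records the transverse position within the section. Using the fact that each leg of $\tilde\Gamma$ lies in a product of indifference surfaces, one computes $P$ leg-by-leg as a composition of six explicit affine transition maps, obtaining a piecewise affine map defined on a neighbourhood $U$ of $z_0$.

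The second step is to show that $P$ has a natural \emph{integer-indexed Markov partition} adapted to the jitter geometry. Concretely, I would partition $U$ into annular regions $A_k = \{z \in U : \dist(z,z_0) \in (1/(k+1),1/k)\}$ (or a slightly refined version respecting the transverse coordinate) and show, using the affine expressions for $P$ obtained above, that the image $P(A_k)$ meets precisely a bounded window of annuli $A_j$ with $j$ in a range of the form $[\lambda k - c, \mu k + c]$. This is the heart of the \emph{jitter} phenomenon: because the flow on $C(\tilde\Gamma)$ neither contracts nor expands uniformly (the radial eigenvalue of the formal linearisation lies on the unit circle), the image of an annulus lands on a bounded interval of other annuli rather than on a single one, in a manner analogous to one step of a bounded random walk on $\zz$ indexed by $\log k$. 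The constants $\lambda, \mu$ and $N^0$ in the theorem arise precisely as the extremes of this transition profile, once one restricts to a neighbourhood of $z_0$ small enough that the linear parts dominate nonlinear corrections.

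The third step is to exploit this Markov structure. Given any admissible sequence $(k_i)$ with $\lambda \le k_{i+1}/k_i \le \mu$ and $k_i \ge N^0$, the transversality of the affine maps and the standard graph-transform / shadowing argument for piecewise expanding skew products produces a point $z \in Z$ whose orbit visits $A_{k_i}$ at step $i$, giving the middle bullet of the theorem. For the first bullet, one takes the constant sequence $k_i \equiv k$ (or any periodic sequence of admissible $(k_i)$) to extract a fixed point of $P^n$ in $A_k$; letting $k \to \infty$ produces infinitely many periodic orbits of period $n$ accumulating on $z_0$. The third bullet is then essentially automatic: the coded dynamics contains the full shift on finitely many admissible transitions from the transition graph above, hence $P$ contains a subshift of finite type, has positive topological entropy, and exhibits sensitive dependence on initial conditions.

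The main obstacle I expect is the explicit computation that the six-leg composition $P$ really does have the claimed bounded-ratio transition structure rather than uniformly contracting or expanding behaviour. This requires showing that the product of the six leg-transition matrices has an eigenvalue equal to (or very close to) $1$ on the radial direction for every $\beta \in (0,1)$, so that the nonlinear higher-order terms drive the jitter dynamics; without this, either attraction or repulsion to $z_0$ would dominate and the theorem would fail. Verifying this algebraically using the explicit matrices $A_\beta, B_\beta$ from (\ref{eqn:AB}), and identifying the resulting first nontrivial term which produces the $1/k$-type annular scaling, is the technical crux. Once this is in hand, the symbolic and shadowing arguments above are standard.
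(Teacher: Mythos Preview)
Your overall architecture---annular Markov partition, symbolic dynamics, shadowing---is reasonable, but the mechanism you have identified for the jitter is wrong, and this gap is fatal to the computation you propose as the ``technical crux''.

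The transition along a single leg of $\tilde\Gamma$ is \emph{not} affine. Along each leg both players are simultaneously indifferent between two strategies, so nearby orbits \emph{spiral} around that leg: a point at distance $r$ from $\tilde\Gamma$ winds approximately $C/r$ times around before reaching the next section. Concretely (this is Proposition~\ref{spiralest} in the paper), the leg-transition map has the form
\[
z \;\longmapsto\; A_1 \circ R_{\,2\pi/\|w\| + B(w)} \circ A_0^{-1}(z), \qquad w = A_0^{-1}(z),
\]
where $R_\theta$ is a (quadrilateral) rotation and the rotation angle $\theta(w)\sim 2\pi/\|w\|$ \emph{blows up} as $w\to 0$. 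The first return map $P$ is a composition of such maps, and in particular is not piecewise affine and not even $C^1$ at $z_0$; there is no ``product of six leg-transition matrices'' to diagonalise, and no radial eigenvalue on the unit circle to check. Your proposed verification---that such an eigenvalue equals $1$ for all $\beta$---is looking for structure that is not there.

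The correct source of the bounded-ratio transitions $\lambda\le k_{i+1}/k_i\le\mu$ is the anisotropy of the linear pieces $A_0^{-1}A_1$ (equivalently, the mismatch between the invariant quadrangles at the two ends of each leg). Because the rotation angle varies so fast in $r$, the rotated image of an annulus $A_k$ sweeps through all angular positions; the subsequent linear map then stretches different directions by different amounts, landing the image across annuli $A_j$ with $j/k$ ranging over an interval $[\lambda,\mu]$ determined by the extreme ratios of the quadrangle corners. The paper computes these quadrangles explicitly (Propositions~\ref{prop:quadr} and~\ref{prop:quadr1}) and verifies that the ratios straddle $1$ for every $\beta\in(0,1)$. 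The existence of periodic orbits and of orbits following prescribed itineraries is then obtained not by a graph-transform argument but by solving a finite system of angle-matching equations modulo $2\pi$ via Brouwer's fixed point theorem (see the equations~(\ref{thrc}) and~(\ref{thrcc})). Your shadowing idea might be adaptable once the correct form of $P$ is in hand, but the affine picture you start from has to be replaced entirely by this rotation-with-divergent-angle model.
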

\begin{proof}
We will give the proof of this theorem in Section~\ref{sec:proofoftheorem}.
\end{proof}

Denote the period of the periodic orbit $\tilde \Gamma$ under the induced flow by $T$. 
Since the induced flow is continuous, a periodic orbit $\tau_n$ which corresponds
to a period $n$ of the first return map $P$ and which is close to $\tilde  \Gamma$
has, under the induced flow, period approximately $n \cdot T$ (and the closer the orbit
is chosen to $\tilde \Gamma$ the better this approximation is).
Hence

\begin{cor}
Take $\beta\in (0,1)$ and consider the periodic orbit $\tilde \Gamma$ for the induced flow
corresponding to the jitter set $J$. Let $n\in \nz$ be arbitrary.
There exists a sequence of periodic orbits $\tau_k$, $k=1,2,\dots$ for the induced flow arbitrarily close to $\tilde \Gamma$ whose period converges to $nT$ as $k\to \infty$.
\end{cor}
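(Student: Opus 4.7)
The plan is to deduce the corollary directly from the first bullet of Theorem~\ref{thm:inforbs}, combined with continuity of the first-return time of the induced flow to the transversal section $Z$ at the point $z_0\in \tilde \Gamma$.

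First I would apply that bullet for the given period $n$ to obtain, for each $k\ge 1$, a periodic orbit $\{z_k^{(0)},z_k^{(1)},\dots,z_k^{(n-1)}\}\subset Z$ of the Poincar\'e return map $P$ of period exactly $n$, chosen so that
$$\max_{0\le j\le n-1}\dist(z_k^{(j)},z_0)\longrightarrow 0 \mbox{ as }k\to\infty.$$
Each such periodic orbit of $P$ determines a periodic orbit $\tau_k$ of the induced flow on $\partial \Sigma$, namely the concatenation of the flow segments joining $z_k^{(j)}$ to $z_k^{(j+1)}$ for $j=0,\dots,n-1$ (indices mod $n$). Since the induced flow is continuous, orbits of $P$ lying uniformly close to $z_0$ sweep out flow orbits lying uniformly close to the compact curve $\tilde \Gamma$, so $\tau_k\to \tilde \Gamma$ in the Hausdorff sense; this gives the ``arbitrarily close to $\tilde \Gamma$'' part of the conclusion.

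Next I would control the period of $\tau_k$. Let $\rho\colon Z\to (0,\infty)$ be the return-time function of the induced flow to $Z$; it is well defined on a neighbourhood of $z_0$ because $Z$ is transversal to the flow there and $\tilde \Gamma$ has period $T$, so $\rho(z_0)=T$. The period of $\tau_k$ is exactly
$$T_k=\sum_{j=0}^{n-1}\rho(z_k^{(j)}),$$
so the corollary reduces to showing $\rho(z_k^{(j)})\to T$ for each $j$ as $k\to\infty$.

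The main obstacle is continuity of $\rho$ at $z_0$, given that the induced flow is not smooth near $\tilde \Gamma$. However, only continuity is required, and this follows from continuity of the induced flow on $\partial\Sigma$ (inherited from continuity of the Shapley flow outside $E$, recalled in Section~\ref{s:induced}) together with transversality of $Z$ to the flow at $z_0$: orbits starting uniformly close to $z_0$ stay uniformly close to the compact arc of $\tilde \Gamma$ of length $T$, and hence cross $Z$ at a time uniformly close to $T$. Combining this with the two displayed formulas above yields $T_k\to nT$, completing the argument.
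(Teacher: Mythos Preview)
Your proposal is correct and follows essentially the same approach as the paper: the paper's argument (contained in the paragraph immediately preceding the corollary) is simply that continuity of the induced flow forces a period-$n$ orbit of $P$ close to $\tilde\Gamma$ to have flow-period close to $nT$, and then one invokes the first bullet of Theorem~\ref{thm:inforbs} to produce such orbits with the whole $P$-orbit shrinking to $z_0$. Your write-up just makes the return-time continuity step more explicit than the paper does.
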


Furthermore,

\begin{cor}
Take $\beta\in (0,1)$.
The induced flow contains subshifts of finite type, positive topological entropy 
and has sensitive dependence on initial conditions. 
\end{cor}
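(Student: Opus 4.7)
The plan is to deduce this corollary from Theorem~\ref{thm:inforbs} by the standard suspension construction, which converts dynamical properties of a Poincar\'e first return map into properties of the flow that produced it. Concretely, I fix the transversal section $Z\subset\partial\Sigma$ through $z_0\in\tilde\Gamma$ used in Theorem~\ref{thm:inforbs}, together with the compact invariant set $\Lambda\subset Z$ on which $P$ carries the subshift structure. Since the induced flow is continuous on a neighbourhood of $\tilde\Gamma$ (as observed just before the statement of the corollary), the first return time function $r\colon\Lambda\to(0,\infty)$ is continuous and bounded away from $0$ and $\infty$. The flow restricted to the saturation $\Lambda_\varphi:=\bigcup_{0\le t\le r(z)}\varphi_t(\Lambda)$ is then topologically conjugate to the suspension flow $(\Lambda,P,r)$.

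Each of the three conclusions now transfers through the suspension. For subshifts of finite type: by Theorem~\ref{thm:inforbs} there is a closed invariant subset of $\Lambda$ on which $P$ is semiconjugate to a one-sided subshift of finite type $\sigma_M$; its suspension under the (continuous, bounded) roof $r$ is by definition a subshift of finite type for the flow, and the corresponding semiconjugacy to the induced flow on the saturation gives the claim. For positive topological entropy I use Abramov's formula, $h_{\mathrm{top}}(\varphi)=h_{\mathrm{top}}(P|_\Lambda)/\bar r$ where $\bar r$ is the average return time with respect to a measure of maximal entropy; since $\bar r<\infty$ and $h_{\mathrm{top}}(P|_\Lambda)>0$, we get $h_{\mathrm{top}}(\varphi)>0$. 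For sensitive dependence on initial conditions with constant $\delta>0$ for $P$: any two points $p,q\in\Lambda_\varphi$ close in $\partial\Sigma$ can be flowed to $Z$ along short orbit segments (using continuity of the flow near $\tilde\Gamma$), producing two points on $Z$ which are also close; applying sensitive dependence for $P$ yields iterates that are $\delta$-separated on $Z$, and this separation propagates to $\delta/2$-separation of corresponding flow positions by uniform continuity of $\varphi$ on the compact piece of time needed, so the induced flow inherits sensitive dependence.

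The only subtlety is regularity of the induced flow near $\tilde\Gamma$: the flow is not smooth there, and $Z$ may meet indifference surfaces along which tie-breaking is required. This is however already absorbed into the statement of Theorem~\ref{thm:inforbs}, whose conclusions are formulated purely in terms of the continuous (Lipschitz) first return map $P$; the invariant set $\Lambda$ carrying the chaotic dynamics is contained in the region where $P$ is continuous and $r$ is bounded. Thus no new dynamical analysis is needed beyond Theorem~\ref{thm:inforbs} itself, and the main obstacle, if any, is just checking that the return time function remains bounded on $\Lambda$, which follows from compactness of $\Lambda$, continuity of the flow on a neighbourhood of $\tilde\Gamma$, and transversality of $Z$ to the flow at $z_0$ (and hence on a whole neighbourhood of $z_0$ in $Z$ containing $\Lambda$).
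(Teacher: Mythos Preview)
Your proposal is correct and is precisely the standard suspension argument that the paper leaves implicit: the paper states this corollary without proof, treating it as an immediate consequence of Theorem~\ref{thm:inforbs} (the remark just above, that the induced flow is continuous and that orbits of $P$-period $n$ near $\tilde\Gamma$ have flow-period approximately $nT$, is exactly the bounded-return-time fact you need). Your write-up simply supplies the details the authors omit, via the suspension, Abramov's formula, and the transfer of sensitive dependence through bounded continuous return times.
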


\subsection{Global return section}

\begin{figure}[htp] \hfil
\includegraphics[width=3in]{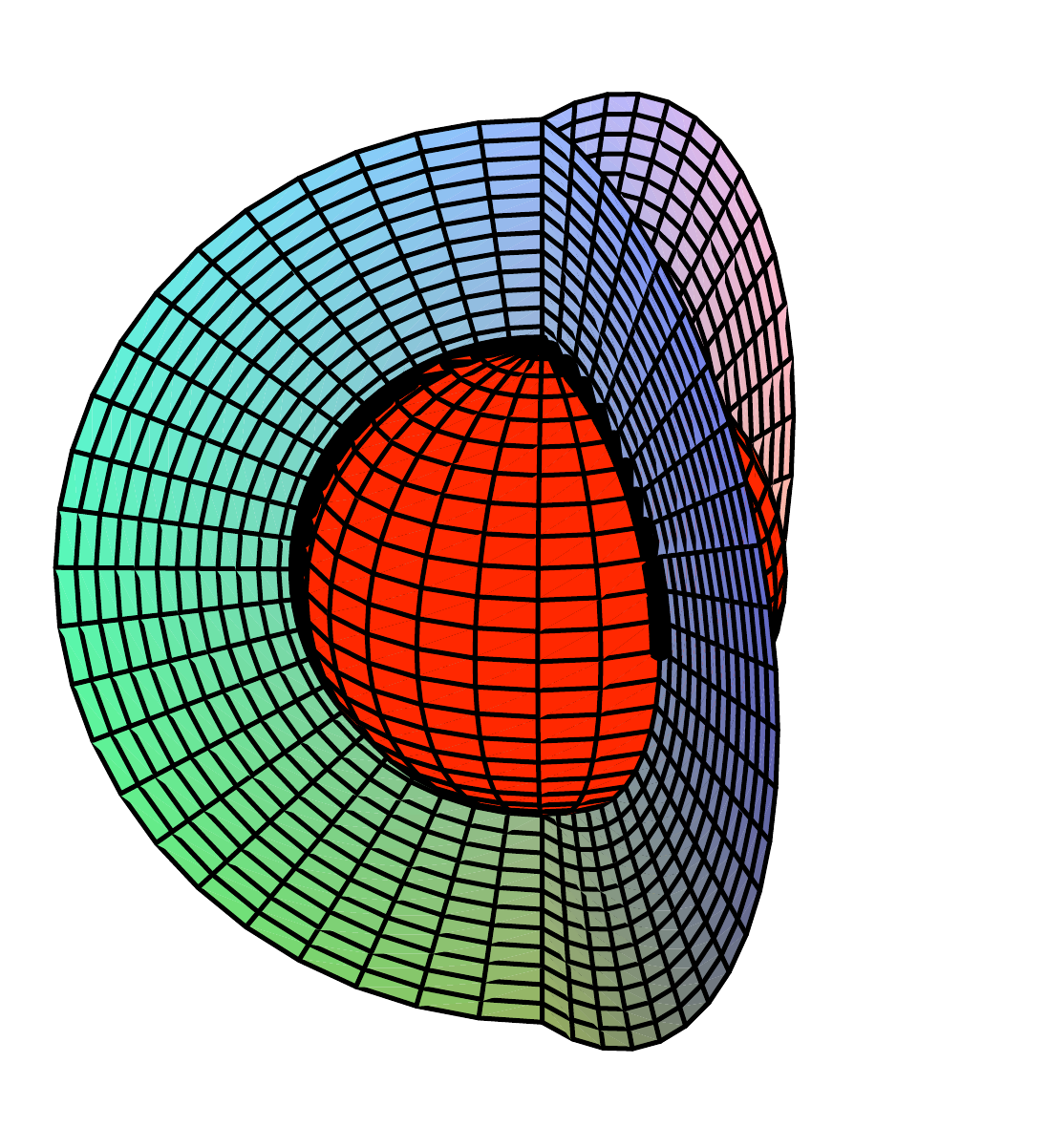}
\caption{\label{figs3}The subset of $\partial \Sigma$ where players $A$ and $B$
are indifferent, where we identify $\partial \Sigma$ with $\rz^3\cup \{\infty\}$.
Part of the periodic orbit $\tilde \Gamma$ is drawn as a fat curve. Another view
of this periodic orbit is in Figure~\ref{figs3gamma} (rotated 180 degrees about the $z$-axis).}
\end{figure}

\begin{figure}[htp] \hfil
\includegraphics[width=3in]{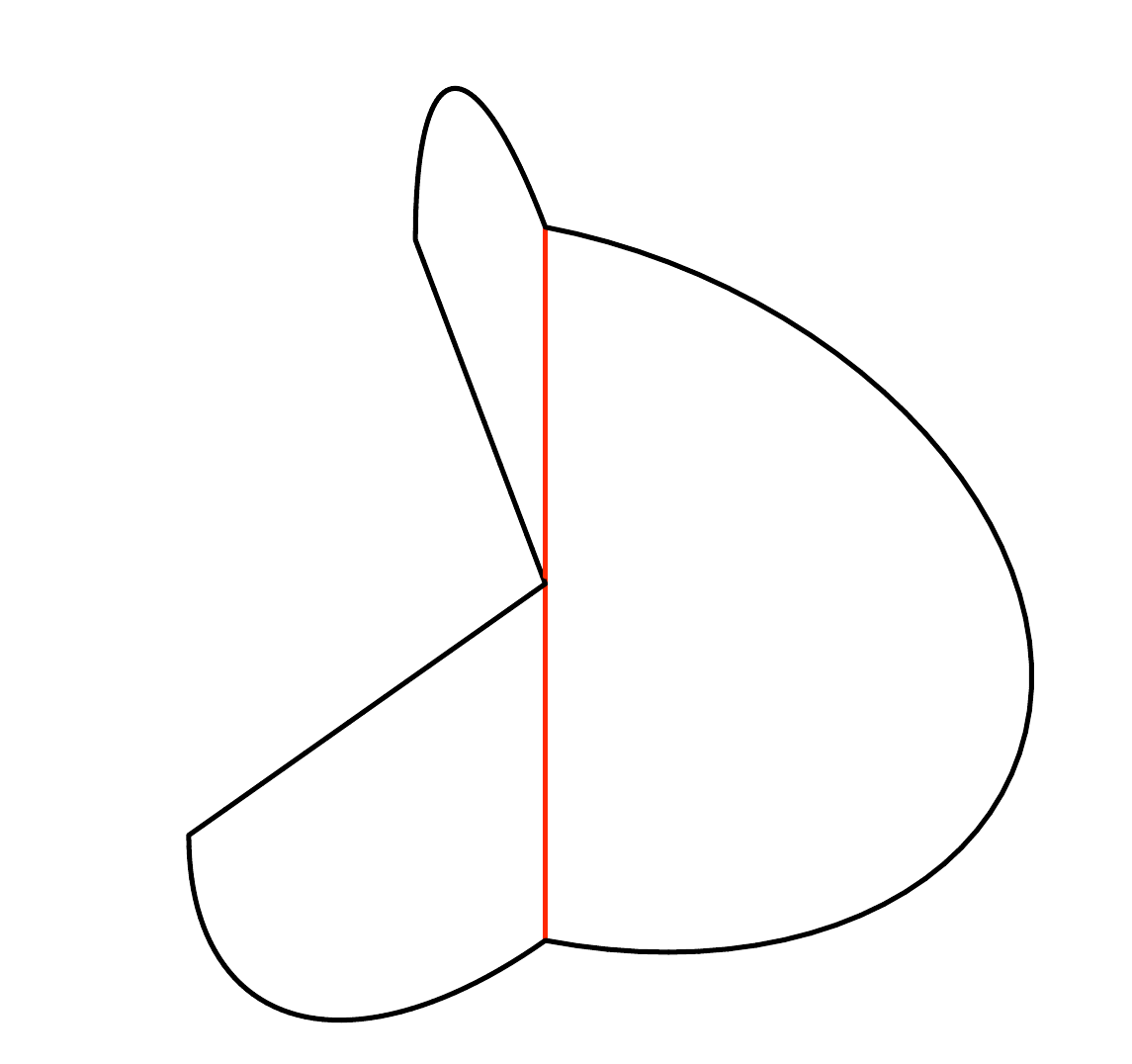}
\caption{\label{figs3gamma}The periodic orbit $\tilde \Gamma$ in the Jitter set drawn in $\partial \Sigma$ where we identify $\partial \Sigma$ with $\rz^3\cup \{\infty\}$. The vertical line is not part of the curve, but  is drawn for clarity and because it forms the part where the
four pieces from the global return section, introduced in Proposition~\ref{prop:section} meet
(the half-disc, and the two quarter discs, which meet along the vertical axis). The global return 
surface $S$ is the surface consisting of the half-disc, and two quarter discs
meeting at the z-axis. The return section $Z$ mentioned in Theorem~\ref{thm:inforbs} is 
transversal to $\tilde \Gamma$ and so is {\em not} contained in $S$. }
\end{figure}

Remember that $\Sigma$ is a ball in $\rz^4$
and $\partial \Sigma$ is homeomorphic to
$S^3$. This set can be thought of as $\rz^3\cup \{\infty\}$.
Usually it is not easy to easy to give a geometric
image of dynamics on $S^3$. 
But in fact, we are lucky. Instead of taking a section through a point $z\in \tilde  \Gamma$ transversal
to the periodic
orbit $\tilde \Gamma$, we can find a set $S$ which is topologically a disc,
such that $\partial S=\tilde \Gamma$, and such that
orbits cross $S\setminus \tilde \Gamma$ transversally.
This disc lies in the indifference sets.
The subset of $\partial \Sigma$
 where player $A$ is indifferent between strategies $i$ and $j$ is 
equal to $(\partial \Sigma_A\times Z_{ij}^A)\cup ( \Sigma_A\times (Z_{ij}^A\cap \partial \Sigma_B))$.
This two-dimensional set is equal to a triangular tube together with a triangle at one
end of the tube, in other words, it is homeomorphic to a topological disc.
The boundary of this disc corresponds to the triangle $\partial \Sigma_A\times E^B$.
Note that for each pair $i,j$ the boundary of this disc is the same.
So if we identify $\partial \Sigma$ with $\rz^3\cup \{\infty\}$, the 
set where player $A$ is indifferent between two strategies
can be thought of as the union of the upper and lower part of the
unit sphere in $\rz^3$ (i.e. $\{(x,y,z)\st x^2+y^2+z^2=1 \mbox{ and } z \ne 0\}$)
and the disc $\{(x,y,z) \st x^2+y^2\le 1\mbox{ and }z=0\}$.
Similarly, the set where $B$ is indifferent can then
be thought of as
$\{(x,y,z)\st (x,y)=(r\cos \phi, r\sin \phi) \mbox{ with }r\ge 0
\mbox{ and }\phi=2\pi/3,4\pi/3,0\} \cup \{\infty\}$.
The choice for $\phi$ represents which of the two strategies
are indifferent for $B$. Again this represents three discs
which all meet along the circle
$\{(x,y,z) \st x=y=0\}\cup \{\infty\}$ in $\rz^3\cup \{\infty\}$.
The orbit $\Gamma$ lies on the intersection of the
sets where $A$ and $B$ are indifferent and is drawn in Figure~\ref{figs3gamma}.
It turns out that one can find a subset $S$ of the space
where player $A$ is indifferent, such that $\partial S=\tilde \Gamma$
and for which the orbits go through $S$ transversally.

\begin{prop}
\label{prop:section}
There exists a topological disc $S$ in
$\partial\Sigma$ with the following properties
\begin{itemize}
\item $S$ is a piecewise linear;
\item $\partial S=\tilde \Gamma$;
\item each orbit of the induced flow (except $\tilde \Gamma$)
intersects $\tilde \Gamma$ transversally;
\item the Poincar\'e return map to $S$ is a well-defined
homeomorphism.
\end{itemize}
\end{prop}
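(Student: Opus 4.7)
The plan is to construct $S$ explicitly as a piecewise linear disc made up of three pieces, one lying in each of the three $A$-indifference surfaces $Z^A_{1,2}$, $Z^A_{2,3}$, $Z^A_{3,1}$ (extended to $\partial\Sigma$), glued along the common ``spine'' where all three sheets intersect, namely the locus where $A$ is indifferent between all three strategies. From the formula (\ref{eqn:defJ}) one sees that $\tilde\Gamma$ traverses each of the three sheets $Z^A_{i,j}$ along exactly two of its six legs, and in each sheet these two legs together with a finite segment of the spine bound a two-dimensional PL region which I take as the corresponding piece of $S$. Gluing the three pieces along the spine gives a PL surface; the interior spine segments cancel out when computing the boundary, leaving $\partial S = \tilde\Gamma$ as required. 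Because each piece is a topological disc and the three are attached along a common arc, $S$ is itself a topological disc, matching the half-disc-plus-two-quarter-discs description of Figure~\ref{figs3gamma}.

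For transversality I would exploit that the induced flow is piecewise linear on the finitely many best-response cells cut out by the indifference surfaces. At a point $p \in S \setminus \tilde\Gamma$ lying in a sheet $Z^A_{i,j}$, player $B$'s best response is unambiguous while $A$ transitions between strategies $i$ and $j$; transversal crossing of $Z^A_{i,j}$ at $p$ is then equivalent to the velocity vectors in the two adjacent cells having nonzero component along the normal to $Z^A_{i,j}$. This is precisely the transversality condition established for the Shapley family in Sparrow et al \cite{SSH2008}, which guarantees uniqueness of solutions away from $E$. At the interior spine points of $S$, player $A$ is indifferent between all three strategies but $B$'s best response remains unambiguous, so the flow is determined by $B$'s response alone, and a direct linear computation using the matrices (\ref{eqn:AB}) shows that this velocity has nonzero component against $S$.

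Well-definedness of the first return map requires showing that every orbit of the induced flow other than $\tilde\Gamma$ meets $S$ in finite forward time. The key step is a case analysis over best-response cells: in each open cell the flow is a constant vector field pointing towards the corresponding vertex, so any orbit exits its cell through one of its bounding indifference faces within a bounded time. I would check by inspection of (\ref{eqn:defJ}) and Figure~\ref{fig:Jorbit}, cell-by-cell, that whenever such an exit occurs through an $A$-indifference face, the exit point lies in one of the three pieces of $S$ rather than in their complement within that sheet; in particular, the complementary regions of the $A$-indifference surfaces are not forward-invariant under the flow. Iterating this, every orbit is forced to cyclically visit the three pieces of $S$, yielding a globally defined return map.

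Once well-definedness is granted, $P\colon S \to S$ is continuous because the PL flow depends continuously on initial conditions off $E$, injective because the induced flow is a genuine flow off $E$ (reversible in time), and surjective by applying the same analysis to the time-reversed flow. A continuous bijection of the compact disc $S$ is a homeomorphism, which completes the proof. The main obstacle I anticipate is the combinatorial bookkeeping in the well-definedness step: one must verify, systematically for each best-response cell, that the flow's straight-line exit through the $A$-indifference set lands inside one of the three chosen sub-discs rather than in the discarded complementary regions, with extra care required near the spine and near $\tilde\Gamma$ itself, where several pieces of $S$ meet and where multiple indifferences coincide.
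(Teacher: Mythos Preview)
Your overall strategy---build $S$ inside the $A$-indifference surfaces, verify transversality, and show every orbit must return---is the same as the paper's. The construction itself, however, differs in a way that matters.

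The paper does \emph{not} take one piece in each of the three sheets $Z^A_{i,j}$ glued along the ``spine'' $\partial\Sigma_A\times\{E^B\}$. Instead it takes \emph{four} pieces, each of the form $U_k\times Z^A_{i,j}$ (intersected with $\partial\Sigma$), where $U_k\subset\Sigma_A$ is the region in which player $B$ strictly prefers strategy $k$; specifically $U_3\times Z^A_{1,3}$, $U_3\times Z^A_{2,3}$, $U_1\times Z^A_{1,2}$, $U_1\times Z^A_{3,1}$. The four pieces meet along the locus where $B$ is indifferent between $1$ and $3$ (the vertical axis in Figure~\ref{figs3gamma}), not along the $A$-triple-indifference spine. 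This buys two things. First, transversality is immediate: on each piece $B$'s best response is a fixed pure strategy, so $dp^B/dt$ has a definite nonzero component across $Z^A_{i,j}$, and no separate argument at a triple-indifference locus is needed. Second, the four pieces---a half-disc and two quarter-discs---fit together as a genuine disc by inspection. The paper then disposes of well-definedness in one line by citing the itinerary diagram in Sparrow et al.\ \cite{SSH2008}, rather than the cell-by-cell bookkeeping you propose.

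Your three-piece construction may be salvageable, but as written it has a gap at the gluing step. All three $A$-indifference sheets contain the \emph{same} spine $\partial\Sigma_A\times\{E^B\}$, so if the three pieces are glued along a common spine arc you obtain a ``book'' with three pages meeting along an edge, not a topological disc. You would need to specify carefully which sub-arc of the spine bounds each piece and check that the pieces match up pairwise (not triply) along these arcs; your proposal does not do this. Relatedly, your transversality argument at the spine is incomplete: at $p^B=E^B$ player $A$ is indifferent among all three strategies, so the differential inclusion is genuinely multi-valued there and ``determined by $B$'s response alone'' does not settle how the solution crosses $S$. The paper's decomposition sidesteps both issues by never placing the gluing locus on the $A$-spine.
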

\begin{proof}
Let $S$ consists of four pieces within
the linear indifference sets $\Sigma_A\times Z^A_{i,j}$.
These pieces are $U_3\times Z^A_{1,3}$, $U_3\times Z^A_{2,3}$,
$U_1\times Z^A_{1,2}$, $U_1\times Z^A_{3,1}$,
where $U_i$ is the part of $\Sigma_A$ where player $B$
prefers to head for corner $P_i^A$.
The flow is transversal to each of these four linear sets.
Also, inspection in the diagram of Figure 7 in Sparrow et al
\cite{SSH2008} 
shows that no orbit can miss these
sets (except if it is in $J$).
The section $S$ forms a disc made up from the semi-disc and the two-half semi-discs
depicted in Figure~\ref{figs3gamma}. \end{proof}

Instead as in Figure~\ref{figs3gamma}, we can also visualise the 
section $S$ as in Figure~\ref{fig:S-triangles}. Indeed, $S\subset \partial \Sigma$ 
is made up of the following regions:

\begin{enumerate}
\item  (the two triangles in 
$\Sigma_A$ where player $B$ heads for $1$ resp. $3$)  \, $\times$ \, (the point $R^B_{12}$); 
\item (the part of  $\partial \Sigma_A$  where player $B$ heads for $1$ resp. $3$) \, $\times$ \, 
(the segment $[R^B_{12},E^B]$ in $\Sigma_B$); 
\item (the part of $\partial \Sigma_A$  where player $B$ heads for $3$)
 \, $\times$ \, (the segment  $[R^B_{23},E^B]$ in $\Sigma_B$); 
\item (the triangle $\Sigma_A$  where player $B$ heads for $3$)
 \, $\times$ \, (the point  $R^B_{23}$); 
\item  (the part of $\partial \Sigma_A$  where player $B$ heads for $1$)
 \, $\times$ \, (the segment  $[R^B_{31},E^B]$ in $\Sigma_B$); 
\item (the  part of  $\Sigma_A$  where player $B$ heads for $1$)
 \, $\times$ \, (the point $R^B_{23}$).
\end{enumerate}
Using polar-like coordinates based at $E^A\in \Sigma_A$,
the region 1 can be represented as an isosceles triangle (with, say, angles
$80, 50,50$). Attaching region 2 then gives a similar triangle (which is the bigger triangle in 
Figure~\ref{fig:S-triangles}).
Joining all these regions together appropriately gives
that $S$ can also be represented as in  Figure~\ref{fig:S-triangles}. In this figure
we also show an orbit of the first return map for $\beta=\sigma$ (the zero-sum case).
The orbits, which tend to the Nash equilibrium in the full flow, do so in a rather chaotic fashion.

\begin{figure}[htp] \hfil
\includegraphics[width=3in]{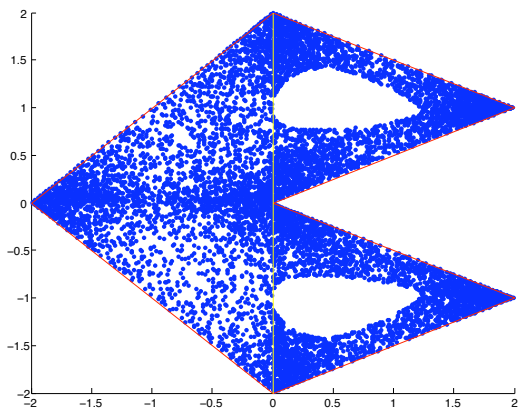}
\caption{\label{fig:S-triangles}The orbit of a typical starting point
under the first return map to $S$ 
when $\beta=\sigma$. The dynamics is similar to that of an area preserving map. There are
two 'egg-shaped' regions which are permuted by the first return map. Orbits within this
region form invariant circles.}
\end{figure}

%


\section{Dynamics for the original flow in $\Sigma$}
\label{s:dyninr4}

Let us now state the implications of the results
on the induced flow from the last section for the original flow.

\subsection{Invariant cones}
\label{ss:implics}

The implications of Theorem \ref{thm:inforbs} for the real
flow in $\Sigma_A \times \Sigma_B$ depend on the value of $\beta$,
as described in the following proposition.

\begin{prop}\label{prop:implicationsigma}
For the original flow the statements from
Theorem \ref{thm:inforbs} still hold in the following sense.
\begin{itemize}
\item For $\beta \in (0,\sigma)$ each periodic orbit $\gamma$ near $\tilde \Gamma$ 
of the induced flow,
corresponds to an invariant cone $C(\gamma)$. Each orbit of the original 
flow starting in such a cone reaches the interior equilibrium in finite time 
(and then remains there).
\item  For $\beta \in (\sigma,1)$ periodic orbits near $\tilde \Gamma$ in $\partial \Sigma$
correspond to periodic orbits near $\Gamma$ on $J$. Thus there are
infinitely many orbits of the flow in this parameter range.
Moreover, there are orbits which jitter
in the sense of the 2nd assertion of Theorem \ref{thm:inforbs}
and the flow contains a subshift of finite type,
has  positive topological entropy and has sensitive dependence on initial conditions.
\end{itemize}
\label{prop:implics}
\end{prop}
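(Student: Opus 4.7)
The plan is to transfer each assertion of Theorem~\ref{thm:inforbs} about the induced flow on $\partial\Sigma$ to the original flow on $\Sigma$ by exploiting the projection $\pi\colon\Sigma\setminus\{E\}\to\partial\Sigma$. The essential point, already noted in Section~\ref{s:induced}, is that $\pi$ semi-conjugates the original flow to the induced flow, so any forward-invariant set $Y\subset\partial\Sigma$ of the induced flow lifts to a forward-invariant cone $C(Y)\subset\Sigma$ of the original flow. Applied to the periodic orbits $\gamma$ near $\tilde\Gamma$ furnished by Theorem~\ref{thm:inforbs}, this yields invariant two-dimensional cones $C(\gamma)$; the first-return map of the original flow to a three-dimensional transversal section $\tilde Z$ through a point $z_0\in\tilde\Gamma$ splits, in radial/angular coordinates $(r,v)$, as $(r,v)\mapsto(F(r,v),P(v))$, where $P$ is the induced return map of Theorem~\ref{thm:inforbs} and $F(\cdot,v)$ is a piecewise affine scalar map obtained by composing the explicit solutions $p(t)=BR+(p_0-BR)e^{-t}$ along one angular circuit. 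The angular component being independent of $r$ uses the key fact that each best-response region is itself a cone with apex $E$, so that the combinatorial itinerary depends only on $v$.

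For $\beta\in(0,\sigma)$, the first bullet reduces to the behaviour of the original flow on $C(\tilde\Gamma)=J$, established in Section~2: orbits there reach $E$ in finite time. This amounts to a strict geometric inequality on the composed piecewise affine map $F(\cdot,v_0)$ at $v_0\in\tilde\Gamma$; since the coefficients of $F(\cdot,v)$ depend continuously on $v$ and on the periodic orbit $\gamma$, the same inequality persists for all $v\in\gamma$ whenever $\gamma$ is sufficiently close to $\tilde\Gamma$. Hence orbits on $C(\gamma)$ again hit $E$ in finite time. The dithering assertion for $\beta<\sigma$ follows by the same lifting principle, since the dithering code of an orbit is entirely determined by its angular projection.

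For $\beta\in(\sigma,1)$, the genuine periodic orbit $\Gamma\subset J$ of the original flow provides a fixed point $r_\Gamma>0$ of the radial map $F(\cdot,v_0)$ at each $v_0\in\tilde\Gamma$. Continuity of $F$ in $(r,v,\gamma)$ then allows an intermediate-value argument to yield a fixed point $r_\gamma$ of the composed one-dimensional radial return map for every periodic orbit $\gamma$ of the induced flow close enough to $\tilde\Gamma$; this is the ``corresponding'' periodic orbit of the original flow on $C(\gamma)$, lying near $\Gamma$. Since Theorem~\ref{thm:inforbs} produces infinitely many such $\gamma$'s, one obtains infinitely many periodic orbits of the original flow. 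The remaining jitter, subshift, positive-entropy and sensitive-dependence conclusions follow by lifting the invariant sets furnished by Theorem~\ref{thm:inforbs} to invariant subsets of $\Sigma$ contained in a compact neighbourhood of $\Gamma$ bounded away from $E$; on such a compact set $\pi$ is a continuous surjection onto the corresponding subset of $\partial\Sigma$ and so the semi-conjugacy lets subshifts of finite type, topological entropy and sensitive dependence all pass upward.

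The main obstacle is the persistence of the radial fixed point $r_\gamma$ as $\gamma$ varies near $\tilde\Gamma$: because $\tilde\Gamma$ is of \emph{jitter type}, the induced return map $P$ is non-hyperbolic at $z_0$ and no standard implicit-function or stable-manifold theorem applies. Overcoming this requires the explicit piecewise-linear description of the flow together with the detailed analysis of the first return map near $\tilde\Gamma$ carried out in the final section of the paper; that analysis exhibits the precise scaling structure of $F(\cdot,v)$ and shows that for every nearby $\gamma$ the map $F(\cdot,v)$ crosses the diagonal, producing the required fixed point $r_\gamma$.
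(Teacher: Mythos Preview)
Your skew-product decomposition $(r,v)\mapsto(F(r,v),P(v))$ is exactly the structure the paper uses (written there as $R(x,t)=(\tilde R(x),\rho_x(t))$ with $t$ the radial coordinate), and your continuity argument for the persistence of the radial fixed point is correct. However, your final paragraph misidentifies the difficulty and then invokes machinery that is not needed.

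The non-hyperbolicity of the \emph{angular} map $P$ at $z_0$ is irrelevant to the persistence of the \emph{radial} fixed point. The periodic orbits $\gamma$ of $P$ are handed to you by Theorem~\ref{thm:inforbs}; nothing further about $P$ is required. What matters is the hyperbolicity of the one-dimensional radial map $\rho_{x_0}$, and this is precisely what the paper cites from \cite{SSH2008}: for $\beta\in(0,\sigma)$ the point $0$ is a hyperbolic attracting fixed point of $\rho_{x_0}$, while for $\beta\in(\sigma,1)$ there is a hyperbolic attracting fixed point $t_0>0$ (with $0$ repelling). Given this, if $\gamma=\{x,\tilde R(x),\dots,\tilde R^{n-1}(x)\}$ is a periodic orbit of $P$ with all iterates close to $x_0$, then each $\rho_{\tilde R^k(x)}$ is a contraction on a fixed neighbourhood of $t_0$ (resp.\ of $0$), and hence so is the composition $\rho_{\tilde R^{n-1}(x)}\circ\cdots\circ\rho_x$, uniformly in $n$. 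The Banach fixed-point theorem (or your intermediate-value argument) then gives the required radial fixed point, with no appeal to the piecewise-linear analysis of the final section. That analysis is needed to prove Theorem~\ref{thm:inforbs} itself, not to pass from the induced flow back to the original one.

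In short: drop the last paragraph, and replace your intermediate-value step by the observation that the radial map at $x_0$ is a hyperbolic contraction (citing \cite{SSH2008}), which persists under composition with nearby maps. This is both simpler and gives the extra information that the resulting periodic orbits of the original flow are attracting in the radial direction.
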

\begin{proof}
In Prop A2 in Sparrow et al \cite{SSH2008} it was shown that each orbit
in $J$ tends to $E$ when $\beta\in (0,\sigma)$. In fact, on page 290 of that
paper it was shown that if we take a section in $J$, then 
orbits converge exponentially fast to zero under iterates of
the Poincar\'e map; moreover the time to spiral into the equilibrium $E$ is finite.

Now let $\tilde V$ be a plane in $\tilde \Sigma$
through a point $x_0\in \tilde \Gamma$ transversal 
to $\tilde \Gamma$ and $\tilde R$ be the first return map to $\tilde V$ corresponding
to the induced flow.
Identify $\Sigma$ with $\partial \Sigma\times [0,1]$
where $(x,0)\in \partial \Sigma\times [0,1]$ corresponds to $E$
and let $V=\tilde V\times [0,1]$. Let $R$ be the first return map to $V$
of the original flow. Then $R$ is of the form $R(x,t)=(\tilde R(x),\rho_x(t))$.
As we noted above, it was shown in Sparrow et al \cite{SSH2008} that 
$t\mapsto \rho_x(t)$ has derivative less than one when $x=x_0$ and 
$\beta\in (0,\sigma)$.
Note that
$$R^n(x,t)=(\tilde R^n(x),\rho_{\tilde R^{n-1}(x)}\circ \dots
\circ \rho_x (t))$$
Since for $\beta \in (0,\sigma)$,  $0$ is a hyperbolic 
attracting fixed point of $\rho_{x_0}\colon \rz^+\to \rz^+$, 
if $\tilde R^n(x)=x$, $x\in \partial \Sigma$ and $x,\dots,\tilde R^{n-1}(x)$ are all sufficiently close
to $x_0$ then $t\mapsto \rho_{R^{n-1}(x)}\circ \dots \rho_x (t)$
still has an attracting fixed point at $0$.
So the periodic point $x$ for the
induced flow corresponds to an orbit which tends towards
the equilibrium point $E$ as $t\to \infty$ for the original flow.
(Because of the parametrisation, orbits actually 
reach $E$ in finite time; during this time the orbit switches infinitely often between
strategies.)

On the other hand, if $\beta \in (\sigma,1)$, then there exists a 
$t_0\in (0,1)$ so that $\Gamma\cap V$ corresponds to $(x_0,t_0)$.
It was shown in Sparrow et al \cite{SSH2008} that $t_0$ is a hyperbolic attracting fixed point
(and $0$ a repelling fixed point)
of $\rho_z\colon \rz^+\to \rz^+$, where $t_0>0$ is so that
$\tilde z=(z,t_0)\in \Gamma$. So if the periodic point
$T^n(x)=x$ is sufficiently close
to $z$ then $t\mapsto \rho_{T^{n-1}(x)}\circ \dots \circ \rho_x (t)$
still has an attracting fixed point $t$ near $t_0$.
It follows that to each periodic point $x$ of the induced
flow is associated a periodic point $(x,t)$ of
the Poincar\'e return map of the flow (of the same period).
In the same way one can prove that there are
invariant sets which correspond to
the second and third assertions of Theorem~\ref{thm:inforbs}.
\end{proof}


\section{Proof of Theorem~\ref{thm:inforbs}}
\label{sec:proofoftheorem}
\subsection{A geometric description of the flow near the Jitter set}
\label{ss:newbit}

\begin{figure}[htp]
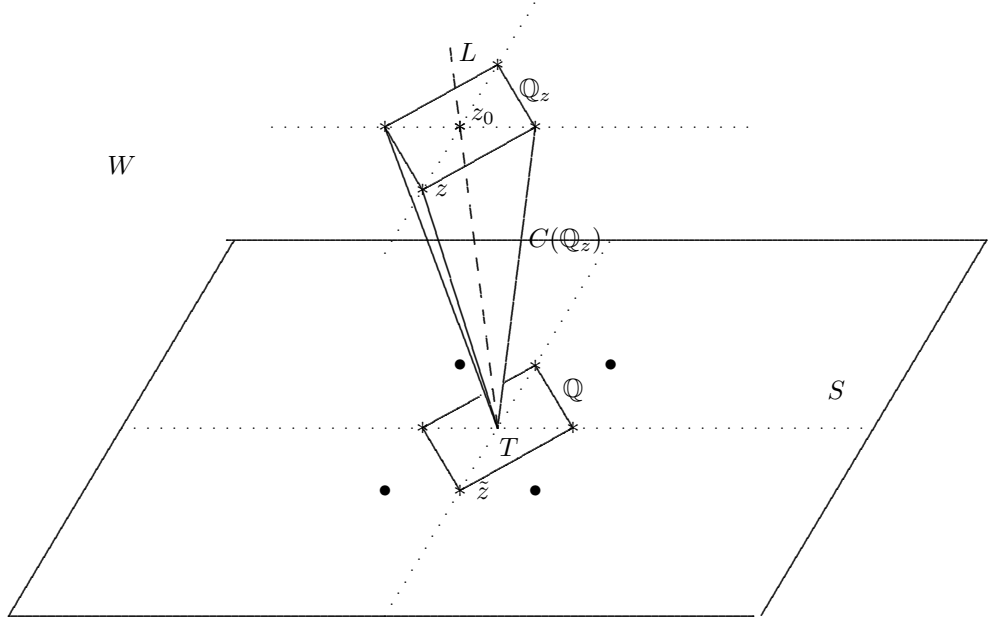
 \hfil
\beginpicture
\dimen0=0.5cm
\setcoordinatesystem units <\dimen0,\dimen0>
\setplotarea x from 0 to 25, y from 0 to 17
\plot 0 0 20 0 26 10 6 10 0 0 /
\put {$S$} at 22 6 
\setdots 
\plot 3 5 23 5 /
\plot 10 0 16 10 /
\setsolid
\plot 13 5 11 11.33 14 13 13 5 10 13 11 11.33 /
\plot 10 13 13 14.66 14 13 /
\setdots
\plot 7 13 20 13 /
\plot 10 9.66 14 16.33 /
\multiput {$*$} at 12 13 10 13 11 11.33 14 13 12 13 13 14.66 /
\put {$z_0$} at 12.6 13.3 
\put {$z$} at 11.5 11.33
\put {$\Q_z$} at 14 14 
\put {$W$} at 3 12 
\put {$L$} at 12.2 15 
\multiput {$*$} at 12 3.33 15 5 14 6.66 11 5 /
\put {$\tilde z$} at 12.6 3.33
\plot 12 3.33 15 5 14 6.66 11 5 12 3.33 /
\setsolid
\plot 12 3.33 15 5 14 6.66 13.2 6.22 /
\plot 12.5 5.82 11 5 12 3.33 /
\put {$\Q$} at 15 6 
\put {$C(\Q_z)$} at 14.8 10 
\multiput {$\bullet$} at 10 3.33 14 3.33 12 6.66 16 6.66 /
\put {$T$} at 13.3 4.5
\setdashes
\plot 13 5 12 13 11.7 15.4 /
\endpicture
\caption{\label{fig:quad}{\small
The targets $P^A_i,P^A_j,P^B_k,P^B_l$ in the plane
$S$ are drawn marked as $\bullet$. The dashed line corresponds to
$Z$. A quadrangle $\Q$ in $S$ is also drawn
and also another quadrangle $\Q_z=\mbox{pr}^{-1}(\Q)\cap S_0$ which is in
the plane $S_0\ni z_0$.
The cone of the quadrangle with apex $T$ is invariant under the flow.
In the figure we have taken the case that $d_i^a$ and $d_i^b$
are equal to $1$.}}
\end{figure}

Before doing a rather cumbersome explicit calculation in the next subsection
(for the game under consideration), 
we first want to explain geometrically what the dynamics near $Z^*$ looks like.
As before let $Z'=Z^B_{k,l}\times Z^A_{i,j}$
be a codimension-two plane where both players
are indifferent between two strategies. In this section we consider
the situation that near  part of this set where both players are indifferent,
both players choose repeatedly 
the strategies in a period four pattern $(i,k), (i,l), (j,l), (j,k)$.
Let $S$ be the two-dimensional
plane spanned by 
$ [P^A_i,P^A_j]\times [P^B_k,P^B_l]\subset \Sigma_A\times \Sigma_B$.
The orbit segments aim to the four targets in $S$, but an orbit 
which starts in $Z'$ remains in $Z'$ and aims for the point $T:=S\cap Z'$. 
We call this point the {\em cone-target}.
The linear (affine) spaces $S$ and $Z'$ are of complementary
dimensions and transverse. Let $L$ be a line through the cone-target 
$T$ contained in $Z'$, and
let $W$ be the three dimensional space $W=S + L$.
Take $z_0\in L$, and assume that on some neighbourhood
$U\subset W$ of $z_0$, the players only choose the above strategies. 
In other words, $U\setminus Z'$ consists of four components, 
$S_{st}$ where $s\in \{i,j\}$ and $t\in \{k,l\}$ such that
players $A$ and $B$ aim for $P_s^A$ resp. $P_t^B$ in $S_{st}$.
First we prove that each orbit in $U$
lies within a cone with apex $T$ (the cone-target) induced over some quadrangles $\Q$, see Figure~\ref{fig:quad}.

Let us define these quadrangles $\Q$.
To do this, it is convenient apply a translation to $S\subset \Sigma_A\times \Sigma_B$ 
which puts  $T=S\cap Z'$ as the origin $(0,0)$ of 
$S\subset \rz^2$ and identifies
$S=[P_{i}^A,P_j^A]\times [P_k^B,P_l^B]$ with a rectangle in $\rz\times \rz$ so that  
$(P^A_i,P^B_k)$, $(P^A_i,P^B_l)$,
$(P^A_j,P^B_l)$ and $(P^A_j,P^B_k)$ correspond to
$(-d^a_1,d^b_1),(-d^a_1,-d^b_2),(d^a_2,-d^b_2),(d^a_2,d^b_1)$ 
for some $d^a_1,d^a_2,d^b_1,d^b_2\ne 0$
with $d^a_1 d^a_2>0$ and $d^b_1 d^b_2>0$.
(The signs of $d^a_1 ,d^a_2,d^b_1, d^b_2$ depend on whether
the orbits flow clockwise or anticlockwise along $L$.)
Now let $\Q$ be the quadrangle with corners
\begin{equation}
(d^a_2,0),(0,\frac{d^a_2}{d^a_1}d^b_1 ),
(- \frac{d^b_1}{d^b_2}d^a_2,0),(0,-d^b_1)
\label{qcc}
\end{equation}
(or a multiple of it).
Note that the four corners of this quadrangle lie on the coordinate axes
and the sides of this quadrangle are
parallel with the vector pointing from $O$ to $(P_s^A,P_t^B)$ 
in the region $\hat S_{st}$ corresponding to the projection of $S_{st}$ along the
direction $L$. The quadrangle $\Q$ is shown in Figure~\ref{fig:quad}. 

Next take a plane $S_0\subset W$ through $z_0$ transversal to $L$
and take the projection $\mbox{pr}\colon S_0\to S$ along $L$. 
Furthermore, take $z\in S_0$, take the multiple $\epsilon \Q$ 
of the quadrangle $\Q$ in $S$
which contains $\mbox{pr}(z)\in S$ and define a quadrangle 
$\Q_z =\mbox{pr}^{-1}(\epsilon \Q)\cap S_0$, see Figure~\ref{fig:quad}.
Each $z\in S_0$ sufficiently close to $z_0$ is contained in some quadrangle $\Q_z$
in this way. 
These quadrangles in $S_0$ have the following two property: (1)
their corners are contained in two lines in $S_0$ which are orthogonal to each other
and (2) the quadrangles  are self-similar (they are all scalings
of each other around the common `centre point' $z_0$).

The reason these quadrangles are important is the following:
Consider the cone $C(\Q_z)$ over $\Q_z$ with as apex the cone-target $T$. 
Since the 'vertical' sides of this cone are contained in planes through the cone-target
$T$ and the targets $(P^A_s,P^B_t)$, for each $z$ in this side the vector
pointing from $z$ to $(P^A_s,P^B_t)$ is contained in the side of the cone.
It follows that orbits which start in $C(\Q_z)$ remain in $C(\Q_z)$ until such time
as one or both of the players start to play a third strategy.

\begin{figure}[htp]
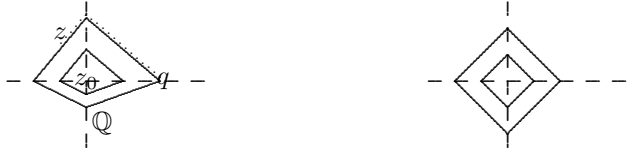
 \hfil
\beginpicture
\dimen0=0.07cm \dimen1=0.07cm
\setcoordinatesystem units <\dimen0,\dimen0> point at 0 0
\setplotarea x from -10 to 10, y from -15 to 15
\setlinear \setdashes
\plot -15 0 25 0 /
\plot 0 15 0 -15 / 
\put {$z_0$} at 0 0 
\setsolid
\put {$q$} at 14.6 0
\put {$z$} at -5 9
\put {$\Q$} at 3 -8  
\plot -10 0  0 12  14 0 0 -5 -10 0 / 
\plot -5 0  0 6  7 0 0 -2.5 -5 0 / 
\setdots <1mm>
\plot 14 0.7 0 12.7 -5 7 /
\setsolid
\setcoordinatesystem units <\dimen0,\dimen0> point at -80 0
\setplotarea x from -10 to 10, y from -10 to 10
\setlinear \setdashes
\plot -15 0 25 0 /
\plot 0 15 0 -15 / 
\setsolid
\plot -10 0 0 10 10 0 0 -10 -10 0 /
\plot -5 0 0 5 5 0 0 -5 -5 0 /
\endpicture
\caption{\label{fig:quadrangle}
{\small Arbitrary quadrangles can be mapped to standard quadrangles
$\{(x,y)\in \rz^2 ; |x|+|y|=r\}$ by piecewise linear maps. An arc $\gamma$ connecting
one of the corners of $\Q$ to some point $z\in \Q$ is also drawn (with dotted points). 
The angle of $z$ is equal to
the $l(\gamma)/l(\Q)$ where $l$ is the usual Euclidean length in $\rz^2$.}}
\end{figure}

So consider such a family $\Q$ of quadrangles. Let us
define a natural metric in $S_0$ associated to these quadrangles
in the following way. The family $\Q$ of quadrangles can be mapped to the standard family of quadrangles 
$\{(x,y)\in \rz^2 ; |x|+|y|=r\}$ 
by a map $L$ which restricted to each quadrant is linear, see Figure~\ref{fig:quadrangle}
and with $L(z_0)=0$.
Thus we can define for each $z\in S_0$,
$$||z||_{S_0}=||Lz||$$
where $|| \cdot ||$ is the sum-norm on $\rz^2$:  if $w=(x,y)$  then  $||w||=|x|+|y|$.
So $||z_0||_{S_0}=0$ and the set 
$\{w\in S_0 ; ||w||_{S_0}=r\}$ is exactly a quadrangle from the above family.
By analogy to the usual polar coordinates, we can associate
an angle to $z\in S_0\setminus \{z_0\}$ in the following way. Pick one of the corners
$q$
of the quadrangle $\Q=\{w\in S_0 ; ||w||_{S_0}=r\}$ where $r=||z||_{S_0}$, let 
$\gamma$ be the curve on this quadrangle which connects $q$ to $z$ 
(anti-clockwise). Then define $\phi(z)=2\pi \dfrac{l(\gamma)}{l(\Q)}$ where $l$ stands for the usual 
Euclidean length in $\rz^2$.
Thus we have defined 
{\em quadrilateral polar coordinates} 
 $(r,\phi)$ of $z\in S_0\setminus \{z_0\}$ as follows:
$$r:=||z||_{S_0} \mbox{ and }\phi=2\pi \dfrac{l(\gamma)}{l(\Q)}$$
where $\Q=\{w\in S_0 ; ||w||_{S_0}=r\}$ and $r=||z||_{S_0}$.
(This is completely analogous to how the usual polar coordinates are defined.)

\begin{prop}[Poincar\'e transition map for two planes parallel to $S$.]
\label{spiralest}
Take two points $z_0,z_0'\in L$
so that along the segment $[z_0,z_0']$ no other strategy
becomes preferential (or indifferent) to the strategies
$i,j$ for $A$ and $k,l$ for $B$.
Let $S_0,S_0'$ be  two-dimensional
planes in $W$ through $z_0$
resp. $z_0'$ which are both transversal to $L$. 
For $z\in S_0$ consider the quadrangle $\Q_z$ in $S_0$ constructed 
above and let $C(\Q_z)$ be the cone of $\Q_z$ over the cone-target $T$.
Let $R(z)$ be the Poincar\'e map from $S_0$ to $S_0'$. 
Then $R(z)$ is well-defined for $z$ close to $z_0$
and is contained in $C(\Q_z)\cap S_0'$.
Moreover, consider polar coordinates
in the plane $S_0'$ with the distance and angle taken from
the point $S_0'\cap L$. 
Then one can take a {\em continuous}  map 
$$\Psi\colon S_0 \setminus \{z_0\} \ni z \mapsto \rz$$
so that for each $z\in S_0\setminus \{z_0\}$
the value of $\Psi(z)$ modulo $2\pi$
is equal to the quadrilateral polar angle of $R(z)\in S_0\setminus \{z_0'\}$ 
(as defined above).
Then
$\Psi(z)$ is equal to 
\begin{equation}
2\pi\cdot  \frac{1-c(z)}{a\cdot c(z) \cdot r} + B(z) +B_0. 
\label{eq:winding}\end{equation}
Here $r:=||z||_{S_0}$,
$c(z)$ is equal to $c_0(1+O(r))$ where 
$c_0=\dist(z_0,T)/\dist(z_0',T) \in (0,1)$ and where $O(r)$
is a function with $O(r)/r$ bounded as $r\to 0$,
\,\, $B(z)\colon S_0\setminus \{z_0\} \to \rz$
is continuous function on $ S_0$ with
$B(z)\to 0$ as $z\to z_0$, and $B_0\in \rz$ and $a>0$ are constants. Here $\dist $ is the usual Euclidean
norm on the line $L$. 
\end{prop}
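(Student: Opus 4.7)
My plan is to install convenient coordinates on $W$, follow the orbit loop-by-loop on the invariant cone $C(\Q_z)$, and then sum the contributions to the winding angle. First I would fix coordinates $(x,y,\ell)$ on $W$ with $T$ at the origin, $L$ equal to the $\ell$-axis and $S=\{\ell=0\}$; then $S_0=\{\ell=\ell_0\}$ and $S_0'=\{\ell=\ell_0'\}$. In each region $S_{st}$ the target is the corner $(P_s^A,P_t^B)\in S$, so the ODE $\dot p=(P_s^A,P_t^B)-p$ decouples into $\dot\ell=-\ell$ and a linear in-plane motion. I would then check that each `vertical' face of $C(\Q_z)$ is a plane through $T$ containing the target corner of the region on the other side of that face, whence orbits in that region cannot exit through it; this re-derives the invariance of $C(\Q_z)$ and, more importantly, gives the conservation law that $r:=\|\cdot\|_{S_0}$ and $|\ell|$ contract by the same multiplicative factor along any orbit.

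Next I would compute the one-loop return. Starting at $(x_0,0,\ell_0)$ with $x_0>0$, integration of the piecewise linear flow through the four stages produces explicit transitions of the form
\begin{equation*}
u_{k+1}\;=\;\frac{d_{k-1}\,u_k}{u_k+d_k},\qquad \frac{\ell_{k+1}}{\ell_k}\;=\;\frac{d_k}{u_k+d_k},
\end{equation*}
where $u_k$ is the $k$-th axis-crossing coordinate and $d_k$ cycles through $d^a_1,d^b_2,d^a_2,d^b_1$. Because $\prod_k(d_{k-1}/d_{k+1})=1$ the loop is the identity at leading order, and Taylor expansion in $p_k:=u_k/d_k$ yields $p_4=p_0(1-Cp_0)+O(p_0^3)$ for an explicit positive constant $C$ depending only on the $d^a_i,d^b_j$; by cone invariance the same factor governs $\ell$.

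Iterating, the recursion $p_{n+1}=p_n(1-Cp_n)+O(p_n^3)$ is the Euler scheme for $dp/dn=-Cp^2$, giving $p_n = p_0/(1+Cp_0 n)+O(p_0^2)$ and, by cone invariance, the same closed form for $\ell_n$. The condition $\ell_n=\ell_0'$ then fixes $n=(1-c_0)/(a\,c_0\,r)$ with $c_0$ the appropriate ratio of $|\ell_0|,|\ell_0'|$ in the convention fixed by $c_0=\dist(z_0,T)/\dist(z_0',T)$, and $a>0$ an explicit multiple of $C$. For a general $z\in S_0$ I would prepend an initial partial loop from $z$ to its first axis crossing and append a final partial loop from the last axis crossing to $S_0'$; both are smooth explicit functions of $z$, and their angular contributions in the quadrilateral polar coordinates on $S_0'$ combine into a bounded continuous term $B(z)+B_0$, while their effect on the total $\ell$-ratio replaces $c_0$ by $c(z)=c_0(1+O(r))$. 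Summing yields $\Psi(z)=2\pi(1-c(z))/(a\,c(z)\,r)+B(z)+B_0$.

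The main obstacle will be the bookkeeping of $B(z)$: continuity across the four coordinate axes of $S_0$ (where the initial or final partial loop either degenerates or switches quadrant), and the limit $B(z)\to 0$ as $z\to z_0$. Continuity will follow because as $z$ approaches an axis the time and in-plane displacement in the vanishing quadrant both drop to zero, so the two partial-loop contributions glue continuously. The vanishing at $z_0$ will follow since as $r\to 0$ the initial and final partial orbits shrink to $z_0$ and $z_0'$ respectively, so their angular extent in the quadrilateral polar coordinates on $S_0'$ is bounded by the arc of a shrinking quadrangle and tends to zero. The constant $B_0$ absorbs the limiting reference phase, and $a,c_0,B_0$ all come out as explicit rational functions of the $d^a_i,d^b_j$ and of $\ell_0,\ell_0'$.
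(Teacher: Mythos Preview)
Your plan follows essentially the same architecture as the paper's proof: decouple the $L$-direction from the in-plane motion, exploit cone invariance, compute the one-loop return, iterate to count windings, and handle partial loops as bounded corrections. The route is correct.

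However, you make life harder than necessary at the iteration step. Your one-step transitions $u_{k+1}=d_{k-1}u_k/(u_k+d_k)$ are M\"obius maps, and since M\"obius maps compose to M\"obius maps, the full four-step loop is \emph{exactly} of the form $u\mapsto u/(1+a'u)$ once the leading coefficients cancel (your observation $\prod_k d_{k-1}/d_{k+1}=1$). The paper exploits this by working directly in the quadrilateral norm $\|\cdot\|_S$: there each single-quadrant transition is exactly $r\mapsto r/(1+a_m r)$, the composition is exactly $f(r)=r/(1+ar)$ with $a=\sum a_m$, and hence $f^n(r)=r/(1+nar)$ with no error terms. Solving $f^n(r)=cr$ gives the winding count $n=(1-c)/(acr)$ directly. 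Your Taylor expansion $p_4=p_0(1-Cp_0)+O(p_0^3)$ and the Euler-scheme argument are an unnecessary approximation of an exact identity; they would work, but you would then owe an error-propagation estimate over $\sim 1/r$ iterations that the exact M\"obius form avoids entirely.

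One caution on your $B(z)\to 0$ argument: the claim ``angular extent bounded by the arc of a shrinking quadrangle tends to zero'' is not sound as written, because the quadrilateral angle is a \emph{ratio} of arc length to perimeter and is scale-invariant --- the partial-loop angular contribution is $O(1)$ regardless of $r$. The paper instead uses the fact that the quadrilateral angle is preserved under central projection (each orbit segment is a ray from a target corner), so the angle change over one full loop is exactly $2\pi$ and depends only on $r$ through the scaling ratio $c$; this is what makes the bounded correction well-behaved. You should replace your shrinking-arc reasoning with this projection-invariance observation.
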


So the angle of $R(z)$ increases very fast as $r=||z||_{S_0}$ tends to zero.





\begin{figure}[htp]
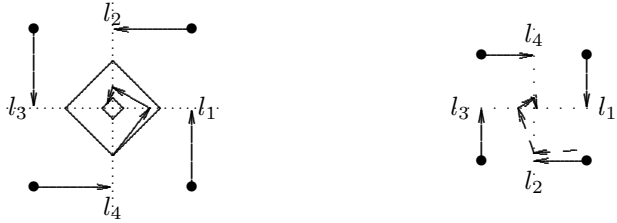
 \hfil
\beginpicture
\dimen0=0.07cm \dimen1=0.07cm
\setcoordinatesystem units <\dimen0,\dimen0> point at 0 0
\setplotarea x from -10 to 10, y from -10 to 10
\setlinear \setdots <1mm>
\plot -20 0 20 0 /
\plot 0 20 0 -20 / 
\put {$l_1$} at 18 0
\put {$l_2$} at 0 18
\put {$l_3$} at -18 0 
\put {$l_4$} at 0 -19 
 \setsolid
\put {$\bullet$} at 15 15
\put {$\bullet$} at -15 15
\put {$\bullet$} at -15 -15
\put {$\bullet$} at 15 -15
\arrow <5pt> [0.2,0.4] from 15 15 to 0.5 15
\arrow <5pt> [0.2,0.4] from -15 15 to -15 0.5
\arrow <5pt> [0.2,0.4] from -15 -15 to -0.5 -15
\arrow <5pt> [0.2,0.4] from 15 -15 to 15 -0.5
\setsolid 
\arrow <5pt> [0.2,0.4] from 0 -9 to 7 0
\arrow <5pt> [0.2,0.4] from 7 0 to 0 4
\arrow <5pt> [0.2,0.4] from 0 4 to -1 1
\setsolid
\plot 2 0  0 2   -2 0  0 -2  2 0 / 
\plot 9 0  0 9   -9 0  0 -9  9 0 / 
\setsolid
\setcoordinatesystem units <\dimen0,\dimen0> point at -80 0
\setplotarea x from -10 to 10, y from -10 to 10
\setlinear \setdots
\plot -15 0 15 0 /
\plot 0 15 0 -15 / \setsolid
\put {$l_1$} at 14 0
\put {$l_2$} at 0 -14
\put {$l_3$} at -14 0 
\put {$l_4$} at 0 14
\put {$\bullet$} at 10 10
\put {$\bullet$} at -10 10
\put {$\bullet$} at -10 -10
\put {$\bullet$} at 10 -10
\arrow <5pt> [0.2,0.4] from 10 10 to 10 0.3
\arrow <5pt> [0.2,0.4] from -10 10 to -0.3 10
\arrow <5pt> [0.2,0.4] from -10 -10 to -10 -0.3
\arrow <5pt> [0.2,0.4] from 10 -10 to 0.3 -10
\setdashes
\arrow <5pt> [0.2,0.4] from 8 -8 to 0 -8.5
\arrow <5pt> [0.2,0.4] from 0 -8.5 to -3 0
\arrow <5pt> [0.2,0.4] from -3 0 to 0 2
\arrow <5pt> [0.2,0.4] from 0 2 to 0.5 0
\setsolid
\endpicture
\caption{\label{fig:2x2}
{\small The spiral motion in $S$ and the half-lines $l_i$ emanating from the centre $O$.
The transition map from some quadrangle $\Q$ to $c_0\Q$ is also drawn.}}
\end{figure}

\noindent
\begin{proof} 
The only thing we need to prove is that (\ref{eq:winding}) holds.
To do this, note that the vector field is the product of
a vector field in a direction along $L$
and one in the direction parallel to $S$. If we ignore the
direction along $L$, then we get a new two-dimensional vector field on $S$
which corresponds to a two-dimensional game with spiral behaviour, see Figure~\ref{fig:2x2}.
In other words, if we define $\mbox{pr}\colon W\to S$ to be the (linear) projection
along $L$ and let $(z,t)\mapsto \Psi_t(z)$ be the flow through
$z$, then $\mbox{pr}$ projects the orbits of this flow in $W$
to orbits of a two-dimensional system in $S$.
Moreover $\Q_{z}=\mbox{pr}(\Q_{\tilde z})$ is a quadrangle in $S$ as defined above,
where $\tilde z:=\mbox{pr}(z) \in S$.
Denote the flow of this two-dimensional system through $\tilde z$
 by $(\tilde z,t) \mapsto \tilde \Phi_t(\tilde z)$.
Consider the piece $[0,A]\ni t \mapsto \Phi_t(z)$ 
of the orbit through $z\in S_0$ until  it hits $S_0'$.
Then $[0,A]\ni t\mapsto \tilde \Phi_t(\tilde z)$ is an arc with
$\tilde z\in \Q_{\tilde z}$ and $\tilde R(\tilde z)\in c(z) \cdot \Q_{\tilde z} $
where $c(z)>0$ is equal to $c_0(1+O(\dist(z,z_0)))$
where 
$$c_0:=\dist(S_0'\cap L,O)/\dist(S_0\cap L,O)$$
and $O(x)$ is a function so that $O(x)/x$ is bounded when $x\to 0$.
This holds because the angle between the 'vertical sides' of $C(\Q_z)$ and  $L$
tends to zero as $z\in S$ tends to $z_0$. (In fact, if $S_0$ and $S_0'$ are both
parallel to $S$ then $c(z)$ is exactly $c_0$.)

So it suffices to consider the projected flow $\tilde \Phi_t(\tilde z)$ on $S$. 
That is, let us consider the Poincar\'e transition 
map $\tilde R$ which assigns to a point $\tilde z\in \Q_{\tilde z}\subset S$ 
the intersection of $c_0\Q_{\tilde z}$ with the orbit $t\mapsto \tilde \Phi_t(\tilde z)$.
Next consider the quadrilateral polar coordinates in $S$ (with the origin centered at $O$)
and let $t\mapsto \phi_t(\tilde z)$ be the angle of $\tilde \Phi_t(\tilde z)$
(where we choose $t\mapsto \phi(\tilde z)$ continuous).
Then the angular change $\phi_A(\tilde z)-\phi_0(\tilde z)$ is equal to
the $2\pi$ times the integer number of times the  $[0,A]\ni t\mapsto \tilde \Phi_t(\tilde z)$
winds around $O$ plus some number in $[0,2\pi]$.
To compute this integer number of winding, consider the four 
half-line $l_i$ through the equilibrium $E_S$ of the two-dimensional game
in $S$ where one of the players 
is indifferent and the other player always prefers one strategy,
and denote these by $l_m$, $m=1,\dots,4$
(numbered so the flow meets these half-lines periodically in this order).
Given $x\in l_m$, let $f_m(x)$ be the first time the flow meets $l_{m+1 \mod{4}}$.
Let us identify these lines with $[0,1]$ where $0$ corresponds
to the equilibrium $E_S$.  Since going from $l_m$ to $l_{m+1}$
is just the stereographic projection from $l_m$ to $l_{m+1}$
through lines of the target in the next region, $f_m(x)$ is of the form
$$r\mapsto \frac{r}{1+ a_m r} \mbox{ with }m=1,2,3,4$$
where $r$ stands for the distance to the origin measured in the metric $||\cdot ||_{S}$
(if, instead, we take the Euclidean metric then we get  these maps are of the form
$r\mapsto \frac{\theta_m r}{1+ a_m r}$ where $\theta_m$ is related to the
shape of the quadrangle). Since the composition of
two Moebius transformations of the form $r\mapsto r/(1+\rho_1 r)$
and $r\mapsto r/(1+\rho_2 r)$ is equal to $x\mapsto r/(1+(\rho_1+\rho_2)r$,
we get  that the Poincar\'e first return map $f$ to $l_1$ is of the form
$$f(r)=f_4\circ f_3\circ f_2\circ f_1 (r)= \frac{r}{1+a r}\mbox{ where }a=a_1+a_2+a_3+a_4.$$
Hence $f^n(r)=\frac{r}{1+nar}$ for all $n$.
So let $n$ be maximal so that $f^n(r)\ge c r$ where $c$ is equal to the number
$c(z)$  from above. Then $n$ is the maximal integer so that
$r/(1+nar)\ge c\cdot r$, i.e.,
$$n\le \frac{1}{ar}\left( \frac{1-c}{c}\right).$$

In fact, $\phi(\tilde R(\tilde z))-\phi(\tilde z)$ is constant for $z\in \Q$,
because the relative length of an arc in $\Q$ (as a proportion of
total perimeter length of $\Q$) is preserved under one central projections, see
Figure~\ref{fig:2x2b} and therefore also under a composition of such maps. 
So in particular if $c_0$ is equal to $1/(1+nar)$ then 
the angle of $\tilde R(\tilde z)$ and $\tilde z$ are the same modulo $2\pi$. 
If $1/(1+(n+1)car) < c_0 <  1/(1+ncar)$ then the result
follows from a simple geometric consideration, see Figure~\ref{fig:2x2b}. 
Thus we have proved Proposition~\ref{spiralest}.
\end{proof}

\begin{figure}[htp]
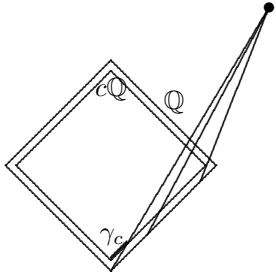
 \hfil
\beginpicture
\dimen0=0.07cm \dimen1=0.07cm
\setcoordinatesystem units <\dimen0,\dimen0> point at 0 0
\setplotarea x from -10 to 10, y from -20 to 20
\setlinear 
\plot -20 0 0 20 20 0 0 -20 -20 0 /
\put {$c\Q$} at 0 14.5 
\put {$\Q$} at 12 12 
\put {$\bullet$} at 30 30
\plot 0 -17.4 3 -14.4 /
\plot 0 -17.6 3 -14.6 /
\put {$\gamma_c$} at 0.2 -13.5
\plot 0 -20 30 30  /
\plot 7 -13 30 30 /
\plot 17 -3 30 30 /
\setsolid
\plot  -18 0 0 18 18 0 0 -18 -18 0 /
\setsolid
\endpicture
\caption{\label{fig:2x2b}
{\small Relative length (i.e. quadrilateral angle) is preserved by central projection. 
Moreover, let $\Q=\{z; ||z||_S=r\}$. Then the length of the arc
$\gamma_c$ between the vertical axis and the intersection of the line with $c\Q$ is equal to
$(1-c)\theta r$ whereas the length of $\Q$ is equal to $\sqrt{2}cr$, where $\theta$ is a constant. 
So the angle of $\gamma_c$ is equal to $(1-c)\theta /(\sqrt{2}c)$ (plus a constant). }}
\end{figure} 

Note that after $n=[1/r]$ iterates of the map
$f$ one has
$f^{n}(r)=\frac{r}{1+nar}\approx c_1 r$
where $c_1\in (0,1)$ is equal to $1/(1+a)$.
During this time, the orbit has spiraled $n$ times
around $O$ with each spiral between $T_r$ and $T_{c_1 r}$.
The length (and so the time-length)
of the two-dimensional orbit is roughly $n \cdot c_2 r\approx c_2$.
Hence in one unit of time, the flow moves
a point $\tilde z$ a definite factor closer to $O$.

\subsection{An analytic computation of the flow near the Jitter set}
\label{subsec:analyticcomp}

\begin{figure}[htp]
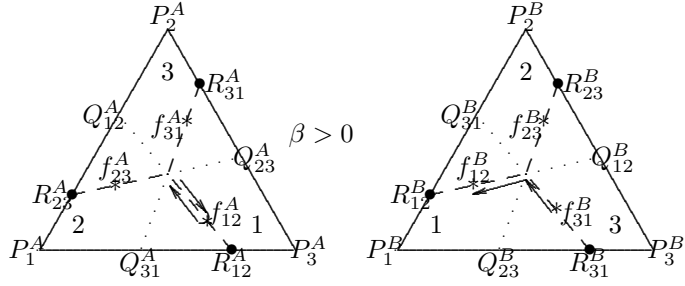
 \hfil
\beginpicture
\dimen0=0.17cm
\setcoordinatesystem units <\dimen0,\dimen0>
\setplotarea x from 0 to 30, y from -2 to 20
\put {$\beta>0$} at 22 9
\setlinear
\setsolid
\plot 0 0 20 0 /
\plot 0 0 10 17.3 20 0 /
\setdashes
\plot 15 0 10 5.98 /
\plot 2.5 4.33 10 5.98 /
\plot 12.5 13 10 5.98 /
\setdots
\plot 10 5.98 6.14 10.60 /
\plot 10 5.98 15.80 7.26 /
\plot 10 5.98 7.88 0 /
\setsolid
\multiput {*} at 11.5 9.8 /  
\multiput {*} at 13.1 2.0 5.9 4.8 /
\multiput {$\bullet$} at 15 0 2.5 4.33 12.5 13 /
\put {$f_{12}^A$} at 14.5 3
\put {$f_{31}^A$} at 10 9.8
\put {$f_{23}^A$} at 5.9 6.5
\put {$R_{12}^A$} at 15 -1
\put {$R_{31}^A$} at 14.5 13
\put {$R_{23}^A$} at 1 4.33
\put {$Q^A_{12}$} at 5 10.60
\put {$Q^A_{23}$} at 16.80 7.26
\put {$Q^A_{31}$} at 7.88 -1
\put {$P^A_1$} at -1 0
\put {$P^A_3$} at 21 0
\put {$P^A_2$} at 10 18.3
\put {$2$} at  3 2
\put {$3$} at 10 14
\put {$1$} at 17 2
\arrow <5pt>  [0.2,0.4] from 11 5.5 to 13 3
\arrow <5pt>  [0.2,0.4] from 12.3 2.3 to 10.2 5.0
\circulararc -180 degrees from 13 3 center at 12.7  2.6
\setcoordinatesystem units <\dimen0,\dimen0> point at -28 0
\setplotarea x from 0 to 30, y from -5 to 20
\setsolid
\plot 0 0 20 0 /
\plot 0 0 10 17.3 20 0 /
\setdashes
\plot 15 0 10 5.98 /
\plot 2.5 4.33 10 5.98 /
\plot 12.5 13 10 5.98 /
\setdots
\plot 10 5.98 6.14 10.60 /
\plot 10 5.98 15.80 7.26 /
\plot 10 5.98 7.88 0 /
\setsolid
\multiput {*} at 11.4 9.8 12.4 3 5.9 4.9 /
\multiput {$\bullet$} at 15 0 2.5 4.33 12.5 13 /
\put {$f_{31}^B$} at 14.0 3
\put {$f_{23}^B$} at 10 9.8
\put {$f_{12}^B$} at 5.9 6.5
\put {$R_{31}^B$} at 15 -1
\put {$R_{23}^B$} at 14.5 13
\put {$R_{12}^B$} at 1 4.33
\put {$Q^B_{31}$} at 5 10.60
\put {$Q^B_{12}$} at 16.80 7.26 
\put {$Q^B_{23}$} at 7.88 -1 
\put {$P^B_1$} at -1 0
\put {$P^B_3$} at 21 0
\put {$P^B_2$} at 10 18.3
\put {$1$} at  3 2
\put {$2$} at 10 14
\put {$3$} at 17 2
\arrow <5pt>  [0.2,0.4] from 12.0 3 to  10 5.5
\arrow <5pt>  [0.2,0.4] from 10 5.5 to  5.9 4.4
\endpicture
\caption{\label{fig:ssb-bothern}{\small 
The simplices $\Sigma_A$ and $\Sigma_B$.   The closed curve $\Gamma$
travels in both triangles clockwise along the three legs. $\Gamma$ reverses at the points $f^A_{ij}$ and
$f^B_{ij}$.}}
\end{figure}

In this section we will make some precise calculations for
the periodic orbit $\tilde \Gamma$ on the Jitter set for the induced flow
on $\partial \Sigma$.
More precisely, we will consider the first return map to 
some first return section at some point in $\Gamma$.
To do this, consider the line $V_0$ in $\Sigma_A$ where 
player $B$ is indifferent between strategies $2$ and $3$
(it goes from $R^A_{23}$ to $Q^A_{23}$, see Figure~\ref{fig:ssb-bothern}
for the location of these points).
Similarly, let $V_1$ in $\Sigma_B$ be where player $A$ 
 is indifferent between strategies $2$ and $3$
(it goes from $R^B_{23}$ to $Q^B_{23}$)
and let $V_2$ be the line in $\Sigma_A$ where 
player $B$ is indifferent between strategies $1$ and $3$
(it goes from $R^A_{31}$ to $Q^A_{31}$).
Moreover, let $\partial \Sigma_A^{ij}$ be the side of $\partial \Sigma_A$
containing $R^A_{ij}$ and similarly define $\partial \Sigma_B^{ij}$ as the side
of $\partial \Sigma_B$ containing $R^B_{ij}$ .

Let $R_0$ be the first entry map $V_0\times \partial \Sigma_B^{31} 
\to \partial \Sigma_A^{12} \times V_1$
and $R_1$ be the first entry map $\partial \Sigma_A^{12}
\times V_1\to V_2\times \partial \Sigma_B^{12}$
corresponding to the induced flows on $\partial \Sigma$.
By the symmetry of the system,
the first return map  to $V_0\times \partial \Sigma_B^{31}$  is equal to the 
third iterate of $R_1\circ R_0$. (Provided we make sure we choose the axis
consistently.)

The first leg of the orbit $\Gamma$ (the one
which is contained in $Z_{1,2}^B\times Z_{1,3}^B\subset \Sigma_A\times \Sigma_B$
and which  is the first piece of the two legs
of the orbit $\Gamma$ shown in Figure~\ref{fig:ssb-bothern}) corresponds
to the first entry map $R_0$.
During the transition which corresponds to $R_0$, 
player $A$ only chooses between strategy $1$ and $3$ and player $B$ only chooses between strategy $2$ and $1$.  Note that as soon as the orbit hits $\Sigma_A\times 
V_1$ and until it hits $V_2\times \Sigma_B$,
player $A$ will only choose between strategies $1$ and $2$ (while player $B$
still only plays $1$ and $2$).
So the first entry maps to these sections allow us to consider
the pieces of the orbit where each players only switch between two strategies.

Let us describe how much further or closer an orbits near $\tilde \Gamma$ can get 
to $\tilde \Gamma$ while it orbits nearby.
To do this,  define the sum-distance (i.e. the metric $\dist$) on 
$\partial \Sigma\subset  \Sigma \subset \rz^6$
between two points $z=(z_1,\dots,z_6)$,
$w=(z_1,\dots,w_6)$ by 
$$\dist(z,w)=\sum_{i=1,\dots,6}\,\,  |z_i-w_i|.$$
This metric is well-suited to dealing with quadrilaterals.
Because of the discussion in the previous subsection,
there exist quadrilaterals in  $V_0\times \partial \Sigma_B^{13}$
which are mapped by $R_1$ into another quadrilaterals in 
$\partial \Sigma_A^{12} \times V_1$ (and similarly for $R_2$). 
Let us compute these quadrilaterals (up to first order).
It will be important to be consistent in the choice 
so let us write
$$V_0\times \partial \Sigma_B^{13}=[R^A_{23},Q_{23}^A]\times [P_1^B,P_3^B],$$
$$\partial \Sigma_A^{12} \times V_1=[P_1^A,P_3^A]\times [Q^B_{23},R^B_{23}],$$
$$V_2\times \partial \Sigma_B^{21}=[R^A_{31},Q^A_{31}]\times [P_2^B,P_1^B]$$
and number the half-lines clockwise starting with the positive horizontal axis,
see Figure~\ref{fig:axis}.

\begin{figure}[htp]
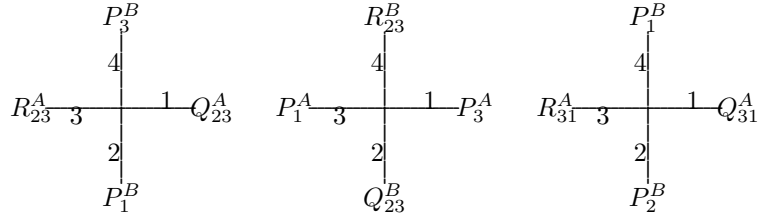
 \hfil
\beginpicture
\dimen0=0.1cm
\setcoordinatesystem units <\dimen0,\dimen0>
\setplotarea x from 0 to 10, y from -10 to 10
\plot -10 0 10 0 /
\plot -0 -10 0 10 /
\put {$R^A_{23}$} at -12 0
\put {$Q^A_{23}$} at 12 0
\put {$P_1^B$} at 0 -12
\put {$P_3^B$} at 0 12
\put {$1$} at 6 1
\put {$4$} at -1 6
\put {$3$} at -6 -1 
\put {$2$} at -1 -6 
\setcoordinatesystem units <\dimen0,\dimen0> point at -35 0
\setplotarea x from 0 to 10, y from -10 to 10  
\plot -10 0 10 0 /
\plot -0 -10 0 10 /
\put {$P_1^A$} at -12 0
\put {$P_3^A$} at 12 0
\put {$Q^B_{23}$} at 0 -12
\put {$R^B_{23}$} at 0 12
\put {$1$} at 6 1
\put {$4$} at -1 6
\put {$3$} at -6 -1 
\put {$2$} at -1 -6 
\setcoordinatesystem units <\dimen0,\dimen0> point at -70 0 
\setplotarea x from 0 to 10, y from -10 to 10
\plot -10 0 10 0 /
\plot -0 -10 0 10 /
\put {$1$} at 6 1
\put {$4$} at -1 6
\put {$3$} at -6 -1 
\put {$2$} at -1 -6 
\put {$R^A_{31}$} at -12 0
\put {$Q^A_{31}$} at 12 0
\put {$P_2^B$} at 0 -12
\put {$P_1^B$} at 0 12
\endpicture
\caption{\label{fig:axis}{\small The first return map to 
$[R^A_{23},Q_{23}^A]\times [P_1^B,P_3^B]$ is equal to the third iterate of
$R_1\circ R_0$ provided we identify $[R^A_{31},Q^A_{31}]\times [P_2^B,P_1^B]$ with $[R^A_{23},Q_{23}^A]\times [P_1^B,P_3^B]$  in a consistent way. For this reason we order  
the half-lines in the coordinate axis in the return sections as shown.
The origins of these axes are $(E^A,R^B_{31})$, $(R^A_{12},E^B)$ and $(E^A,R^B_{12})$.}}
\end{figure}


\begin{prop}\label{prop:quadr}
For each $\epsilon>0$ small, there exists a quadrilateral  $\Q_{V_0,R_0}^\epsilon
\subset V_0\times \partial \Sigma_B^{13}$ with corners in the
coordinate axes, and such that the sum-distance of 
corners (1),(2),(3),(4) (labeled as in Figure~\ref{fig:axis} on the left)
to $(E^A,R^B_{13})$ are equal,  up to terms of order $\epsilon^2$, to 
\begin{equation}
\frac{2}{3}\frac{(2+\beta)\epsilon}{1+\beta+\beta^2}, \,\, 
 \frac{2\epsilon}{(2-\beta)(1+\beta)}, \,\, 
\frac{2}{3}\frac{(2+\beta)\epsilon}{\beta(1+\beta+\beta^2)}, \,\, 
\frac{2\epsilon}{2-\beta} .
\label{eq:quadrV0R0}
\end{equation}
Similarly there exists a quadrilateral 
$\Q_{V_1,R_0}^\epsilon \subset  \partial \Sigma_A^{12} \times V_1$
with corners in the
coordinate axes, and such that the sum-distance of these
corners (1),(2),(3),(4) (labeled as in Figure~\ref{fig:axis} in the middle)
to $(R^A_{12},E^B)$ are equal, up to terms of order $\epsilon^2$, to
\begin{equation}\begin{array}{rl}
& 
\dfrac{2(2-\beta)\epsilon}{(1+\beta)(2+\beta)}, \\ 
& \\
&\dfrac{2}{3}\dfrac{(4-4\beta+\beta^2)\epsilon}{1+\beta^3+\beta+\beta^4}
\mbox{ for }\beta\in (0,1/2) \mbox{ and }\dfrac{2}{3}\dfrac{(2-\beta)\epsilon}{1+\beta^3} \mbox{ for }\beta\in (1/2,1),\\
& \\
&\dfrac{2(2-\beta)\epsilon}{\beta(1+\beta)(2+\beta)}, \\
&\\
&
\dfrac{2}{3}\dfrac{(4-4\beta+\beta^2)\epsilon}{(1+\beta^3)} \mbox{ for }\beta\in (0,1/2)
\mbox{ and } \dfrac{2}{3}\dfrac{(2-\beta)\epsilon}{1-\beta+\beta^2} \mbox{ for }\beta\in (1/2,1).
\end{array}\label{eq:quadrV1R0}
\end{equation}
The first entry map $R_0$ maps 
$Q_{V_0,R_0}^\epsilon$ into $Q_{V_1,R_0}^\epsilon$.
\end{prop}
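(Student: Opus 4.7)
The plan is to apply the cone-quadrilateral construction of Subsection~\ref{ss:newbit} (formula~(\ref{qcc}) and Proposition~\ref{spiralest}) to the leg of $\tilde\Gamma$ corresponding to the first-entry map $R_0$. Along that leg, player~$A$ plays only strategies $1$ and $3$ and player~$B$ plays only strategies $1$ and $2$, so the relevant target plane of the framework is $S=[P_1^A,P_3^A]\times[P_1^B,P_2^B]$ and the relevant codimension-two indifference plane is $Z'=Z_{1,2}^B\times Z_{1,3}^A$. The line $L$ in $Z'$ through the cone-target is, by construction, the tangent direction to $\tilde\Gamma$.

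First I would locate the cone-target $T=S\cap Z'$ explicitly. The condition $v^B_1(p^A)=v^B_2(p^A)$, i.e.\ $(1+\beta)p^A_1-\beta p^A_2-p^A_3=0$, together with $p^A_2=0$, forces $p^A_T=(1,0,1+\beta)/(2+\beta)$. Similarly, $v^A_1(p^B)=v^A_3(p^B)$ with $p^B_3=0$ gives $p^B_T=(\beta,1,0)^{\mathrm T}/(1+\beta)$. Measuring in the sum-norm to the four corners of $S$, one reads off
\[
d^a_1=\tfrac{2(1+\beta)}{2+\beta},\quad d^a_2=\tfrac{2}{2+\beta},\quad d^b_1=\tfrac{2}{1+\beta},\quad d^b_2=\tfrac{2\beta}{1+\beta}.
\]
Substituting into~(\ref{qcc}) then defines the invariant quadrangle $\Q\subset S$ and hence the invariant cone $C(\Q)$ with apex $T$.

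Next I would compute the intersection of $C(\epsilon\Q)$ with the section $V_0\times\partial\Sigma_B^{31}$, whose origin $(E^A,R^B_{31})$ lies in $Z'\cap L$. This amounts to following the four extremal rays of $C(\epsilon\Q)$ from $T$ until they meet the section; because the section is transversal to, but not parallel to, $S$, the corner distances pick up a rescaling that depends on the relative positions of $T$ and $(E^A,R^B_{31})$ on $L$ and on the slopes with which $V_0$ and $\partial\Sigma_B^{31}$ sit inside their simplices. A direct (tedious but elementary) linear computation shows that to leading order in $\epsilon$ these four sum-norm distances agree with~(\ref{eq:quadrV0R0}); the factor $(2-\beta)$ appearing there reflects this projection, while $(2+\beta)$ and $1+\beta+\beta^2$ come from the cone geometry inside $S$. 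The same outline applied at the exit section $\partial\Sigma_A^{12}\times V_1$ (origin $(R^A_{12},E^B)$) produces a candidate for~(\ref{eq:quadrV1R0}).

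The hard part is the piecewise split at $\beta=1/2$ in~(\ref{eq:quadrV1R0}). Formula~(\ref{qcc}) yields a quadrangle with four corners, but there is no a priori guarantee that the two corners on the $V_1$-axis lie in the region where player~$A$ restricts to $\{1,3\}$ and player~$B$ to $\{1,2\}$: before such a corner is reached a third strategy may become preferable for one of the players, and the invariant cone is then truncated along a different linear indifference boundary. Comparing $v^A_2$ with $v^A_1=v^A_3$ (for player~$A$'s third-strategy constraint) and $v^B_3$ with $v^B_1=v^B_2$ (for player~$B$'s) along the vertical axis shows that precisely at $\beta=1/2$ the binding constraint switches: for $\beta<1/2$ the truncation produces the denominator $1+\beta^3+\beta+\beta^4$, while for $\beta>1/2$ it yields $1+\beta^3$ and $1-\beta+\beta^2$. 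Continuity of the two expressions at $\beta=1/2$ provides a useful sanity check on the algebra. Finally, the statement that $R_0$ maps $\Q^\epsilon_{V_0,R_0}$ into $\Q^\epsilon_{V_1,R_0}$ is immediate from Proposition~\ref{spiralest}: both quadrilaterals are cross-sections of the same invariant cone $C(\Q)$ with apex $T$, and the flow preserves this cone as long as neither player switches to a third strategy, which by construction is the case inside the (truncated) cone.
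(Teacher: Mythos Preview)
Your overall strategy---locate the cone-target $T$, use the abstract quadrangle formula~(\ref{qcc}) in $S$, then intersect the cone $C(\epsilon\Q)$ with the two sections---is sound and is essentially a more conceptual version of what the paper does. The paper instead picks a concrete starting point $(E^A,p^B)$ with $p^B$ at sum-distance $O(\epsilon)$ from $R^B_{31}$, follows the orbit through four consecutive indifference crossings $\hat t_1,\dots,\hat t_4$ (by explicit Maple computation), takes the cone through these four orbit points with apex $T$, intersects with $V_0\times\Sigma_B$ and $\Sigma_A\times V_1$, and finally projects from $E$ onto $\partial\Sigma$ to pass to the induced flow. The four orbit points are of course just the corners of your $\epsilon\Q$, so the two routes coincide; the paper's is more hands-on, yours exploits the structure of Subsection~\ref{ss:newbit} more directly. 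One step you underplay is the radial projection from $E$ to $\partial\Sigma$: the sections $V_0\times\partial\Sigma_B^{31}$ and $\partial\Sigma_A^{12}\times V_1$ live on the boundary, and it is this projection (not merely the transversality of the section to $L$) that introduces several of the extra factors in~(\ref{eq:quadrV0R0}) and~(\ref{eq:quadrV1R0}).

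Your explanation of the split at $\beta=1/2$, however, is wrong. No third strategy becomes preferable and no truncation of the cone occurs; the four corner points of $\Q^\epsilon_{V_1,R_0}$ are smooth (in fact rational) functions of $\beta$ and $\epsilon$ throughout $(0,1)$. The split appears only in the very last step, when one computes the \emph{sum-distance} of each corner to $(R^A_{12},E^B)$: this is a sum of absolute values of the coordinate differences, and for corners $(2)$ and $(4)$ one of those differences changes sign exactly at $\beta=1/2$. For instance, at corner~$(4)$ the derivative at $\epsilon=0$ of the third $p^B$-coordinate is $\tfrac{1}{3}(2\beta-1)(\beta-2)/(1+\beta^3)$, which is positive for $\beta<1/2$ and negative for $\beta>1/2$; summing the absolute values of the three coordinate derivatives then gives the two expressions in~(\ref{eq:quadrV1R0}). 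So the piecewise formulas are an artefact of the sum-norm, not of the dynamics, and your proposed mechanism (comparing $v^A_2$ and $v^B_3$ along the axis) is not what is happening.
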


\begin{figure}[htp]
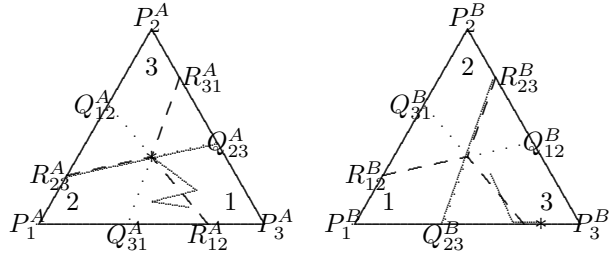
 \hfil
\beginpicture
\dimen0=0.15cm
\setcoordinatesystem units <\dimen0,\dimen0>
\setplotarea x from 0 to 30, y from -5 to 20
\setlinear
\setsolid
\plot 0 0 20 0 /
\plot 0 0 10 17.3 20 0 /
\setdashes
\plot 15 0 10 5.98 /
\plot 2.5 4.33 10 5.98 /
\setdots <0.3mm>
\plot 2.5 4.2 15.80 7.13 /
\setdashes
\plot 12.5 13 10 5.98 /
\setdots
\plot 10 5.98 6.14 10.60 /
\plot 10 5.98 15.80 7.26 /
\plot 10 5.98 7.88 0 /
\put {$*$} at 10. 5.98 
\setdots <0.3mm> 
\plot 10 5.98 14 3 10 2 13.3 1.5 / 
\setsolid
\put {$R_{12}^A$} at 15 -1
\put {$R_{31}^A$} at 14.5 13
\put {$R_{23}^A$} at 1 4.33
\put {$Q^A_{12}$} at 5 10.60
\put {$Q^A_{23}$} at 16.80 7.26
\put {$Q^A_{31}$} at 7.88 -1
\put {$P^A_1$} at -1 0
\put {$P^A_3$} at 21 0
\put {$P^A_2$} at 10 18.3
\put {$2$} at  3 2
\put {$3$} at 10 14
\put {$1$} at 17 2
\setcoordinatesystem units <\dimen0,\dimen0> point at -28 0
\setplotarea x from 0 to 30, y from -5 to 20
\setsolid
\plot 0 0 20 0 /
\plot 0 0 10 17.3 20 0 /
\setdashes
\plot 15 0 10 5.98 /
\plot 2.5 4.33 10 5.98 /
\plot 12.5 13 10 5.98 /
\setdots <0.3mm>
\plot 7.68 0 12.3 13 /
\setdots
\plot 10 5.98 6.14 10.60 /
\plot 10 5.98 15.80 7.26 /
\plot 10 5.98 7.88 0 /
\setsolid
\put {$R_{23}^B$} at 14.5 13
\put {$R_{12}^B$} at 1 4.33
\put {$Q^B_{31}$} at 5 10.60
\put {$Q^B_{12}$} at 16.80 7.26 
\put {$Q^B_{23}$} at 7.88 -1 
\put {$P^B_1$} at -1 0
\put {$P^B_3$} at 21 0
\put {$P^B_2$} at 10 18.3
\put {$1$} at  3 2
\put {$2$} at 10 14
\put {$3$} at 17 2
\put {$*$} at 16.5 0 
\setdots <0.3mm>
\plot 16.5 0.2 14 0.2 12 4.6 / 
\endpicture
\caption{\label{fig:ssb-botherna}{\small 
The simplices $\Sigma_A$ and $\Sigma_B$.   The line $V_0\subset \Sigma^A$ is drawn in the left triangle
and the line $V_1\subset \Sigma^B$ on the right, and the first 
four line segments of the orbit (starting at time $t_0=0$ at the point $*$) which are
computed in the proof of Proposition~\ref{prop:quadr} are shown.}}
\end{figure}

So this proposition gives precise information on how much further
or closer one gets to $\tilde \Gamma$
during the transition from $V_0\times \partial \Sigma_B^{13}$
to $ \partial \Sigma_A^{12} \times V_1$. Remember that
we saw in the previous subsection that
the angle of $R_0(z)$ depends extremely sensitively on $\epsilon$
and so it essentially suffices to compare the size of the terms
in (\ref{eq:quadrV0R0}) to those in  (\ref{eq:quadrV0R0}).


What we need to do in the proof of Proposition~\ref{prop:quadr} is
to associate to $R_0$ invariant cones as in the previous subsection,
and the corresponding quadrangles in $V_0\times \partial \Sigma_B^{13}$
 and in $ \partial \Sigma_A^{12} \times V_1$.
After that, we will do the same for the first entry map $R_1$.

\begin{proof}
Since we want to consider the induced flow on $\partial \Sigma$,
we take a starting point 
$(p^A,p^B)\in V_0\times \partial \Sigma_B^{12}$ when considering the map $R_0$.
During this part of the orbit, the orbit jitters around this first leg of $\Gamma$.
To describe this precisely, we explicitly compute the quadrilateral from the previous subsection.
As we have seen in the previous subsection the orbit
is contained in a cone with apex the 'cone-target'
which in this case is equal to $(T_A,T_B)=(R^A_{12},Q_{13}^B)$.
To compute this cone, let us take as a special starting point in
$V_0\times \partial \Sigma_B^{13}$
the point 
$p_A=(1/3,1/3,1/3), p_B=(1-\beta-\epsilon, 0, 1+\epsilon)/(2-\beta)$
(where $p_B=R^B_{13}$ when $\epsilon=0$) and
 compute the first four pieces where this orbit
aims for $(P_3^A,P_1^B)$, $(P_1^A,P_1^B)$, $(P_1^A,P_2^B)$ and $(P^A_3,P^B_2)$
under the original flow (i.e. the first four times when the players
hit an indifference plane under the original flow). Since the calculations are rather laborious
and it is easy to make a mistake, we did this by using Maple (the worksheet 
can be requested from the authors - and also is available on the first author's webpage).
For simplicity we take the parametrisation 
$p^A(t_k+s)=p^A(t_k)(1-s)+sP^A_i$ and $p^A(t_k+s)=p^A(t_k)(1-s)+sP^A_j$ 
for all $t\in (t_k,t_{k+1})$ provided $A$ resp. $B$ aim for $P^A_i,P^B_j$
during this time interval.
The first hitting time is at $\hat t_1:=t_1 := \epsilon/(1+\epsilon)$, 
and then
$$p_A(\hat t_1)= \left( \dfrac{1}{3(1+\epsilon)},\dfrac{1}{3(1+\epsilon)}, \dfrac{1+3\epsilon}{3(1+\epsilon)}\right) \mbox{ ,  }\quad 
p_B(\hat t_1)=\left( \frac {1-\beta}{2-\beta},  0, \frac{1}{2-\beta} \right).$$
The next time the players hit an indifference plane is at $\hat t_2:=t_1+t_2$
where $t_2:= \epsilon/(\beta+\beta\epsilon+1+2\epsilon)$
and then 
$$p_A(\hat t_2):=\left(  
\dfrac{1}{3}\dfrac{\epsilon+\beta+1}{\beta+\beta\,\epsilon+1+2\,\epsilon} \,\,
, \,\,
\dfrac{1}{3} \dfrac {\beta+1}{\beta+\beta\,\epsilon+1+2\,\epsilon}\, \, , \, \, \dfrac{1}{3}
\dfrac { \left( 1+3\,\epsilon \right)  \left( \beta+1 \right) }{\beta+\beta\,\epsilon+1+2\,\epsilon}\right),
$$
$$p_B(\hat t_2):=\left( \frac {{\beta}^{2}+{\beta}^{2}\epsilon-1-3\,\epsilon+\beta\,\epsilon}{ \left( \beta+\beta\,\epsilon+1+2\,\epsilon \right)  \left( -2+\beta \right) }
\,\, , \,\, 0 \,\, , \,\, -\frac {\beta+\beta\,\epsilon+1+\epsilon}{ \left( \beta+\beta\,\epsilon+1+2\,\epsilon \right)  \left( -2+\beta \right) }\right)$$
Then it hits at $\hat t_3:=t_1+t_2+t_3$ with $t_3:=\epsilon/({\beta}^{2}+{\beta}^{2}\epsilon+\beta+2\,\beta\,\epsilon+\epsilon)$
and then $p_A(\hat t_3)$ and $p_B(\hat t_3)$ are equal to 
$$\left(\dfrac{1}{3}\frac {\beta+3\,\epsilon}{\beta+\beta\,\epsilon+\epsilon}
\,\, , \,\, \frac{1}{3}\dfrac {\beta}{\beta+\beta\,\epsilon+\epsilon} \,\, , \,\, \dfrac{1}{3} 
\frac {\beta\, \left( 1+3\,\epsilon \right) }{\beta+\beta\,\epsilon+\epsilon}\right) \,\, , 
$$
$$\left(\frac { \left( {\beta}^{2}+{\beta}^{2}\epsilon-1-3\,\epsilon+\beta\,\epsilon \right) \beta}
{ \left( -2+\beta \right)  \left( {\beta}^{2}+{\beta}^{2}\epsilon+\beta+2\,\beta\,\epsilon+\epsilon \right) }
\,\, , \,\, 
\frac {\epsilon}{{\beta}^{2}+{\beta}^{2}\epsilon+\beta+2\,\beta\,\epsilon+\epsilon}
\,\, , \,\, 
-\frac { \left( 1+\epsilon \right) \beta}{ \left( -2+\beta \right)  \left( \beta+\beta\,\epsilon+\epsilon \right) } \right)
$$
and again at $\hat t_4:=\hat t_3+t_4$ where $t_4:=\epsilon/(\beta+\beta\epsilon+2\epsilon)$ and then
$p_A(\hat t_4)$ and $p_B(\hat t_4)$ are 
$$\left(\frac{1}{3} \frac {\beta+3\,\epsilon}{\beta+\beta\,\epsilon+2\,\epsilon}
\,\, , \,\, 
\frac{1}{3} \frac {\beta}{\beta+\beta\,\epsilon+2\,\epsilon}
\,\, , \,\, 
\frac{1}{3} \frac {\beta+3\,\beta\,\epsilon+3\,\epsilon}{\beta+\beta\,\epsilon+2\,\epsilon} \right),$$
$$
\left( \frac {\beta\, \left( {\beta}^{2}+{\beta}^{2}\epsilon-1-3\,\epsilon+\beta\,\epsilon \right) }{ \left( \beta+1 \right)  \left( -2+\beta \right)  \left( \beta+\beta\,\epsilon+2\,\epsilon \right) }
\,\, , \,\, 
\frac { \left( \beta+2 \right) \epsilon}{ \left( \beta+1 \right)  \left( \beta+\beta\,\epsilon+2\,\epsilon \right) } 
\,\, , \,\,  -\frac {\beta\, \left( 1+\epsilon \right) }{ \left( -2+\beta \right)  \left( \beta+\beta\,\epsilon+2\,\epsilon \right) }\right).$$
Next we compute the cone. As mentioned, the cone-targets are 
$$T_A:=\left( \frac{1}{\beta+2}\,\, , \,\, 0 \,\, , \,\, \frac {\beta+1}{\beta+2} \right)\mbox{ and }T_B:=\left(  \frac {\beta}{\beta+1}\,\, , \,\,  \frac{1}{\beta+1}\,\, , \,\, 0 \right)$$
and we compute the intersection of the line through $(T_A,T_B)$ and
the points $(p_A(\hat t_i),p_B(\hat t_i))$, $i=1,\dots,4$ 
with the three-dimensional section $V_0\times \Sigma_B$.
This gives an intersection point  at $\hat t_1$,
$$\left( 1/3\frac {-3\,\beta\,\epsilon+{\beta}^{2}+\beta+1}{{\beta}^{2}+\beta-\beta\,\epsilon+1+\epsilon}
\,\, , \,\, 
1/3
\frac {{\beta}^{2}+\beta+1}{{\beta}^{2}+\beta-\beta\,\epsilon+1+\epsilon}
\,\, , \,\, 
1/3
\frac {{\beta}^{2}+\beta+1+3\,\epsilon}{{\beta}^{2}+\beta-\beta\,\epsilon+1+\epsilon}\right),$$
$$
\left(\frac{ {4\beta}^{2}\epsilon+{\beta}^{4}+{\beta}^{3}+{\beta}^{3}\epsilon-\beta-1-\beta\,\epsilon-\epsilon}
{ \left( {\beta}^{2}+\beta-\beta\,\epsilon+1+\epsilon \right)  \left( \beta+1 \right)  \left( -2+\beta \right) }
\,\, , \,\, 
-\frac {\beta\,\epsilon\, \left( \beta+2 \right) }{ \left( {\beta}^{2}+\beta-\beta\,\epsilon+1+\epsilon \right)  \left( \beta+1 \right) }
\,\, , \,\, \right. $$
$$\left. -\frac { \left( {\beta}^{2}+\beta+1 \right)  \left( 1+\epsilon \right) }{ \left( {\beta}^{2}+\beta-\beta\,\epsilon+1+\epsilon \right)  \left( -2+\beta \right) } \right);
$$
at $\hat t_2$ the intersection point is
$$(1/3,1/3,1/3),$$
$$\left(
\frac {2\,\beta\,\epsilon+{\beta}^{3}+{\beta}^{2}+2\,{\beta}^{2}\epsilon-\beta-1-3\,\epsilon}{ \left( \beta+1 \right) ^{2} \left( -2+\beta \right) }
\,\, , \,\, 
-{\frac {\epsilon\, \left( \beta+2 \right) }{ \left( \beta+1 \right) ^{2}}}
\,\, , \,\, 
-\frac {1+\epsilon}{-2+\beta}\right); $$
at $\hat t_3$ the intersection point is
$$ \left(1/3\frac { \left( {\beta}^{2}+\beta+1+3\,\epsilon \right) \beta}{{\beta}^{2}+\beta+{\beta}^{3}+\beta\,\epsilon-\epsilon}
\,\, , \,\, 
1/3\frac { \left( {\beta}^{2}+\beta+1 \right) \beta}{{\beta}^{2}+\beta+{\beta}^{3}+\beta\,\epsilon-\epsilon}
\,\, , \,\, 
1/3
\frac {{\beta}^{3}+{\beta}^{2}+\beta-3\,\epsilon}{{\beta}^{2}+\beta+{\beta}^{3}+\beta\,\epsilon-\epsilon}\right)
$$
$$
\left( \frac {\beta\, \left( {\beta}^{4}+2\,{\beta}^{3}\epsilon+{\beta}^{3}+2\,{\beta}^{2}\epsilon-2\,\beta\,\epsilon-\beta+\epsilon-1 \right) }{ \left( \beta+1 \right)  \left( -2+\beta \right)  \left( {\beta}^{2}+\beta+{\beta}^{3}+\beta\,\epsilon-\epsilon \right) }
\,\, , \,\, 
-\frac {\epsilon\, \left( {\beta}^{3}+{\beta}^{2}+1 \right) }{ \left( {\beta}^{2}+\beta+{\beta}^{3}+\beta\,\epsilon-\epsilon \right)  \left( \beta+1 \right) }
\,\, ,\,\, \right.
$$
$$\left. -\frac { \left( {\beta}^{2}+\beta+1 \right)  \left( 1+\epsilon \right) \beta}{ \left( {\beta}^{2}+\beta+{\beta}^{3}+\beta\,\epsilon-\epsilon \right)  \left( -2+\beta \right)} \right)
$$
while at $\hat t_4$ the intersection point we get is the original starting point
(this is not surprising because the orbit lies on the cone through these points):
$$\left( 1/3 , 1/3 , 1/3\right) \mbox{ , } \left( \frac {1-\beta}{2-\beta},  0, \frac{1}{2-\beta} \right).$$
These four points in $\Sigma_A\times \Sigma_B$ together with
the apex $(T_A,T_B)$ determine a cone (for each $\epsilon>0)$.

However, remember we want to compute the cone for the induced flow.
This means that we have to take the intersection of
the lines from $(E^A,E^B)$ through these points with $\partial \Sigma$.
Since $\epsilon>0$ is small, these points will be contained in $V_0\times \partial \Sigma_B^{13}$.
This gives at $\hat t_1$,
$$\left( 1/3
\frac {{\beta}^{3}+2\,{\beta}^{2}+2\,\beta+1+3\,\beta\,\epsilon}{{\beta}^{3}+2\,{\beta}^{2}+2\,\beta+2\,{\beta}^{2}\epsilon+1+\epsilon+6\,\beta\,\epsilon}
\,\, , \,\,
1/3{\frac {{\beta}^{3}+2\,{\beta}^{2}+2\,\beta+3\,{\beta}^{2}\epsilon+1+6\,\beta\,\epsilon}{{\beta}^{3}+2\,{\beta}^{2}+2\,\beta+2\,{\beta}^{2}\epsilon+1+\epsilon+6\,\beta\,\epsilon}}
\,\, , \,\, \right. $$
$$\left. 1/3\frac {{\beta}^{3}+2\,{\beta}^{2}+2\,\beta+3\,{\beta}^{2}\epsilon+1+3\,\epsilon+9\,\beta\,\epsilon}{{\beta}^{3}+2\,{\beta}^{2}+2\,\beta+2\,{\beta}^{2}\epsilon+1+\epsilon+6\,\beta\,\epsilon} \right)
$$
$$\left( \frac {-1+\beta}{-2+\beta}\,\, , \,\, 0 \,\, , \,\, 
\frac{1}{2-\beta} \right)$$
at $\hat t_2$,
$$\left( 1/3 , 1/3 , 1/3 \right),$$
$$\left( \frac {{\beta}^{3}-\beta-1+3\,{\beta}^{2}\epsilon-7\,\epsilon+2\,\beta\,\epsilon+{\beta}^{2}}{ \left( {\beta}^{2}+2\,\beta+1+3\,\beta\,\epsilon+6\,\epsilon \right)  \left( -2+\beta \right) }
\,\, , \,\, 0
\,\, , \,\, 
-\frac {2\,\beta+1+5\,\epsilon+{\beta}^{2}+2\,\beta\,\epsilon}{ \left( {\beta}^{2}+2\,\beta+1+3\,\beta\,\epsilon+6\,\epsilon \right)  \left( -2+\beta \right) }\right) $$
at
$\hat t_3$,
$$\left( 1/3\frac {2\,{\beta}^{3}+2\,{\beta}^{2}+\beta+{\beta}^{4}+6\,{\beta}^{2}\epsilon+3\,\epsilon+3\,{\beta}^{3}\epsilon+3\,\beta\,\epsilon}{2\,{\beta}^{3}+2\,{\beta}^{2}+\beta+{\beta}^{4}+4\,{\beta}^{2}\epsilon+2\,\epsilon+3\,{\beta}^{3}\epsilon}
\,\, , \,\, 
1/3\frac {2\,{\beta}^{3}+2\,{\beta}^{2}+\beta+{\beta}^{4}+3\,{\beta}^{2}\epsilon+3\,\epsilon+3\,{\beta}^{3}\epsilon}{2\,{\beta}^{3}+2\,{\beta}^{2}+\beta+{\beta}^{4}+4\,{\beta}^{2}\epsilon+2\,\epsilon+3\,{\beta}^{3}\epsilon}
\,\, , \,\,  \right. $$
$$\left. 1/3 \frac {\beta\, \left( 2\,{\beta}^{2}+2\,\beta+1+{\beta}^{3}+3\,\beta\,\epsilon+3\,{\beta}^{2}\epsilon-3\,\epsilon \right) }{2\,{\beta}^{3}+2\,{\beta}^{2}+\beta+{\beta}^{4}+4\,{\beta}^{2}\epsilon+2\,\epsilon+3\,{\beta}^{3}\epsilon} \right)
$$
$$\left( \frac {1-\beta}{2-\beta} \,\, , \,\,  0 \,\, , \,\,  \frac{1}{ 2- \beta} \right)$$
and at 
$\hat t_4$ again the point we started with
$$\left ( 1/3 , 1/3 , 1/3 \right) \,\, , \,\, 
\left( \frac {\beta+\epsilon-1}{-2+\beta} \,\, , \,\,  0 \,\, , \,\, \frac {1-\epsilon}{-2+\beta} \right) .$$
These four points determine a quadrangle $\Q^\epsilon_{V_0,R_0}$ 
in  $V_0\times \partial \Sigma_B^{13}$.

Next we find the intersection points of 
these cones with $\Sigma_A\times V_1$
and then take the intersections with $\partial \Sigma$ 
of half-lines from $E$ in the direction of these points.
Since the expressions are rather similar to the ones before,
we will only give these final points in $\partial \Sigma_B \times V_1$.
The intersection corresponding to  $\hat t_1$ is 
$$\left(
\frac {\beta+\beta\,\epsilon+1+\epsilon}{ \left( 3\,\epsilon+\beta+1 \right)  \left( \beta+2 \right) }\,\, ,  \,\, 0
\,\, , \,\, \frac {2\,\beta+1+5\,\epsilon+{\beta}^{2}+2\,\beta\,\epsilon}{ \left( 3\,\epsilon+\beta+1 \right)  \left( \beta+2 \right) }\right) \,\, , \,\, 
\left( 1/3 , 1/3 , 1/3 \right),$$
to $\hat t_2$ is 
$$\left( \frac{1}{\beta+2} \,\, , \,\, 0 \,\, , \,\, \frac {\beta+1}{\beta+2} \right),$$
$$\left( 1/3\frac {3\,{\beta}^{2}\epsilon-9\,\beta\,\epsilon+1+9\,\epsilon+{\beta}^{3}+3\,{\beta}^{3}\epsilon+\beta+{\beta}^{4}}{2\,{\beta}^{2}\epsilon-5\,\beta\,\epsilon+1+5\,\epsilon+{\beta}^{3}+3\,{\beta}^{3}\epsilon+\beta+{\beta}^{4}}
\,\, , \,\, 
1/3
\frac {3\,{\beta}^{2}\epsilon-6\,\beta\,\epsilon+1+3\,\epsilon+{\beta}^{3}+3\,{\beta}^{3}\epsilon+\beta+{\beta}^{4}}{2\,{\beta}^{2}\epsilon-5\,\beta\,\epsilon+1+5\,\epsilon+{\beta}^{3}+3\,{\beta}^{3}\epsilon+\beta+{\beta}^{4}}
\,\, , \,\,  \right. $$
$$\left. 1/3 \frac {1+{\beta}^{3}+\beta+{\beta}^{4}+3\,\epsilon+3\,{\beta}^{3}\epsilon}{2\,{\beta}^{2}\epsilon-5\,\beta\,\epsilon+1+5\,\epsilon+{\beta}^{3}+3\,{\beta}^{3}\epsilon+\beta+{\beta}^{4}}\right), $$
to $\hat t_3$, 
$$\left( \frac {2\,\beta\,\epsilon+{\beta}^{2}+\beta+2\,\epsilon}{ \left( 3\,\epsilon+\beta+1 \right) \beta\, \left( \beta+2 \right) }
\,\, , \,\, 0 \,\, , \,\, 
\frac {3\,{\beta}^{2}\epsilon+4\,\beta\,\epsilon+{\beta}^{3}+2\,{\beta}^{2}+\beta-2\,\epsilon}{ \left( 3\,\epsilon+\beta+1 \right) \beta\, \left( \beta+2 \right) }\right) \,\, , \,\, 
\left( 1/3 , 1/3 , 1/3 \right), $$
and to $\hat t_4$,
$$\left( \frac{1}{\beta+2}\,\, , \,\, 0 \,\, , \,\, \frac {\beta+1}{\beta+2} \right), $$
$$\left(  1/3 \frac {6\,\beta\,\epsilon+1-3\,\epsilon+{\beta}^{3}}{2\,\beta\,\epsilon+1+\epsilon+{\beta}^{2}\epsilon+{\beta}^{3}}
\,\, , \,\, 
1/3\frac {{\beta}^{2}-\beta+1+3\,\epsilon}{{\beta}^{2}-\beta+\beta\,\epsilon+1+\epsilon}
\,\, , \,\, 
1/3\frac {1+{\beta}^{3}-3\,\beta\,\epsilon+3\,\epsilon+3\,{\beta}^{2}\epsilon}{2\,\beta\,\epsilon+1+\epsilon+{\beta}^{2}\epsilon+{\beta}^{3}} \right) .
$$
These points form the quadrilaterals $\Q_{V_1,R_0}^\epsilon$ in 
$\partial \Sigma_B^{12}\times V_1$. 

To get Proposition~\ref{prop:quadr} we differentiate the points
forming the quadrilaterals $\Q_{V_0,R_0}^\epsilon$ and 
$\Q_{V_1,R_0}^\epsilon$ with respect to $\epsilon$. Since these points correspond
to probability vectors, the sum of these derivatives is equal to zero. 
So we merely need to take the sum of the absolute values of these
derivatives (or twice the positive terms).

\end{proof}

Since the calculations from $R_1$ are similar to those
done in the  Proposition~\ref{prop:quadr} we shall not show them here.
The maple worksheet in which these computations are done
can be obtained from the authors on request.

\begin{figure}[htp]
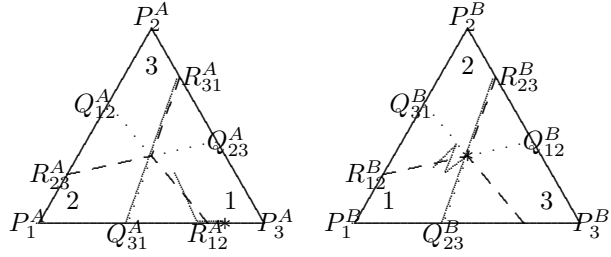
 \hfil
\beginpicture
\dimen0=0.15cm
\setcoordinatesystem units <\dimen0,\dimen0>
\setplotarea x from 0 to 30, y from -5 to 20
\setlinear
\setsolid
\plot 0 0 20 0 /
\plot 0 0 10 17.3 20 0 /
\setdashes
\plot 15 0 10 5.98 /
\plot 2.5 4.33 10 5.98 /
\setdots <0.3mm>
\plot 7.68 0 12.3 13 /
\setdashes
\plot 12.5 13 10 5.98 /
\setdots
\plot 10 5.98 6.14 10.60 /
\plot 10 5.98 15.80 7.26 /
\plot 10 5.98 7.88 0 /
\setsolid
\put {$R_{12}^A$} at 15 -1
\put {$R_{31}^A$} at 14.5 13
\put {$R_{23}^A$} at 1 4.33
\put {$Q^A_{12}$} at 5 10.60
\put {$Q^A_{23}$} at 16.80 7.26
\put {$Q^A_{31}$} at 7.88 -1
\put {$P^A_1$} at -1 0
\put {$P^A_3$} at 21 0
\put {$P^A_2$} at 10 18.3
\put {$2$} at  3 2
\put {$3$} at 10 14
\put {$1$} at 17 2
\put {$*$} at 16.5 0 
\setdots <0.3mm>
\plot 16.5 0.2 14 0.2 12 4.6 / 
\setcoordinatesystem units <\dimen0,\dimen0> point at -28 0
\setplotarea x from 0 to 30, y from -5 to 20
\setsolid
\plot 0 0 20 0 /
\plot 0 0 10 17.3 20 0 /
\setdashes
\plot 15 0 10 5.98 /
\plot 2.5 4.33 10 5.98 /
\plot 12.5 13 10 5.98 /
\setdots <0.3mm>
\plot 7.68 0 12.3 13 /
\setdots
\plot 10 5.98 6.14 10.60 /
\plot 10 5.98 15.80 7.26 /
\plot 10 5.98 7.88 0 /
\setsolid
\put {$R_{23}^B$} at 14.5 13
\put {$R_{12}^B$} at 1 4.33
\put {$Q^B_{31}$} at 5 10.60
\put {$Q^B_{12}$} at 16.80 7.26 
\put {$Q^B_{23}$} at 7.88 -1 
\put {$P^B_1$} at -1 0
\put {$P^B_3$} at 21 0
\put {$P^B_2$} at 10 18.3
\put {$1$} at  3 2
\put {$2$} at 10 14
\put {$3$} at 17 2
\put {$*$} at 10 5.98 
\setdots <0.3mm>
\plot 10 5.98 8 4.5 9 7 7 5  / 
\endpicture
\caption{\label{fig:ssb-bothernb}{\small 
The simplices $\Sigma_A$ and $\Sigma_B$.   The line $V_1\subset \Sigma^B$ is drawn in the right triangle
and the line $V_2\subset \Sigma^A$ on the left, and the first 
four line segments of the orbit (starting at time $t_0=0$ at the point $*$) which are
computed in the proof of Proposition~\ref{prop:quadr} are shown. Notice that $t_1,t_2,t_3,t_4$ correspond to half-lines 
(2), (3), (4), (1) in the middle of Figure~\ref{fig:axis} and to (4), (1), (2), (3) on the right. This can be seen by taking
the cones through the target points, intersect them respectively with $\Sigma^A\times V_1$ and
$V_2\times \Sigma^B$, and then  project from $E$ on the boundary of $\Sigma$.}}
\end{figure}

\begin{prop}\label{prop:quadr1}
For each $\epsilon>0$ small, there exists a quadrilateral  
$\Q_{V_1,R_1}^\epsilon \subset  \partial \Sigma_A^{12} \times V_1$
 with corners in the
coordinate axes, and such that the sum-distance of these
corners  (1),(2),(3),(4) (labeled as in Figure~\ref{fig:axis})
to $(R^B_{12},E^B)$ 
are equal,  up to terms of order $\epsilon^2$, to 
\begin{equation} \begin{array}{rl}
& \dfrac{2\epsilon}{2+\beta} \\
& \\
& \dfrac{2}{3}\dfrac{(2-3\beta+\beta^2)\epsilon}{1+\beta^3}\mbox{ for }\beta\in (0,1/2)\mbox{ and }
 \dfrac{2}{3}\dfrac{1-\beta}{1-\beta+\beta^2} \mbox{ for }\beta\in (1/2,1), \\
& \\ 
&  \dfrac{2(1-\beta)\epsilon}{2+\beta}, \quad \\
& \\
&\dfrac{2}{3}\dfrac{(2-3\beta+\beta^2)\epsilon}{\beta(1-\beta+\beta^2)} \mbox{ for }\beta\in (0,1/2)
\mbox{ and } \dfrac{2}{3}\dfrac{(1-\beta^2)\epsilon}{\beta(1-\beta+\beta^2)} \mbox{ for }\beta\in (1/2,1)
\end{array}\label{eq:quadrV1R1}\end{equation}
Similarly there exists a quadrilateral 
$\Q_{V_2,R_1}^\epsilon \subset V_2\times \partial \Sigma_B^{12}$
with corners in the
coordinate axes, and such that the sum-distance of these
corners  (1),(2),(3),(4) (labeled as in the figure on the right in Figure~\ref{fig:axis})
corners to $(E^A,R^A_{12})$ are equal, up to terms of order $\epsilon^2$, to
\begin{equation}\begin{array}{rl}
& \dfrac{2}{3}\dfrac{(4-3\beta^2-\beta^3)\epsilon}{1+3\beta+3\beta^2+2\beta^3},\,\,
 \dfrac{2(2-\beta-\beta^2)\epsilon}{(1+2\beta)(2-\beta)\beta}, \\
& \\
& \quad  \dfrac{2}{3}\dfrac{(11+21\beta+15\beta^2+7\beta^3)\epsilon}{(1+3\beta+3\beta^2+2\beta^3)(2+\beta)} ,\,\, 
\dfrac{2(2-\beta-\beta^2)\epsilon}{(1+2\beta)(1+\beta)(2-\beta)}.
\end{array}
\label{eq:quadrV2R1}
\end{equation}
The first entry map $R_1$ maps 
$Q_{V_1,R_1}^\epsilon$ into $Q_{V_2,R_1}^\epsilon$.
\end{prop}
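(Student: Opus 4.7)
The plan is to mirror the proof of Proposition~\ref{prop:quadr} almost verbatim, replacing the data of the first leg with the data of the second leg. So I would pick a one-parameter family of starting points in $\partial\Sigma_A^{12}\times V_1$ parametrised by $\epsilon>0$ (analogous to the family $p^A=(1/3,1/3,1/3)$, $p^B=(1-\beta-\epsilon,0,1+\epsilon)/(2-\beta)$ used for $R_0$), chosen so that when $\epsilon=0$ the point lies on $\tilde\Gamma$. From Figure~\ref{fig:ssb-bothernb} the targets during this leg are, in order, the four pairs $(P^A_i,P^B_j)$ corresponding to the relabelling of half-lines (2),(3),(4),(1) shown in the middle of Figure~\ref{fig:axis}. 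I would then compute, exactly as in the proof of Proposition~\ref{prop:quadr}, the four hitting times $\hat t_1,\dots,\hat t_4$ at which an indifference plane is next crossed under the original flow, and the corresponding positions $(p^A(\hat t_k),p^B(\hat t_k))$.

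Next I would identify the cone-target $(T_A,T_B)$ for this leg. This is the intersection point of $Z^B_{1,2}\cap Z^A_{1,2}$ appropriate to the strategy pattern for $R_1$; once $T_A$ and $T_B$ are written down, the cone through the four hitting points over $(T_A,T_B)$ is completely determined. I would then intersect the line from $(T_A,T_B)$ through each $(p^A(\hat t_k),p^B(\hat t_k))$ with the three-dimensional slab $\Sigma_A\times V_1$ (to recover the four corners of a quadrilateral in the section where the leg starts), and with $V_2\times \Sigma_B$ (to obtain the four corners of a quadrilateral in the section where the leg ends). A final radial projection from the Nash equilibrium $E$ onto $\partial \Sigma$ lifts these points into $\partial \Sigma_A^{12}\times V_1$ resp.\ $V_2\times \partial\Sigma_B^{12}$, producing the quadrilaterals $\Q^\epsilon_{V_1,R_1}$ and $\Q^\epsilon_{V_2,R_1}$. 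The statement that $R_1$ maps the first quadrilateral into the second is then automatic, because both quadrilaterals live on the same invariant cone constructed geometrically in Section~\ref{ss:newbit}.

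To extract the four distances claimed in (\ref{eq:quadrV1R1}) and (\ref{eq:quadrV2R1}), I would differentiate the resulting rational expressions in $\epsilon$ at $\epsilon=0$. Because each corner is a probability vector the coordinate derivatives sum to zero, so the sum-distance $\dist(\cdot,\cdot)$ of each corner to the base point on $\tilde\Gamma$ equals twice the sum of the positive entries of the $\epsilon$-derivative; this gives the leading-order expressions, with an $O(\epsilon^2)$ error coming from the higher-order Taylor terms of the rational functions involved. As for Proposition~\ref{prop:quadr}, the bookkeeping is simply too long to do by hand, so I would carry out the whole computation in Maple using the same worksheet conventions as before.

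The main obstacle is the case split at $\beta=1/2$ that appears in (\ref{eq:quadrV1R1}) and (\ref{eq:quadrV2R1}). Unlike the situation for $R_0$, for the leg corresponding to $R_1$ the ordering of the players' second-best responses changes at $\beta=1/2$, which means that the actual sequence of targets $(i_k,j_k)$ along the first four segments after the starting slab is \emph{not} the same for $\beta\in(0,1/2)$ and $\beta\in(1/2,1)$. Consequently, the hitting times $\hat t_k$ and the intersection points with the cone satisfy different algebraic expressions on the two sub-intervals. I would handle this by first identifying exactly where the switch occurs (the value of $\beta$ at which one of the linear forms $v^A_i-v^A_j$ or $v^B_k-v^B_l$ changes sign at the cone-target), and then running the Maple computation separately on each of the two sub-intervals and confirming that the two formulas agree at $\beta=1/2$. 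Apart from this bifurcation, the computation is entirely parallel to the proof of Proposition~\ref{prop:quadr}.
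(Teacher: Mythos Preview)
Your proposal is correct and matches the paper's approach exactly: the paper states that ``the calculations for $R_1$ are similar to those done in Proposition~\ref{prop:quadr}'' and refers the reader to the same Maple worksheet, without displaying the intermediate expressions.

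One small caveat: your diagnosis of the $\beta=1/2$ case split is probably more dramatic than the actual mechanism. Note that the same split already appears in (\ref{eq:quadrV1R0}) for the \emph{target} quadrilateral of $R_0$, even though the proof of Proposition~\ref{prop:quadr} uses a single fixed target sequence $(P_3^A,P_1^B),(P_1^A,P_1^B),(P_1^A,P_2^B),(P_3^A,P_2^B)$ throughout with no branching. The split there arises at the very last step, when one takes the sum of absolute values of the $\epsilon$-derivatives of the projected corner coordinates: which entries are positive and which are negative depends on $\beta$, and one of these signs flips at $\beta=1/2$. The same thing is what produces the split in (\ref{eq:quadrV1R1}), and it is consistent with (\ref{eq:quadrV2R1}) having no split at all. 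So you should not need to run two separate orbit computations with different target sequences; a single symbolic computation followed by a careful sign analysis of the derivative suffices. Either way, the Maple run will make this apparent.
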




\subsection{The dynamics of a Jitter map}
\label{ss:modelmap}

The dynamics near the periodic orbit $\Gamma$
is very complicated. As we have seen,
the Poincar\'e transition map to a section at a point of $\Gamma$ is a composition of
maps of the following form: 
$$x\mapsto A_1 \circ R_{2\pi/||y||+B(y)} \circ A_0^{-1}(x)\mbox{ where }y=A_0^{-1}(x).$$
In this section we shall first study the iterations of one of these maps.
In the next section we will then consider the composition of two 
suitable Jitter maps.

Let $\dist$ be a metric on $\R^2$ with the property
that each half-line through the origin intersects $\partial D_r$ in a unique point
(where $D_r:=\{z\in \rz^2;\, |z|:=\dist(z,0)=r\}$).
Next take
$R_t\colon \rz^2\to \rz^2$ to be the quadrilateral rotation, i.e. the unique map
so that  $\rz^2\times \rz \ni (z,t)\mapsto R_t(z)\in \rz^2$
is continuous, $R_0=id$, so that  for each $z\in \rz^2$,
$\dist(R_t(z),0)=\dist(z,0)$ and so that for each $x\in \rz^2\setminus \{0\}$ the
angles of $R_t(z)$ and $z$ differ by $t$.
If $\dist$ is the Euclidean metric then this coincides with the usual rotation,
but in our setting it is convenient to take for
$\dist$ the sum-metric on $\rz^2$ (defined by $\dist(z,z'):=||z-z'||:=|x-x'|+|y-y'|$
for $z=(x,y)$ and $z'=(x',y')$) in which case $t\mapsto R_t(z)$ moves each point
along a square $\partial D_r(0)$.

Next define two homeomorphisms $A_0,A_1\colon \rz^2\to \rz^2$ which
preserve the axes and which map quadrilaterals containing $0$
with corners on the axes to quadrilaterals of the form $\partial D_r=\{(x,y)\in \rz^2 \,;  \,\, |x|+|y|=r\}$.
Assume that there exist $\lambda<1<\mu$ so that
for each $t\in (\lambda,\mu)$ there exists a  smooth curve $l_t$ through $0$
so that  
\begin{equation}
\dist(A_0^{-1} \circ A_1 (z),0)=t \cdot \dist(z,0)\mbox{ for each }z\in l_t
\label{eq:excontr}
\end{equation}
and such that $A_0^{-1}A_1(l_t)$ is transversal to  $\partial D_r$ for each $r>0$.
Consider 
$$F(z)=A _1\circ R_{\theta(w)} \circ A_0^{-1}(z)$$
where 
$$\theta(w)=2\pi/ ||w||+B(w)\mbox{ , }w=A_0^{-1}(z) \mbox{ and }||w||=\dist(w,0)$$
and $w\mapsto B(w) \in \rz$ is a continuous function which converges to zero as $w\to 0$. 
We will refer to $F$ as a `jitter map'.
\medskip

Let us prove that such a Jitter map maps are 'chaotic'.

\begin{prop}[A jitter map has many periodic orbits]
\label{prop:modelmap}
Let $F$ be as above and assume (\ref{eq:excontr}). Then 
the map $F$ has
periodic orbits of arbitrary period in each neighbourhood
of $0$.
\end{prop}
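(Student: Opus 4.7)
The plan is to pass to $w$-coordinates via $w = A_0^{-1}(z)$, in which the conjugated map
$$g(w) := A_0^{-1} F A_0(w) = A_0^{-1}A_1\bigl(R_{\theta(w)}w\bigr),\qquad \theta(w) = \frac{2\pi}{\|w\|} + B(w),$$
has the same periodic-orbit structure as $F$. Each application of $g$ factors as (i) a rotation $R_{\theta(w)}$ preserving the quadrilateral $\partial D_{\|w\|}$ but rotating by the diverging angle $\theta(w)$, followed by (ii) the homeomorphism $A_0^{-1}A_1$, whose effect on the norm is multiplication by a factor $t\in(\lambda,\mu)$ determined by which curve $l_t$ the point $R_{\theta(w)}w$ lies on; by the hypotheses this factor depends continuously on the direction and attains every value in $(\lambda,\mu)$.

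The essential mechanism I would exploit is the extreme sensitivity of $\theta$ to $\|w\|$: as $\|w\|$ varies in any interval $[r,r(1+\delta)]$ with $r$ small, the angle $\theta(w)$ changes by roughly $2\pi\delta/r$, which diverges as $r\to 0$. Consequently, for any target factor $t^\ast\in(\lambda,\mu)$ there are infinitely many $w$ of norm near $r$ for which $R_{\theta(w)}w$ lands on $l_{t^\ast}$, giving $\|g(w)\|=t^\ast\|w\|$. Iterating, for any prescribed sequence $t_1,\dots,t_n\in(\lambda,\mu)$ with $t_1 t_2\cdots t_n=1$ one can inductively produce candidate orbits $w_0,w_1,\dots,w_n$ with $w_{k+1}=g(w_k)$ and $\|w_{k+1}\|=t_{k+1}\|w_k\|$, so $\|w_n\|=\|w_0\|$; these have the correct radial profile for a period-$n$ orbit.

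To upgrade norm-closure to a genuine fixed point of $g^n$, my plan is a degree / Brouwer argument. Fix $n$ and a neighborhood $U$ of $0$, pick $r>0$ small enough that an annulus $\{(1-\eta)r\le \|w\|\le(1+\eta)r\}\subset U$ has total $\theta$-twist exceeding $2\pi n$, and build a small topological disc $\Delta$ of wedge shape in quadrilateral polar coordinates inside this annulus. Using the two-parameter freedom (radial and angular) within $\Delta$ together with the transversality of each $A_0^{-1}A_1(l_t)$ to $\partial D_r$, one argues that the map $w\mapsto g^n(w)-w$ has nonzero winding around $0$ along $\partial\Delta$; degree theory then produces a fixed point of $g^n$ in $\Delta\subset U$. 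Because $r$ can be shrunk arbitrarily, the resulting periodic orbits accumulate at $0$, yielding infinitely many distinct period-$n$ orbits in every neighborhood of $0$.

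The main obstacle I expect is keeping the radial and angular behavior of all $n$ iterates simultaneously coherent. At each step the norm evolves by the factor $t(\cdot)$ coming from the curves $l_t$, while the angle evolves via the dominant $2\pi/\|w\|$ term plus the bounded correction $B(w)$ and the bounded angular distortion of $A_0^{-1}A_1$; the interplay between these over $n$ steps is what supplies the required nonzero winding, and tracking it carefully amounts to an induction ensuring that the $O(1)$ perturbations never swamp the diverging rotation. The hypotheses that $B(w)\to 0$ as $w\to 0$, that $t$ attains all values in $(\lambda,\mu)$, and that each $A_0^{-1}A_1(l_t)$ is transversal to the quadrilaterals $\partial D_r$ are tuned precisely so this induction succeeds, with the $1/\|w\|$ divergence ultimately dominating all bounded errors.
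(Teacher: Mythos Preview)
Your overall strategy matches the paper's: conjugate to $\hat F = A_0^{-1}A_1 \circ R_\theta$, use the curves $l_t$ with scaling factors $t_1,\dots,t_n$ satisfying $\prod t_i = 1$, exploit the $2\pi/\|w\|$ divergence of $\theta$, and close with Brouwer. The divergence is at the Brouwer step. You propose a direct two-dimensional degree argument on $w \mapsto g^n(w) - w$ over a wedge $\Delta$, but you give no mechanism for computing that winding number; with $n$ successive rotations at $n$ different and a priori unknown radii, the behaviour of $g^n$ along $\partial\Delta$ is not transparent, and ``two-parameter freedom'' is not obviously enough to control $n$ angular conditions simultaneously. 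This is precisely the coherence obstacle you flag in your last paragraph, and your sketch does not resolve it.

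The paper sidesteps this by lifting to an $n$-dimensional parameter space rather than working in the plane. It treats $r$ and the intermediate scaling factors $a_1,\dots,a_n \in (\lambda,\mu)$ (subject to $\prod a_i = 1$) as independent unknowns, and encodes the period-$n$ condition as a system of $n$ angular equations $H(r,a) = G(r,a) \bmod 2\pi$, where $H(r,a) = (2\pi/r,\,2\pi/(a_2 r),\,\dots)$ records the dominant rotation at each step and $G$ collects the bounded angles between the curves $l'_{a_i}$ and $l_{a_{i+1}}$. Since $H$ is explicitly invertible and $(G-H_2)\circ H_1^{-1}$ sends any cube $2\pi\underline k + [0,2\pi]^n$ (with the $k_i$ large) into a small ball, Brouwer yields a fixed point modulo $2\pi$ immediately. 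This decouples the $n$ steps and turns the topological argument into a one-liner; your two-dimensional winding formulation would have to recover this structure implicitly, and as written it does not.
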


\begin{prop}[A jitter map contains a shift with infinitely many symbols]
\label{prop:modelmap2}
Let $F$ be as above and assume (\ref{eq:excontr}). Then there exists $N_0$ so that 
for each sequence $k_i\in \nz$ satisfying
$$\lambda \le \dfrac{k_{i+1}}{k_i}\le \mu \mbox{ with }k_i\ge N_0 \label{eq:allowed}$$
there exist a sequence $\delta_i\in (0,1)$ and 
 $z\ne 0$ with
$||F^i(z)||\in (\frac{1}{k_i+1+\delta_i},\frac{1}{k_i+\delta_i})$ for all $i\ge 0$.
\end{prop}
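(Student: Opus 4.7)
The plan is to reduce the dynamics to a tractable radial-angular form by changing variables to $w := A_0^{-1}(z)$. In these coordinates the iteration becomes $w_{i+1} = G(w_i)$ with $G := A_0^{-1} \circ A_1 \circ R_{\theta(\cdot)}$. Since $R_{\theta(w_i)}$ preserves the sum-norm $\|\cdot\|$, hypothesis (\ref{eq:excontr}) immediately yields $\|w_{i+1}\| = t(w_i)\,\|w_i\|$, where $t(w_i) \in (\lambda,\mu)$ is the parameter for which $R_{\theta(w_i)}(w_i) \in l_{t(w_i)}$. Thus the radial behaviour of the orbit is governed entirely by where the rotation places the point on $\partial D_{\|w_i\|}$, and the problem becomes one of prescribing angles.

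The central geometric input is a sweeping property: on the annulus $U_k := \{w \st \|w\| \in (1/(k+2),\,1/k)\}$, the angle $\theta(w) = 2\pi/\|w\| + B(w)$ varies by $4\pi$ in $\|w\|$ along any fixed radial direction, while the correction $B(w)$ is uniformly small once $k \ge N_0$ with $N_0$ large. Hence $R_{\theta(w)}(w)$ sweeps at least one full turn around $\partial D_{\|w\|}$, and by the transversality of the $l_t$-foliation in (\ref{eq:excontr}) every ratio $t \in (\lambda,\mu)$ is realised on some radial sub-arc of $U_k$. The hypothesis $\lambda \le k_{i+1}/k_i \le \mu$ then ensures that the target ratio $k_i/k_{i+1}$ lies in this admissible range, so $G^{-1}(U_{k_{i+1}}) \cap U_{k_i}$ contains a connected arc stretching radially across $U_{k_i}$.

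I would then finish by a standard nested-arcs shooting argument. Define
\[
K_n := \{\,w \in U_{k_0} \st G^i(w) \in U_{k_i} \text{ for all } 0 \le i \le n\,\}.
\]
Using the sweeping property inductively, one selects at each stage a connected compact sub-arc $\gamma_n \subset K_n$ with $\gamma_{n+1} \subset \gamma_n$. By compactness, $\bigcap_n \gamma_n \neq \emptyset$, and any $w_\infty$ in the intersection yields $z := A_0(w_\infty)$ with $\|F^i(z)\| = \|G^i(w_\infty)\| \in (1/(k_i+2),\,1/k_i)$ for every $i \ge 0$. The $\delta_i \in (0,1)$ announced in the conclusion are then read off as the appropriate fractional shifts of $1/\|G^i(w_\infty)\| - k_i$.

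The main obstacle will be the inductive selection of compatible branches: because $\theta$ winds multiply along each radius in $U_{k_i}$, the set $G^{-1}(U_{k_{i+1}}) \cap U_{k_i}$ is typically a disjoint union of several arcs, and one must pick a branch that lies in $\gamma_{n-1}$ and whose image is still long enough to accommodate all subsequent selections. Making this rigorous requires uniform estimates showing that the $l_t$-curves remain transversal to the quadrilaterals $\partial D_r$ under $A_0^{-1}\circ A_1$ with a uniform lower bound on the transversality angle, and that the perturbation $B(w)$ does not pinch any sub-arc. Both are controlled by taking $k_i \ge N_0$ large, so that $B$ is uniformly small, and by the compactness of the admissible ratio interval $[\lambda,\mu]$, which keeps the contraction/expansion factors bounded away from the degenerate endpoints.
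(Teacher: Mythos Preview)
Your approach is sound but takes a genuinely different route from the paper's. The paper does not run a nested-arcs shooting argument; instead, for each finite $n$ it encodes the requirement $\|\hat F^i(y)\| = a_1\cdots a_i \,|y|$ (with $\hat F = A_0^{-1}A_1 \circ R_\theta$) as a system of $n$ angle equations
\[
2\pi/(a_2\cdots a_i\, x) \;=\; \alpha_{i-1}(a_{i-1},a_i)\pmod{2\pi},\qquad i=1,\dots,n,
\]
packages this as $H(x,a_2,\dots,a_n)=G(x,a_2,\dots,a_n)\pmod{2\pi}$ with $H$ explicitly invertible, and then applies Brouwer's fixed-point theorem to $G\circ H^{-1}$ on a cube $2\pi\underline k+[0,2\pi]^n$ to produce a solution with $r\in\bigl(\frac{1}{k_0+1+\kappa},\frac{1}{k_0+\kappa}\bigr)$. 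The passage to $n=\infty$ is left implicit (compactness of the initial interval).

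What each approach buys: the paper's Brouwer argument sidesteps entirely the geometric difficulty you flag --- it never has to track the shape of iterated arcs or worry about whether $G^n(\gamma_n)$ is radial, monotone in radius, or wound many times, because the factors $a_i$ are treated as free parameters rather than read off a curve. Your nested-arcs construction, by contrast, handles the limit $n\to\infty$ cleanly via $\bigcap_n\gamma_n\neq\emptyset$, and is conceptually more transparent about why the shift structure arises. The price is exactly the obstacle you identify: after one application of $G$ the arc is no longer radial, and you must argue that along $G^n(\gamma_n)$ the combined angle $\phi(w)+\theta(w)$ still sweeps a full turn. This does follow (since $\theta$ varies by $\approx 4\pi$ across the annulus while the angular drift of $G^n(\gamma_n)$ over a sub-arc monotone in radius is controlled by the piecewise-linearity of $A_0^{-1}A_1$), but making it precise is the substantive work you have deferred --- and it is work the paper's algebraic route avoids altogether.
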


\begin{prop}
\label{prop:modelmapsens}
Let $F$ be as above and assume (\ref{eq:excontr}). Then there exists $N_0$ so that 
for each sequence $k_i\in \nz$ there exists a sequence $\delta_i\in (0,1)$ so that
$$\lambda \le \dfrac{k_{i+1}}{k_i}\le \mu \mbox{ and }k_i\ge N_0$$
there exists $z\ne 0$ with
$||F^i(z)||\in (\frac{1}{k_i+1+\delta_i},\frac{1}{k_i+\delta_i})$ for all $i\ge 0$.
\end{prop}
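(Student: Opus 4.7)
The plan is a nested-intersection argument. Writing the iteration in the coordinate $w = A_0^{-1}(z)$, it takes the form $w_{i+1} = \tilde F(w_i)$ with $\tilde F(w) := A_0^{-1}A_1 R_{\theta(w)}(w)$; the rotation $R_{\theta(w)}$ preserves the quadrilateral norm, so all size-change is concentrated in $A_0^{-1}A_1$, to which \eqref{eq:excontr} applies directly. Set $\Omega_k := \{w : ||w||\in(1/(k+2),1/k)\}$ and $Y_n := \{w_0\in \Omega_{k_0} : \tilde F^i(w_0)\in \Omega_{k_i}\ \text{for}\ i\le n\}$. The goal is to show each $Y_n$ is non-empty, so that Cantor's theorem yields $w\in\bigcap_n Y_n$; setting $z := A_0(w)$ and writing $||F^i(z)||=1/(k_i+\eta_i)$ with $\eta_i\in(0,2)$, the required $\delta_i\in(0,1)$ is any point of $(\max(0,\eta_i-1),\min(1,\eta_i))$, which is a non-empty subinterval of $(0,1)$ for every $\eta_i\in(0,2)$.

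The first technical ingredient is an angular-sweep observation. Parametrize $\Omega_k$ by $(\eta,\phi)\in(0,2)\times[0,2\pi)$ via $||w||=1/(k+\eta)$ and $\phi$ the quadrilateral angle of $w$. Then $R_{\theta(w)}$ rotates $w$ by $\theta(w)=2\pi(k+\eta)+B(w)\equiv 2\pi\eta+B(w)\pmod{2\pi}$, so choosing $N_0$ large enough that $|B|$ is uniformly small on the relevant annuli makes $\eta\mapsto 2\pi\eta+B(w)\pmod{2\pi}$ a continuous degree-two surjection from $(0,2)$ onto $[0,2\pi)$. In particular, for any starting angle $\phi$ and any target post-rotation angle $\psi\in[0,2\pi)$, an appropriate $\eta\in(0,2)$ achieves $\psi$.

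The second ingredient is \eqref{eq:excontr}. Setting $t_i := (k_i+\eta_i)/(k_{i+1}+\eta_{i+1})$, the bounds $\lambda\le k_{i+1}/k_i\le\mu$ together with $\eta_j\in(0,2)$ and $k_j\ge N_0$ keep $t_i$ in $(\lambda,\mu)$, so $l_{t_i}$ is available and $A_0^{-1}A_1$ scales the quadrilateral norm by exactly $t_i$ on it. If the iteration arranges $R_{\theta(w_i)}(w_i)\in l_{t_i}$ at each step, then $||w_{i+1}||=t_i||w_i||=1/(k_{i+1}+\eta_{i+1})$, so $w_{i+1}\in \Omega_{k_{i+1}}$ automatically. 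Transversality of $A_0^{-1}A_1(l_{t_i})$ to $\partial D_{||w_{i+1}||}$ selects a unique target angle $\psi_i=\psi(t_i,||w_{i+1}||)$ for the post-rotation landing of $w_i$, depending continuously on the data, and the angular-sweep observation then produces the matching $\eta_i$.

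Putting these together, I would establish non-emptiness of $Y_n$ by a backward construction. Start with any $w_n\in \Omega_{k_n}$. For each prescribed $t_{n-1}\in(\lambda,\mu)$, the preimage $(A_0^{-1}A_1)^{-1}(w_n)$ is well-defined and, by the transversality hypothesis, lies on some curve $l_t$ with $t$ depending continuously on $w_n$; picking $t=t_{n-1}$ fixes $||w_{n-1}||=||w_n||/t_{n-1}$ and thereby determines $\eta_{n-1}\in(0,2)$. The angular-sweep observation then supplies a unique $\phi_{n-1}$ such that $R_{\theta(w_{n-1})}(w_{n-1})$ equals that preimage, placing $w_{n-1}$ in $\Omega_{k_{n-1}}$. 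Iterating back yields $w_0\in Y_n$. Monotonicity $Y_{n+1}\subseteq Y_n$ is immediate, $Y_n$ is closed in the compact set $\overline{\Omega_{k_0}}$, and Cantor's theorem finishes the extraction. The main obstacle I expect is verifying the backward construction uniformly in $n$: one must check that the preimage on $l_{t_{n-1}}$ exists and is unique (requiring the family $\{l_t\}$ to cover a neighborhood of $0$ as a topological foliation), and that the angular sweep's surjectivity is preserved across all pullback steps --- controlled by choosing $N_0$ large enough that the $B$-perturbations never accumulate to destroy the sweep or the transversal intersection.
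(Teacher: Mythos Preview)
There is a genuine gap in the backward construction. Given $w_n$, the point $v := (A_0^{-1}A_1)^{-1}(w_n)$ is uniquely determined, since $A_0^{-1}A_1$ is a homeomorphism. This $v$ lies on exactly one curve $l_t$; you do not get to ``pick $t = t_{n-1}$'' --- the value of $t$ is forced by $w_n$. Since the quadrilateral rotation preserves the norm, any preimage $w_{n-1}$ satisfying $R_{\theta(w_{n-1})}(w_{n-1}) = v$ must have $||w_{n-1}|| = ||v||$, a quantity you have no control over at this stage. Nothing in the argument therefore ensures $||w_{n-1}|| \in (1/(k_{n-1}+2), 1/k_{n-1})$. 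Your angular-sweep observation is correct but does not help here: it lets you realise any post-rotation angle by varying $\eta$ over $(0,2)$, but in the backward step the norm --- and hence $\eta_{n-1}$ --- is already pinned down, so there is no sweep available. Iterating, the backward trajectory from a generic $w_n \in \Omega_{k_n}$ will simply fail to visit the prescribed annuli, and the sets $Y_n$ are not shown to be non-empty.

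The paper handles this by treating all $n$ steps simultaneously rather than recursively. Writing the condition ``$R_{\theta(w_i)}(w_i) \in l_{a_{i+1}}$'' as a congruence $\theta(\cdot) \equiv \alpha_i \pmod{2\pi}$ yields a system of $n$ equations in the unknowns $(r, a_2, \ldots, a_n)$ (the initial radius and the successive scale factors). The left-hand side is $H(r,a) = (2\pi/r,\, 2\pi/(a_2 r),\, \ldots,\, 2\pi/(a_n\cdots a_2 r))$, which is explicitly invertible; the right-hand side $G$ is continuous with small image once $N_0$ is large. Then $G \circ H^{-1}$ maps a cube $2\pi\underline{k} + 2\pi\underline{\kappa} + [0,2\pi]^n$ into itself, and Brouwer's fixed-point theorem produces a solution $(r,a)$ with $r$ in the required interval and each $a_i$ in the required range. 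This simultaneous fixed-point argument is precisely what supplies the degrees of freedom your backward recursion lacks; a routine compactness argument then passes from finite $n$ to the full infinite sequence.
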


\bigskip

The first proposition implies, for example, that
there is a sequence of fixed points of $F$
converging to $0$. 

The second proposition implies that $F$ contains a shift on infinitely many symbols.
Indeed, define the  annuli
$$Ann_k:=\{z\in \rz^2; \dfrac{1}{(k+2)}\le \vvert z \vvert \le  \dfrac{1}{k} \}.$$
The annuli with $k$ even are all disjoint.  Hence, taking $k_i\in 2\zz$  in Proposition~\ref{prop:modelmap2},
we get the existence of $x\in Ann_{k_0}$ with $F^i(x)\in Ann_{k_i}$ for all $i$.
If we would consider $k_i\in \{k_0,k_0+2\}$ this would give a one-sided shift on  two symbols,
but the proposition guarantees the existence of orbits which jump several annuli further in or out
(the number is determined by $\lambda$ and $\mu$). It follows that $F$ has positive topological entropy.
In fact, the topological entropy is infinite because for each $n$, it contains a full one-sided shift
of $n$ symbols (which has entropy $\log n$).  

\begin{cor}[A jitter map has sensitive dependence on initial conditions]
\label{cor:modelmapsens}
Let $F$ be as above and assume (\ref{eq:excontr}). Then $F$ has sensitive
dependence on initial conditions for all points in the set corresponding
to the shift on infinitely many symbols.
\end{cor}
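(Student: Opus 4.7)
The plan is to upgrade Proposition \ref{prop:modelmap2} to sensitive dependence by using a uniform separation constant controlled by the shallowest admissible annulus. Set
$$\delta := \frac{1}{N_0+2} - \frac{1}{3N_0},$$
which is strictly positive for $N_0 \ge 2$. I aim to show: for every $z$ in the shift-invariant set $\Lambda$ furnished by Proposition \ref{prop:modelmap2} and every $\epsilon > 0$, there is $z' \in \Lambda \cap B_\epsilon(z)$ and $n \ge 0$ with $\dist(F^n(z), F^n(z')) \ge \delta/2$.

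The first step is a cylinder-shrinking estimate for the symbolic coding $z \mapsto (k_i(z))_{i\ge 0}$ with $F^i(z) \in Ann_{k_i(z)}$: for each $\epsilon > 0$ one must produce $N = N(\epsilon)$ so that any two points of $\Lambda$ sharing the first $N$ symbols are within $\epsilon$ of each other. This should be implicit in Proposition \ref{prop:modelmap2}, whose construction places a point in every nested chain of prescribed annuli; the contraction rate of those nested chains controls the diameter of length-$N$ cylinders. The underlying mechanism is the explosive angular expansion of the rotation $R_{\theta(w)}$ with $\theta(w) \sim 2\pi/\|w\|$, which, combined with the affine maps $A_0$ and $A_1$, gives quantitative expansion of $F$ on the relevant region.

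The second step exploits the flexibility of admissible codings. Given the first $N$ symbols $a_0, \ldots, a_{N-1}$, the admissibility constraints $\lambda \le k'_{i+1}/k'_i \le \mu$ and $k'_i \ge N_0$ leave ample freedom. I would construct two admissible continuations of $(a_0, \ldots, a_{N-1})$: one that descends to $N_0$ by repeatedly multiplying by $\lambda$ and then sits at $N_0$, and one that climbs to (or already sits at) some value $\ge 3N_0$ and then stays there. Taking $n$ beyond the stabilisation time of both continuations, Proposition \ref{prop:modelmap2} furnishes points $z_{\mathrm{sh}}, z_{\mathrm{dp}} \in \Lambda \cap B_\epsilon(z)$ realising these continuations, and
$$|F^n(z_{\mathrm{sh}})| \ge \tfrac{1}{N_0+2}, \qquad |F^n(z_{\mathrm{dp}})| \le \tfrac{1}{3N_0},$$
so $\dist(F^n(z_{\mathrm{sh}}), F^n(z_{\mathrm{dp}})) \ge \delta$. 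By the triangle inequality, at least one of $z_{\mathrm{sh}}, z_{\mathrm{dp}}$ is $\delta/2$-separated from $z$ at iterate $n$, which provides the required $z'$.

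The main obstacle is the first step, the cylinder-shrinking estimate. Rotations by themselves preserve radii, so diameter contraction is not formal; one must carefully combine the angular twist of $R_{\theta(w)}$ near $0$ with the quadrilateral cone geometry associated to the maps $A_0, A_1$ to show that the cylinders $\{z \in \Lambda : k_i(z) = a_i \text{ for } i < N\}$ really do shrink as $N \to \infty$. I would extract the required contraction rate directly from the construction behind Proposition \ref{prop:modelmap2}, whose existence argument implicitly hinges on precisely such a contraction.
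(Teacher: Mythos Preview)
The paper does not give a proof of this corollary; it states it immediately after the discussion establishing the subshift structure and treats it as a direct consequence. Your proposal is therefore already more detailed than what the paper itself offers.

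However, your outline has a real gap at precisely the step you flag as the main obstacle. You claim the cylinder-shrinking estimate ``should be implicit in Proposition~\ref{prop:modelmap2}'' and that its existence argument ``implicitly hinges on precisely such a contraction''. Neither claim is correct: the proof of Proposition~\ref{prop:modelmap2} produces each point by applying Brouwer's fixed point theorem to $G\circ H^{-1}$ on a cube $2\pi\underline{k}+2\pi\underline{\kappa}+[0,2\pi]^n$, and Brouwer gives existence only --- no uniqueness and no contraction. Nothing in that construction bounds the diameter of the set of points sharing a given length-$N$ itinerary, so you have located the obstacle but not removed it. The mechanism that actually forces separation is the one you mention only in passing: the radial derivative of $\theta(w)=2\pi/\|w\|+B(w)$ is of order $\|w\|^{-2}$, so a radial discrepancy $\eta$ between two nearby points produces an angular discrepancy of order $\eta/\|w\|^{2}$ after one rotation, and assumption~(\ref{eq:excontr}) (the curves $l_t$ transversal to $\partial D_r$ with $t$ ranging over $(\lambda,\mu)$) then feeds angular discrepancy back into radial discrepancy under $A_0^{-1}A_1$. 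It is this explicit twist-and-shear expansion --- not any contraction hidden inside the Brouwer argument --- that one must invoke, either to prove cylinder shrinking or to bypass it and establish sensitivity directly.
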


\begin{figure}[htp]
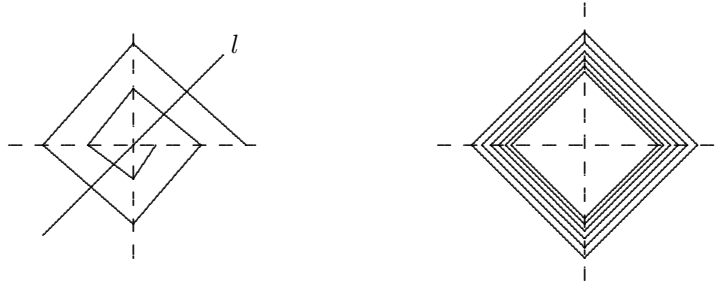
 \hfil
\beginpicture
\dimen0=0.15cm
\setcoordinatesystem units <\dimen0,\dimen0>
\setplotarea x from -20 to 20, y from -10 to 10
\setdashes
\plot -11 0 11 0 /
\plot 0 -10 0 10 /
\setsolid
\plot 10 0 0 9 -8 0 0 -7 6 0 0 5 -4 0 0 -3 2 0 /
\plot -8 -8 8 8 /
\put {$l$} at 9 9 
\setcoordinatesystem units <\dimen0,\dimen0> point at -40 0 
\setplotarea x from -20 to 20, y from -10 to 10
\setdashes
\plot -13 0 13 0 /
\plot 0 -12 0 13 /
\setsolid
\plot 10 0 0 10 -10 0 0 -10 10 0 /
\plot 9.1 0 0 9.1 -9.1 0 0 -9.1 9.1 0 /
\plot 8.3 0 0 8.3 -8.3 0 0 -8.3 8.3 0 /
\plot 7.6 0 0 7.6 -7.6 0 0 -7.6 7.6 0 /
\plot 7 0 0 7 -7 0 0 -7 7 0 /
\plot 6.5 0 0 6.5 -6.5 0 0 -6.5 6.5 0 /
\endpicture
\caption{\label{fig:fixedpoints}{\small On the left, the image of a curve $l$ through $0$ is a spiral.
A conveniently chosen curve $l$ contains a sequence of fixed points, converging to $0$.
On the right, the sequence of annuli discussed below the statement of Proposition~\ref{prop:modelmap2}.
Orbits can jump between annuli according to these allowed sequences (\ref{eq:allowed}).}}
\end{figure}

\begin{proofof}{Proposition~\ref{prop:modelmap}}
Let us start by showing why $F$ has a sequence of
fixed points tending to $0$. 
It will be convenient to consider
$$\hat F(y):=A \circ R_{\theta(y)}(y)\mbox{ where }A=A_0^{-1}\circ A_1.$$  
By assumption there exists a curve $l$ through $0$ 
so that $\vvert A (z) \vvert =\vvert z \vvert $
for $z\in l$. Let $l_+$ be one of the two components of $l\setminus \{0\}$,
let $l'_+:=A(l_+)$
and let $m_+:=A_1(l_+)$.
We shall find a sequence of fixed points $y$ of $F$ on $m_+\setminus \{0\}$.
Indeed, for each $r>0$ small, 
let $\alpha(r)$ be the angle between the vectors $l_r'O$ and $l_rO$ 
where 
$l_r'\in l'_+$ and $l_r\in l_+$ are the unique points so that $\vert\vert   l_r'  \vert \vert =\vert\vert  l_r \vert \vert =r$.
Then choose $y\in l'_+$ so that 
\begin{equation}
\theta(y)=2\pi /\vert\vert  y\vert \vert +B(y)=\alpha(\vert \vert y \vert \vert) \mod 2\pi.
\label{eq:per1}
\end{equation}
Since $B(y)$ is bounded and continuous,
there exists a sequence of such points on $l'_+$ converging to $0$.
More precisely, 
there exists $\alpha$ so that for each sufficiently large $k\in \nz$
there exists $r\in (\frac{1}{k+1+\theta},\frac{1}{k+\theta})$
so that $y\in l'_+$ with $|y|=r$ satisfied (\ref{eq:per1}). 
So assume that (\ref{eq:per1}) holds.
Then for any such $y\in l'_+$ the point $x:=A_0y\in m_+$ is a fixed point of $F$.
Indeed, 
$R_{\theta(y)} (y)=R_{\alpha(|y|)}(y)\in l$ by the choice of $y$ and $\alpha$.
So 
\begin{equation}
\vvert \hat F(y) \vvert =
\vert \vert A \circ R_{\theta(y)} (y) \vert \vert =\vert \vert R_{\theta(y)} (y) \vert \vert =\vert \vert y \vert \vert 
\label{eq:l'1}
\end{equation}
and
\begin{equation}
A \circ R_{\theta(y)} (y)\in l'.
\label{eq:l'2}
\end{equation}
Since $y\in l'_+$ and $l'_+$ is a smooth curve which is transversal
to the quadrangles $D_r$,  equations
(\ref{eq:l'1}) and \ref{eq:l'2}) implies that 
$\hat F(y)=A \circ R_{\theta(y)} (y)=y$.
Hence
$A_0 \circ \hat F (y)=A_0(y)$
and so  $F(x)=A_0 \circ \hat F \circ A_0^{-1}(x)=x$ for $x=A_0(y)\in m_+$.

To explain the general case of periodic points of higher periods, let us show how to
construct a periodic orbit of period three (the general case
goes similarly).  
Consider the surface
$$
\tilde P(3)=\{(a_1,a_2,a_3)\st a_i\in \rz^+ \mbox{ and }a_1a_2a_3=1\}.$$
Choose $a_1,a_2,a_3\in ( \lambda , \mu)$
on this surface $\tilde P(3)$
and a disc neighbourhood $U$ of this point in this surface.
Associate to $a=(a_1,a_2,a_3)$ the curves
$l_i$, $i=1,2,3$ through $0$ such that $\vert \vert A(z) \vert \vert =a_i \vert \vert z \vert \vert$
for all $z\in l_i$ and let $l_{i,+}$ be a component of
$l_i\setminus \{0\}$. Let $l'_{i,+}=A(l_{i,+})$. For each $r>0$, let $\alpha_i(r)$ be
the angle between the $Ol'_{i,r}$ and $Ol_{i+1,r}$ where 
$l'_{i,r}=l'_{i,+}\cap \partial D_r$ and $l_{i+1,r}=l_{i+1,+}\cap \partial D_r$ 
(and where we take $l_{3+1}=l_1$).
Assume there exists $r>0$ and $a\in \tilde P(3)$ so that 
\begin{equation}
\begin{array}{llll}
\theta(l'_{1,r}) &= &\alpha_{1}(r)  &\mod 2\pi \\
\theta(l'_{2,a_2r}) &= & \alpha_{2}(a_2 r)  &\mod 2\pi\\
\theta(l'_{3,a_2a_3r})&= & \alpha_{3}(a_2a_3 r) & \mod2\pi.
\end{array}
\label{thrc}
\end{equation}
We claim that this implies that $x:=A_0(y)$ is a periodic point of  $F$ of period three
where $y=l_{1,r}$. Indeed, because $y=l_{1,r}= l'_{1,+}\cap D_r$ 
the first equation from (\ref{thrc}) implies that $R_{\theta(y)}(y) \in l_{2,+}$
and therefore that $\vvert \hat F(y) \vvert =
\vvert  A \circ R_{\theta(y)}(y) \vvert =a_2 \vvert y \vvert =a_2 r$
and $\hat F(y)=A\circ R_{\theta(y)}(y)\in l_{2,+}'$.
This and the second equation from (\ref{thrc}) implies that 
$R_{\theta(\hat F(x))}(\hat F(y)) \in l_{3,+}$
and so $\vvert \hat F \circ \hat F(y) \vvert = \vvert A \circ R_{\theta(\hat F(x))}(\hat F(y))\vvert =
a_3 \vvert |\hat F(y) \vvert  = a_2a_3 r$. Finally, this and
the third equation from (\ref{thrc})
implies that $R_{\hat F^2(y)}(\hat F^2(y))\in l_{1,+}$ and so 
$\vert\vert  \hat F^3(y) \vert\vert = a_1a_2a_3\vert \vert y \vert \vert $ and $\hat F^3(y)\in l_{1,+}'$.
Because $a_1 a_2 a_3=1$ and both $\hat F^3(y)$ and $y$ are in $l_{1,+}'$ we get therefore that $\hat F^3(y)=y$.  
Hence $F^3(x)=A_0 \circ \hat F^3 \circ A_0^{-1}(x)=x$.

So we need to show that (\ref{thrc}) has solutions.  Define
$G\colon \rz^+ \times \tilde P(3)
\to \rz \times \rz \times \rz$ by
$$G(r,a_1,a_2,a_3)=(\alpha_{1}(r),\alpha_{2}(a_2 r),\alpha_{3}(a_2a_3 r))$$
where $y\in l'_{1,+}\cap D_r$ and $\alpha_i$ are the angles defined above.
This map is continuous and bounded. 
In fact, for each $\epsilon>0$ there exist $\delta>0$ and a neighbourhood $U$ in $\tilde P(3)$ 
as above
so that $G((0,\delta)\times U)$ is contained in  $\epsilon$-ball in $\rz^3$.
Next define the map
$$H\colon \rz^+\times \tilde P(3)\ni (r,a)
\mapsto (\theta(l'_{1,r}) , \theta(l'_{2,a_2r}) , \theta(l'_{3,a_2a_3r}) )
\in \rz\times \rz\times \rz .$$
$H$ can be written as $H_1+H_2$ where 
$$H_1(r,a)=(2\pi/r,2\pi/(a_2 r),2\pi/(a_2a_3 r))\,\, \mbox { and } \,\, 
H_2(r,a)=(B(l'_{1,r}),B(l'_{2,a_2r}),B(l'_{3,a_2a_3 r})).$$
Note that (\ref{thrc}) is equivalent to
$G(r,a)=H(r,a)$, i.e. to  $G(r,a)-H_2(r,a)=H_1(r,a)$ (modulo $2\pi$).
The values of the left-side map
$(0,\delta) \times U\ni (r,a) \mapsto G(r,a)-H_2(r,a)$ are contained in a $2\epsilon$-ball, provided 
we choose $\delta>0$ so small that $|B(y)|\le \epsilon $ for all $y$ with $\vvert y \vvert \le  \delta$.
Note that  $H_1\colon (0,\delta) \times U \to \rz^3$ is invertible
and has inverse $H_1^{-1}(y_1,y_2,y_3)=(2\pi / y_1,y_1/y_2,y_2/y_3)$.
Hence $H_1^{-1}$ maps $2\pi \underline k + [0,2\pi]^3$, $\underline k=(k_0,k_1,k_2)\in \zz^3$
into $(0,\delta)\times U$ provided $|k_i|$ is large.
So $(G-H_2)\circ H_1^{-1}$ maps 
$2\pi \underline k + [0,2\pi]^3$ into some $2\epsilon$ ball. By Brouwer's fixed point theorem,
$(G-H_2)\circ H_1^{-1}$ has a fixed point (modulo $2\pi$) 
in  $2\pi \underline k  + \underline \kappa_0+ [0,2\pi]^3$ for some $\underline \kappa_0\in \rz^3$
(where $\underline k$ is arbitrary but with $|k_i|$ large. Hence 
there exists a constant $\kappa$ so that 
for each $k_0$ large,  $G(r,a)=H(r,a)$  (modulo $2\pi$) has a solution $(r,a)$
with $r\in (\frac{1}{k_0+1+\kappa},\frac{1}{k_0+\kappa})$ and $a\in U\subset \tilde P(3)$.
\end{proofof}

\begin{proofof}{Proposition~\ref{prop:modelmap2}}
The proof of the second assertion
also has a similar flavour, but to explain the proof more clearly we will 
assume that $B(z)=0$ and that the curve $l_t$ as in (\ref{eq:excontr}) are lines.
Again write $\hat F=A^{-1}\circ F$ where $A=A^{-1}_0\circ A_1$. 
Assume that $k_0,k_1,\dots$ is a sequence as in the assumption of the proposition,
and let $\kappa_0,\kappa_1,\dots\in (0,1)$ be a sequence to be determined later on.
Take $U_0=(1/(k_0+\kappa+1),1/k_0+\kappa_0)$
and inductively choose a sequence of intervals $U_1,U_2,\dots$ so that
$U_n$ is the set of  all $a_n\in \rz$ so that 
for all $x\in U_0$ and all $a_i\in U_i$, $i=1,\dots,n-1$,
$$1/(a_1\dots a_n x) \in (k_n+\kappa_n,k_n+\kappa_n+1).$$
Next let $l_0'=\rz^+$ and for $i\ge 1$, let $l_i$ be the half-line in the positive quadrant
with $\vert \vert  Ax \vert \vert =a_i\vert \vert x\vert \vert  $ on $l_i$.
Define $l'_i=Al_i$, $i\ge 1$ and for $i=0,1,2,\dots$ let $\alpha_i$ be the angle
between $l'_i$ and $l_{i+1}$. Note that $\alpha_i$ depends on $a_i$ and $a_{i+1}$. 

We want to show that for each $n$ there exists a solution $x\in U_1$ and $a_i\in U_i$, $i=2,3,\dots$
of the system of equalities (analogous to (\ref{thrc})):
\begin{equation}
\begin{array}{rl}
2\pi/x &=\alpha_0(a_1) \mod 2\pi \\
2\pi /(a_2x) &= \alpha_1(a_1,a_2) \mod2\pi\\
& \vdots   \\
2\pi/(a_n \cdot \dots \cdot a_3a_2x) &= \alpha_{n-1}(a_{n-1},a_{n}) \mod2\pi.
\end{array}
\label{thrcc}
\end{equation}
Let us show that this is enough.  Take $x\in \l_0'=\rz^+$ with 
distance  $x$ to $0$. Assume that we have  
$\hat F^i(x)\in l_i'$ and $\vert \vert \hat F^i (x)\vert\vert  =a_1\dots a_i x$.
Then the above equations give $R_{\theta(\hat F^i(x))}(\hat F^i(x))\in l_{i+1}$,
$\vert \vert \hat F^{i+1} (x)\vert\vert  = a_{i+1} \vvert \hat F^i(x) \vvert =
\vert  a_1\cdots a_{i+1} x\vert$  for $i=0,1,\dots,n$ and $\hat F^{i+1}(x)\in l_{i+1}'$.
Since $x\in U_0$ and $a_i\in U_i$, $i=1,2,\dots$ this proves by induction that
$$\vert \vert  \hat F^{i}(x) \vert \vert  
\in (\dfrac{1}{k_i+\delta_i+1},\dfrac{1}{k_i+\delta_i}).$$
for each $i=0,1,2,\dots$.

To prove that for each $n$ there exist
$x\in U_0$ and $a_i\in U_i$  as in (\ref{thrcc}), let
$$H(x,a_2,\dots,a_n)=\left( 2\pi /x,2\pi /(a_2x),
\dots,2\pi /(a_n a_{n-1} \cdots a_2 x)\right)$$
and   $$G(x,a_2,\dots,a_n)=(\alpha_{0}(a_1),\alpha_{1}(a_1,a_2),\alpha_{2}(a_2,a_3),\dots,\alpha_{n-1}(a_{n-1},a_n)).$$
$H$ is invertible with
$$H^{-1}(y_0,\dots,y_{n-1})=(2\pi/y_0,y_1/y_0,\dots,y_{n-1}/y_{n-2}).$$
Now take $\underline k=(k_0,k_1,\dots,k_{n-1})$ 
with  $k_i\ge N_0$ and with $\lambda \le k_{i+1}/k_i\le \mu$.
For each $\epsilon>0$ there exists $N_0$ so that
$H^{-1}$ maps $2\pi \underline k+[-2\pi,4\pi]^n$ into some $\epsilon$-neighbourhood
of $(k_0,k_1/k_0,\dots,k_{n-1}/k_{n-2})$. Then $G$ maps this neighbourhood into a neighbourhood
of some point in $\rz^n$. It follows that there exists $\underline \kappa=(\kappa_0,\dots,\kappa_{n-1})\in \rz^n$
so that $G\circ H^{-1}$ maps  $2\pi \underline k+2\pi \underline \kappa + [0,2\pi]^n$ into itself, and 
therefore has a fixed point $(y_0,\dots,y_{n-1})\in 2\pi \underline k+2\pi \underline \kappa + [0,2\pi]^n$. 
It follows $G=H$ (modulo $2\pi$) has as a solution $(x,a_2,\dots,a_n)=H(y_0,\dots,y_{n-1})$ of the required form. 
\end{proofof}

\subsection{The dynamics of the composition of two jitter maps and the Proof of Theorem~\ref{thm:inforbs}}
To prove Theorem~\ref{thm:inforbs}
note that we have seen in Proposition~\ref{spiralest}
that  the first entry maps $R_0$ and $R_1$
are of the form
$$x\mapsto A_1 \circ R_{2\pi/||y||+B(y)} \circ A_0^{-1}(x)\mbox{ where }y=A_0^{-1}(x)$$
and 
$$x\mapsto  A_3 \circ R_{2\pi/||y||+B'(y)} \circ A_2^{-1}(x)\mbox{ where }y=A_2^{-1}(x).$$
The first return map to the section associated to 
$V_0$ is the third iterate of $R_1\circ R_0$ (provided we identify 
the target space of $R_1$ appropriately with the domain space of $R_0$,
as we have done in the previous subsection, see for example
Figure~\ref{fig:axis}).
To show that this map has the required properties, we proceed 
as in the proof of Proposition~\ref{prop:modelmap} and write
$$\hat F(x)= A_0^{-1} \circ A_3 \circ R_{1/||y'||+B'(y')} \circ A_2^{-1} \circ A_1 \circ R_{1/||y||+B(y)}.$$
Note that $A_i$ is a piecewise linear map (linear on each quadrant),
and so we can describe these by four parameters (which determine the position
of each corner of the quadrilaterals). 
To compute the condition analogous to (\ref{eq:excontr}), for $A_2^{-1}\circ A_1$
we take the ratio of the $i$-th term in
(\ref{eq:quadrV1R0}) to the $i$-th term in 
(\ref{eq:quadrV1R1}):
$$\frac{(2-\beta)}{(\beta+1)}, \dfrac{(2-\beta)}{(1-\beta^2)}, \frac{(2-\beta)}{(\beta+1)\beta(1-\beta)}, \frac{(2-\beta)\beta}{(1-\beta^2)}.$$
The largest one of these (the third one) is $\ge 3.5$ for all $\beta\in (0,1)$ 
whereas the last one is $\le 1$ for all $\beta\in (0,1/2)$ (it is increasing)
and the first one is $\le 1$ for all $\beta(1/2,1)$ (it is decreasing).
So $|A_2^{-1}A_1(z)|/|z|$ can vary between $1$ and $3.5$.

To compute the condition analogous to (\ref{eq:excontr}), for $A_3^{-1}\circ A_0$
we take the ratio of the $i$-th term in
(\ref{eq:quadrV0R0}) to the $i$-th term in 
(\ref{eq:quadrV2R1}):
$$\dfrac{(2-\beta-\beta^2)}{(1+2\beta)}, \dfrac{(2-\beta-\beta^2)(1+\beta)}{(1+2\beta)\beta}, 
\dfrac{\beta(11+21\beta+15\beta^2+7\beta^3)}{(2+\beta)^2(1+2\beta)}, \frac{(2-\beta-\beta^2)}{(1+2\beta)(1+\beta)}.$$
The largest of these is either the 2nd or the 3rd, and the maximum of these two is $\ge 1.2$ for all $\beta\in (0,1)$.
The third one is increasing and the last one increasing, with the
first one $<0.8$ for $\beta\in (0,0.35)$ and the last one $<0.8$ for $\beta\in (0.35,1)$.
So $|A_0^{-1}A_3(z)|/|z|$ can vary between $0.8$ and $3.5$.

So the condition corresponding to (\ref{eq:excontr}) holds. 
It follows that $\hat F$ has a sequence of fixed points (and periodic orbits) as before,
and that the properties as in Theorem~\ref{thm:inforbs} hold.

\section{Proof of Theorem~\ref{thm:robustness}.}
\label{sec:robustness}

Take 
matrices $A$ and $B$ with  $$||A-A_\beta ||,||B-B_\beta || \le \epsilon$$
where $|| \,\,\, ||$ stands for some matrix norm.

That the Shapley and anti-Shapley orbit exists for $A,B$ near $A_\beta,B_\beta$, simply follows
from the hyperbolicity of the first return map to a section transversal to these orbits.
(The first return maps are projective transformations.)

So let us discuss the persistence of the orbit $\Gamma$.
Provided $\epsilon>0$ is small enough, the set $\Sigma_A$ and $\Sigma_B$
are still divided up in three regions meeting in a Nash equilibrium as in Figure~\ref{fig:Jorbit}.
(The angles between the lines will no longer be necessarily equal and the Nash equilibria will no longer
be in the barycentre.) Now again consider the induced flow on the boundary.
The set where one or two players are indifferent are still arranged as in Figures~\ref{figs3}
and \ref{figs3gamma}  (they change continuously with $A$ and $B$). 
The part of $\partial \Sigma$ where both players are indifferent still consists of a closed curve
$\Gamma$ along which orbits spiral (along cones as in 
Figure~\ref{fig:quad}), and the other part through which orbits cross transversally.
The transition maps can be computed along this orbit as was done in 
Section~\ref{subsec:analyticcomp}, but in any case, the quadrangles computed
in that section depend again continuously on $A$ and $B$. So it follows that for fictitious
play associated to $A$ and $B$
sufficiently close to $A_\beta,B_\beta$, one still has the existence of a sequence
of periodic orbits for the flow induced on the boundary.

Next we argue for the original system. The cone over the hexagonal $\Gamma$ with apex
the Nash equilibrium $E$ is completely invariant and depends continuously on $A,B$.
Moreover, this cone is two dimensional (but of course embedded in the four-dimensional space $\Sigma$). 
So now take a half-line $l$ in this cone through the apex, and consider the first return map to $l$.
Because of the general form of the return maps, this first return map $R\colon l\to l$
is a Moebius transformation, with a fixed point at $E$. As we showed
in the appendix of Sparrow et al \cite{SSH2008}, taking $A_\beta,B_\beta$ we have the following:
for $\beta\in (0,\sigma)$ the fixed point $E$ of 
$R\colon l\to l$ is attracting, for $\beta=\sigma$ it is neutral, and for $\beta\in (\sigma,1)$
repelling and another fixed point appears which attracts all points in $l\setminus \{E\}$,
because the map is a Moebius map. (For $\beta\in (0,\sigma)$, the map $R\colon l\to l$
also has a 'virtual' 2nd fixed point,  corresponding to the 'negative' part of the half-line $l$.)
For $A,B$ close to $A_\beta,B_\beta$ the corresponding first return maps are also near
those of $A_\beta,B_\beta$. So when $\beta\ne \sigma$, and 
$A,B$ is sufficiently close to $A_\beta,B_\beta$ we have the same behaviour
for $\Gamma$.

Using Proposition~\ref{prop:implicationsigma} one can get the same conclusions
for the other periodic orbits for the original flow.

\section{Conclusion}
\label{s:concl}

For $\beta\in (-1,0]$ players always asymptotically 
become periodic.  When $\beta\in (0,\sigma)$
the Shapley orbit is still attracting but not globally attracting: 
there is an abundance of orbits (many with periodic play) as described in 
Theorem~\ref{thm:main} which tend to $E$. 
For $\beta\in (\sigma,1)$ we have chaos
(in the sense that there exist  subshifts of finite type).
This chaos is caused by a periodic orbit $\Gamma$ whose
first return map is of what we call `jitter type': orbits can move
further away and closer to the periodic orbit $\Gamma$ in a manner
which is reminiscent to that of a random walk.
Numerical simulations suggest that for $\beta\in (\sigma,\tau)$
this is precisely what happens for most starting points.
However, when $\beta\in (\tau,1)$ there exists an attracting
anti-Shapley orbit, but again this orbit is not globally attracting.

As shown in Theorem~\ref{thm:robustness} the analysis we give
does not depend on the symmetry of our matrices. The symmetric
matrices $A_\beta,B_\beta$ merely simplified our calculations, but 
using simple perturbation arguments the results also apply
to nearby maps.



Given the relationship established in Gaunersdorfer \& Hofbauer \cite{GH95}
between fictitious play and replicator dynamics, it would be interesting
to see whether chaos can also occur in the latter.\footnote{As we were about to finish
writing this paper, we received a preprint by Aguiar and Castro \cite{Aguiar08}, showing
that in two-person with replicator dynamics one can have chaotic switching
between strategies. There, the mechanism causing chaos is slightly different:
the dynamical system is smooth and chaos is caused by homoclinic cycles associated
to the 9 singularities of the system.  More precisely, some of the singularities are part
of several  distinct cycles, from which the authors are able to deduce that
there are orbits which switch between following one cycle and another.}

\section{Acknowledgements}

We would like to thank Chris Harris, who introduced us to this problem.
This paper grew out a previous joint paper with him, see Sparrow et al \cite{SSH2008}.

\section{Bibliography}

\end{document}